\numberwithin{equation}{section}
\newtheorem{theorem}[equation]{Theorem}
\newtheorem*{theorem*}{Theorem}
\newtheorem{lemma}[equation]{Lemma}
\newtheorem*{conjecture*}{Mamma Conjecture}
\newtheorem*{conjecture1*}{Mamma Conjecture (revisited)}
\newtheorem{proposition}[equation]{Proposition}
\newtheorem{corollary}[equation]{Corollary}
\newtheorem*{corollary*}{Corollary}
\theoremstyle{remark}
\newtheorem{definition}[equation]{Definition}
\newtheorem{example}[equation]{Example}
\theoremstyle{remark}
\newtheorem{remark}[equation]{Remark}
\newcommand{\cA}{{\mathcal A}}
\newcommand{\cB}{{\mathcal B}}
\newcommand{\cC}{{\mathcal C}}
\newcommand{\cD}{{\mathcal D}}
\newcommand{\cJ}{{\mathcal J}}
\newcommand{\cN}{{\mathcal N}}
\newcommand{\cO}{{\mathcal O}}
\newcommand{\cQ}{{\mathcal Q}}
\newcommand{\cS}{{\mathcal S}}
\newcommand{\cT}{{\mathcal T}}
\newcommand{\cU}{{\mathcal U}}
\newcommand{\cZ}{{\mathcal Z}}
\newcommand{\bbG}{\mathbb{G}}
\newcommand{\bbL}{\mathbb{L}}
\newcommand{\bbQ}{\mathbb{Q}}
\newcommand{\bbZ}{\mathbb{Z}}
\DeclareMathOperator{\id}{id}
\DeclareMathOperator{\NChow}{NChow} 
\DeclareMathOperator{\NNum}{NNum} 
\DeclareMathOperator{\NHom}{NHom} 
\DeclareMathOperator{\Chow}{Chow} 
\DeclareMathOperator{\Num}{Num} 
\DeclareMathOperator{\even}{even} 
\DeclareMathOperator{\odd}{odd} 
\newcommand{\dgcat}{\mathsf{dgcat}}
\newcommand{\perf}{\mathsf{perf}}
\newcommand{\dg}{\mathsf{dg}}
\newcommand{\Hom}{\mathrm{Hom}}
\newcommand{\End}{\mathrm{End}}
\newcommand{\rep}{\mathsf{rep}}
\newcommand{\Hmo}{\mathsf{Hmo}}
\newcommand{\op}{\mathsf{op}}
\newcommand{\too}{\longrightarrow}
\newcommand{\ie}{\textsl{i.e.}\ }
\title[Noncommutative motives, Tannakian structures, and Galois groups]{Noncommutative numerical motives, \\Tannakian structures, and \\ motivic Galois groups}
\author{Matilde Marcolli and Gon{\c c}alo~Tabuada}
\address{Matilde Marcolli, Mathematics Department, Mail Code 253-37, Caltech, 1200 E.~California Blvd. Pasadena, CA 91125, USA}
\email{matilde@caltech.edu}
\address{Gon{\c c}alo Tabuada, Department of Mathematics, MIT, Cambridge, MA 02139, USA}
\email{tabuada@math.mit.edu}
\subjclass[2000]{14F40, 18G55, 19D55}
\date{\today}
\keywords{Noncommutative motives, periodic cyclic homology, Tannakian formalism, motivic Galois groups}
\thanks{The first named author was supported by the NSF grants DMS-0901221 and DMS-1007207. The second named author was supported by the J.H. and E.V. Wade award.}
\begin{document}
\begin{abstract}
In this article we further the study of noncommutative numerical motives, initiated in \cite{Semi,Konts}. By exploring the change-of-coefficients mechanism, we start by improving some of the main results of \cite{Semi}. Then, making use of the notion of Schur-finiteness, we prove that the category $\NNum(k)_F$ of noncommutative numerical motives is (neutral) super-Tannakian. As in the commutative world, $\NNum(k)_F$ is {\em not} Tannakian. In order to solve this problem we promote periodic cyclic homology to a well-defined symmetric monoidal functor $\overline{HP_\ast}$ on the category of noncommutative Chow motives. This allows us to introduce the correct noncommutative analogues $C_{NC}$ and $D_{NC}$ of Grothendieck's standard conjectures $C$ and $D$. Assuming $C_{NC}$, we prove that $\NNum(k)_F$ can be made into a Tannakian category $\NNum^\dagger(k)_F$ by modifying its symmetry isomorphism constraints. By further assuming $D_{NC}$, we neutralize the Tannakian category $\Num^\dagger(k)_F$ using $\overline{HP_\ast}$. Via the (super-)Tannakian formalism, we then obtain well-defined {\em noncommutative motivic (super-)Galois groups}. Finally, making use of Deligne-Milne's theory of Tate triples, we construct explicit homomorphisms relating these new noncommutative motivic (super-)Galois groups with the classical ones.
\end{abstract}

\maketitle
\tableofcontents
\section{Introduction}\label{Intro}

\subsection*{Noncommutative motives}

Over the past two decades Bondal, Drinfeld, Kaledin, Kapranov, Kontsevich, Orlov, Van den Bergh, and others, have been promoting a broad noncommutative (algebraic) geometry program where ``geometry'' is performed directly on dg categories; consult~\cite{Kapranov,BV,BO,Drinfeld,Chitalk,Kaledin,IAS,ENS,Miami,finMot}. In this vein, Kontsevich introduced the category $\NChow(k)_F$ of {\em noncommutative Chow motives} (over a base commutative ring $k$ and with coefficients in a field $F$); see \S\ref{sub:Chow}. Recently, making use of Hochschild homology, the authors introduced the category $\NNum(k)_F$ of {\em noncommutative numerical motives}; see \S\ref{sub:Num}. Under mild assumptions on $k$ and $F$ (see Theorem~\ref{thm:semi}) the category $\NNum(k)_F$ is abelian semi-simple. The precise relation between $\NChow(k)_\bbQ$, $\NNum(k)_\bbQ$, and the classical categories of Chow and numerical motives (with rational coefficients) can be depicted as follows
\begin{equation}\label{eq:diagram}
\xymatrix@C=2em@R=1.5em{
& \Chow(k)_\bbQ  \ar[d]_-\tau \ar[dl] &&  \\
\Num(k)_\bbQ \ar[d]_-\tau & \Chow(k)_\bbQ/_{\!\!-\otimes \bbQ(1)}  \ar[dl]  \ar[rr]^-R && \ar[dl] \NChow(k)_\bbQ  \\
\Num(k)_\bbQ/_{\!\!-\otimes \bbQ(1)} \ar[rr]_{R_{\cN}} & & \NNum(k)_\bbQ & \,.
}
\end{equation}
Here, $\NChow(k)_\bbQ/_{\!\!-\otimes \bbQ(1)}$ and $\Num(k)_\bbQ/_{\!\!-\otimes \bbQ(1)}$ are the orbit categories  associated to the auto-equivalence $-\otimes \bbQ(1)$ (see Appendix~\ref{app:orbit}) and $R$ and $R_\cN$ are fully-faithful functors; consult \cite{Semi} (or the survey \cite{Survey}) for further details.

\subsection*{Motivating questions}

In the commutative world, the category $\Num(k)_F$ of numerical motives is known to be not only abelian semi-simple but also (neutral) super-Tannakian. Moreover, assuming the standard conjecture $C$ (or even the sign conjecture $C^+$), $\Num(k)_F$ can be made into a Tannakian category $\Num^\dagger(k)_F$ by modifying its symmetry isomorphism constraints; see Jannsen~\cite{Jannsen}. By further assuming the standard conjecture $D$, the classical Weil cohomologies can be used to neutralize the Tannakian category $\Num^\dagger(k)_F$; see \cite[\S6]{Andre}. As explained in Appendix~\ref{appendix}, the (super-)Tannakian formalism provides us then with well-defined motivic (super-)Galois groups $\mathrm{sGal}(\Num(k)_F)$ and $\mathrm{Gal}(\Num^\dagger(k)_F)$ encoding deep arithmetic/geometric properties of smooth projective $k$-schemes. This circle of results and conjectures in the commutative world leads us naturally to the following questions in the noncommutative world:

\smallbreak

{\bf Question I:} \textit{Is the category $\NNum(k)_F$  (neutral) super-Tannakian ?} 

{\bf  Question II:} \textit{Does the standard conjecture $C$ (or the sign conjecture $C^+$) admits a noncommutative analogue $C_{NC}$ ? Does $C_{NC}$ allows us to make $\NNum(k)_F$ into a Tannakian category $\NNum^\dagger(k)_F$ ?}

{\bf  Question III:} \textit{Does the standard conjecture $D$ admits a noncommutative analogue $D_{NC}$ ? Does $D_{NC}$ allows us to neutralize $\NNum^\dagger(k)_F$ ?}

{\bf Question IV:} \textit{Assuming that Questions I, II and III have affirmative answers, how do the noncommutative motivic (super-)Galois groups hence obtained relate with $\mathrm{sGal}(\Num(k)_F)$ and $\mathrm{Gal}(\Num^\dagger(k)_F)$   ?}

\subsection*{Statement of results}
By exploring the change-of-coefficients mechanism we start by improving the main results of \cite{Semi} concerning the semi-simplicity of the category $\NNum(k)_F$ and the relation between the commutative and the noncommutative world; see Proposition~\ref{prop:extension} and Theorem~\ref{thm:semi}. Then, making use of the notion of Schur-finiteness (see \S\ref{sec:Schur}), we answer affirmatively {\bf Question I}.
\begin{theorem}\label{thm:main1}
Assume that $F$ is a field of characteristic zero and that $k$ and $F$ are as in Theorem~\ref{thm:semi}.  Then, the category $\NNum(k)_F$ is super-Tannakian. If $F$ is moreover algebraically closed, then $\NNum(k)_F$ is neutral super-Tannakian.
\end{theorem}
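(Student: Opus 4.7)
The plan is to invoke Deligne's criterion for super-Tannakian categories, which characterizes them, among $F$-linear abelian rigid symmetric monoidal categories $\cC$ with $\End_\cC(\mathbf{1}) = F$ over a field $F$ of characteristic zero, by the property that every object of $\cC$ is annihilated by some Schur functor; when $F$ is additionally algebraically closed the resulting super-Tannakian structure is automatically neutral. Everything therefore reduces to checking the tensor-categorical setup and proving Schur-finiteness of every object of $\NNum(k)_F$.

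First I would verify the hypotheses. The $F$-linear abelian (in fact semi-simple) structure on $\NNum(k)_F$ is given by Theorem~\ref{thm:semi}. The rigid symmetric monoidal structure descends from $\NChow(k)_F$ along the quotient by the tensor ideal of numerically trivial morphisms. The equality $\End_{\NNum(k)_F}(\mathbf{1}) = F$ follows from the construction of the unit object, via $\HH_0(k) = k$ and base change from $k$ to $F$.

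The key step is Schur-finiteness. Semi-simplicity reduces the problem to simple objects, since Schur-finiteness is stable under finite direct sums and summands, and each simple object of $\NNum(k)_F$ is a direct summand of the noncommutative Chow motive associated to some smooth proper dg category $\cA$. Its categorical dimension --- the trace of the identity --- can be identified with the Euler characteristic of the Hochschild homology $\HH_\ast(\cA)$, which is a well-defined integer because $\cA$ is smooth and proper (so $\HH_\ast(\cA)$ is concentrated in finitely many degrees, each finite-dimensional over $k$). In a semi-simple $F$-linear abelian rigid symmetric tensor category with $\End(\mathbf{1}) = F$ over a field of characteristic zero, integer-valued categorical dimensions force Schur-finiteness of every object by a Kimura--O'Sullivan-type argument: one splits each object into a ``positive'' and a ``negative'' part according to the signs of the dimensions of its simple summands, and then observes that a sufficiently high exterior (resp. symmetric) power of the positive (resp. negative) part vanishes. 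Deligne's criterion then delivers the super-Tannakian structure, and neutrality over algebraically closed $F$ is automatic.

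The main obstacle I anticipate is the clean identification of the categorical trace inside $\NNum(k)_F$ with the Euler characteristic of Hochschild homology. The noncommutative numerical equivalence relation is defined via an Euler-type bilinear pairing on $\K_0$ of smooth proper dg categories, and one must carefully match this pairing with the trace form produced by the rigid tensor structure inherited from $\NChow(k)_F$. Once this bridge is in place the semi-simple machinery runs essentially formally, producing Schur-finiteness and hence the super-Tannakian conclusion.
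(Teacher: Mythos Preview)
Your overall strategy---verify the tensor-categorical hypotheses and then apply Deligne's intrinsic characterization via Schur-finiteness---is exactly the paper's strategy. The verification of the $F$-linear abelian rigid symmetric monoidal structure and of $\End_{\NNum(k)_F}({\bf 1})\simeq F$ is fine (the latter really comes from $K_0(k)_F\simeq F$, not from $\HH_0(k)$, but this is cosmetic).

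The gap is in your argument for Schur-finiteness. You claim that in a semi-simple $F$-linear abelian rigid symmetric tensor category with $\End({\bf 1})=F$, integrality of all categorical dimensions forces Kimura-finiteness by splitting each object into simple summands of positive versus negative dimension and then killing the two pieces by high exterior and symmetric powers. This step is not justified. Knowing that a simple object $S$ has $\dim(S)=d>0$ tells you that $\dim(\wedge^{d+1}S)=\binom{d}{d+1}=0$, but in a general semi-simple tensor category an object of categorical dimension zero need not be zero: $\wedge^{d+1}S$ can perfectly well decompose into simple summands of positive and negative dimension that cancel. There is no a priori $\bbZ/2$-grading on the set of simple objects compatible with tensor products that would make ``positive dimension'' mean ``even'' and force $\wedge^N(X_+)$ to consist only of positive-dimensional simples. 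The standard Kimura--O'Sullivan results go in the other direction (Schur-finite plus semi-simple implies Kimura-finite), and I do not know a theorem of the form ``semi-simple with integer dimensions implies Schur-finite.''

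The paper's route is both simpler and avoids this issue entirely. One uses that Hochschild homology furnishes an $F$-linear \emph{symmetric monoidal} functor $\overline{HH}:\NChow(k)_F\to\cD_c(K)$ (with $K=F$ or $K=k$ depending on the case). Passing to the quotient by its kernel gives a \emph{faithful} symmetric monoidal functor into $\cD_c(K)$, and $\cD_c(K)$ is manifestly Schur-finite (it is equivalent to finite-dimensional $\bbZ$-graded vector spaces). Schur-finiteness pulls back along faithful symmetric monoidal functors, so $(\NChow(k)_F/\mathrm{Ker}(\overline{HH}))^\natural$ is Schur-finite; it then pushes forward along the symmetric monoidal functor to $\NNum(k)_F$ and is stable under direct summands, so $\NNum(k)_F$ is Schur-finite. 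In case (ii) of Theorem~\ref{thm:semi} with $K\neq F$ one bootstraps via the change-of-coefficients equivalence. Notice that you already invoke $\HH_\ast(\cA)$ to compute dimensions; the point is to use $\overline{HH}$ as a tensor functor rather than merely as a source of Euler characteristics. The obstacle you anticipated---matching the categorical trace with $\chi(\HH_\ast(\cA))$---is in fact an immediate consequence of $\overline{HH}$ being symmetric monoidal, and is not where the difficulty lies.
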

Theorem~\ref{thm:main1} (with $F$ algebraically closed) combined with the super-Tannakian formalism gives rise to a super-Galois group scheme $\mathrm{sGal}(\NNum(k)_F)$, which we will call the {\em noncommutative motivic super-Galois group}. Among other consequences, $\NNum(k)_F$ is $\otimes$-equivalent to the category of finite dimensional $F$-valued super-representations of $\mathrm{sGal}(\NNum(k)_F)$.

The category $\NNum(k)_F$ is {\em not} Tannakian since the rank of each one of its objects is not necessarily a non-negative integer. In order to solve this problem, we start by promoting periodic cyclic homology to a well-defined symmetric monoidal functor $\overline{HP_\ast}$ on the category of noncommutative Chow motives; see Theorem~\ref{thm:HP}. Then, given a smooth and proper dg category $\cA$ in the sense of Kontsevich (see \S\ref{sub:smooth}), we formulate the following conjecture:

\smallbreak

{\bf Noncommutative standard conjecture $C_{NC}(\cA)$:} {\it The K{\"u}nneth projectors
$$ \pi_{\cA}^{\pm}:  \overline{HP_\ast}(\cA) \twoheadrightarrow \overline{HP_\ast}^{\pm}(\cA) \hookrightarrow \overline{HP_\ast}(\cA) $$
are algebraic, \ie they can be written as $\pi^{\pm}_{\cA}=\overline{HP_\ast}(\underline{\pi}^{\pm}_{\cA})$, with $\underline{\pi}^{\pm}_{\cA}$ noncommutative correspondences.
}

\smallbreak

As in the commutative world, the noncommutative standard conjecture $C_{NC}$ is stable under tensor products, \ie $ C_{NC}(\cA) + C_{NC}(\cB) \Rightarrow C_{NC}(\cA\otimes_k \cB)$; see Proposition~\ref{prop:stable-tensor}. Its relation with the sign conjecture $C^+$ (see \S\ref{sec:Kunneth}) is the following: given a quasi-compact and quasi-separated $k$-scheme $Z$, it is well-known that the derived category $\cD_\perf(Z)$ of perfect complexes of $\cO_Z$-modules admits a natural dg enhancement $\cD_\perf^\dg(Z)$; consult Lunts-Orlov~\cite{LO} (or \cite[Example~4.5]{CT1}). When $Z$ is moreover smooth and proper, the dg category $\cD_\perf^\dg(Z)$ is smooth and proper in the sense of Kontsevich.
\begin{theorem}\label{thm:equality}
Let $k$ and $F$ be fields of characteristic zero with $k$ a field extension of $F$. Then, $C^+(Z) \Rightarrow C_{NC}(\cD_\perf^\dg(Z))$. 
\end{theorem}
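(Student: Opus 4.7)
The plan is to transport the sign conjecture from de Rham cohomology to periodic cyclic homology via the Hochschild--Kostant--Rosenberg (HKR) identification. Recall that, for any smooth proper $k$-scheme $Z$ in characteristic zero, Feigin--Tsygan and Weibel provide a natural isomorphism
\[
HP_n(\perfdg(Z)) \,\simeq\, \bigoplus_{i\in\bbZ} H^{2i+n}_{dR}(Z/k) \qquad (n\in\bbZ/2),
\]
under which the decomposition $HP_\ast = HP_\ast^+ \oplus HP_\ast^-$ matches the $\even/\odd$ decomposition of de Rham cohomology. In particular, the Künneth projectors $\pi^\pm_{\perfdg(Z)}$ of $C_{NC}(\perfdg(Z))$ become identified with the classical projectors $\pi^\pm_Z$ onto $H^{\even/\odd}_{dR}(Z/k)$ appearing in $C^+(Z)$.

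The heart of the argument is a naturality statement: the composite
\[
\Chow(k)_F\longrightarrow \Chow(k)_F/_{\!\!-\otimes\bbQ(1)}\stackrel{R}{\longrightarrow}\NChow(k)_F\stackrel{\overline{HP_\ast}}{\longrightarrow} \bbZ/2\text{-graded vector spaces}
\]
(with $R$ as in diagram~\eqref{eq:diagram}) should agree, via the HKR isomorphism above, with the $\bbZ/2$-periodization of the de Rham realization of $\Chow(k)_F$. Equivalently, for any algebraic correspondence $\alpha$ on $Z\times Z$, the endomorphism $\overline{HP_\ast}(R(\alpha))$ of $HP_\ast(\perfdg(Z))$ should correspond, via HKR, to the classical action of $\alpha$ on $\bigoplus_n H^n_{dR}(Z/k)$ (with Tate twists collapsed by $2$-periodicity). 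I would establish this from the construction of $\overline{HP_\ast}$ in Theorem~\ref{thm:HP}, together with the functoriality of HKR under pull-backs, tensor products, and proper push-forwards in characteristic zero.

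With this compatibility in hand, the theorem follows at once. Assuming $C^+(Z)$, there exist algebraic correspondences $\pi^\pm_Z$ whose images in de Rham cohomology are the even/odd Künneth projectors. Let $\underline{\pi}^\pm_{\perfdg(Z)}$ denote the image of $\pi^\pm_Z$ under the composite $\Chow(k)_F\to\NChow(k)_F$ of diagram~\eqref{eq:diagram}; these are by construction noncommutative correspondences on $\perfdg(Z)$. The naturality statement then yields
\[
\overline{HP_\ast}\bigl(\underline{\pi}^\pm_{\perfdg(Z)}\bigr) \,=\, \pi^\pm_{\perfdg(Z)},
\]
which is precisely the algebraicity required by $C_{NC}(\perfdg(Z))$.

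The main obstacle is the middle step: one must know that $\overline{HP_\ast}\circ R$, viewed as a realization of the orbit category of $\Chow(k)_F$, coincides with the $\bbZ/2$-periodization of the de Rham realization, and in particular that the covariant action of noncommutative correspondences on $HP_\ast$ (as defined by the symmetric monoidal structure of $\NChow(k)_F$ and Theorem~\ref{thm:HP}) matches the covariant action of algebraic correspondences on de Rham cohomology. The hypothesis that $k$ is a field extension of $F$ of characteristic zero makes the relevant change of coefficients harmless, so that once this compatibility is established over $k$ (where the HKR formulas are most explicit) it descends to the $F$-linear motivic categories.
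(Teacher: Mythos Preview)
Your proposal is correct and follows essentially the same route as the paper: identify the composite $\Chow(k)_F \to \Chow(k)_F/_{\!\!-\otimes\bbQ(1)} \stackrel{R}{\to} \NChow(k)_F \stackrel{\overline{HP_\ast}}{\to} \mathrm{sVect}(k)$ with the $\bbZ/2$-periodization $s\overline{H^\ast_{dR}}$ of de Rham cohomology via Keller's comparison and the HKR theorem, and then transport the algebraic correspondences $\underline{\pi}^\pm_Z$ through this composite. The paper disposes of what you call ``the main obstacle'' by citing \cite{Ringed} and \cite{Hodge} to conclude directly that the composite \emph{functor} equals $s\overline{H^\ast_{dR}}$ (hence the action on correspondences automatically matches), rather than re-verifying functoriality of HKR under push-forward and pull-back by hand.
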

Intuitively speaking, Theorem~\ref{thm:equality} shows us that when restricted to the commutative world, the noncommutative standard conjecture $C_{NC}$ is more likely to hold than the sign conjecture $C^+$ (and therefore than the standard conjecture $C$). Hence, it answers affirmatively to the first part of {\bf Question II}. Our answer to the second part is the following:
\begin{theorem}\label{thm:main2}
Assume that $k$ is a field of characteristic zero, and that $F$ is a field extension of $k$ or vice-versa. Then, if the noncommutative standard conjecture $C_{NC}$ holds, the category $\NNum(k)_F$ can be made into a Tannakian category $\NNum^\dagger(k)_F$ by modifying its symmetry isomorphism constraints.
\end{theorem}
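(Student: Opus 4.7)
The strategy is to adapt Jannsen's modification of the symmetry constraints from the commutative setting. The algebraic K\"unneth-type projectors furnished by $C_{NC}$ produce a canonical $\mathbb{Z}/2$-grading on every noncommutative numerical motive, and a Koszul-sign twist of the symmetry on this grading will turn the (potentially negative) categorical dimensions into non-negative integers. Deligne's intrinsic criterion then delivers the Tannakian conclusion.

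First, I would extract from $C_{NC}$ orthogonal idempotents $\underline{\pi}^{\pm}_{\cA}\in\End_{\NChow(k)_F}(\cA)$ summing to $\id_{\cA}$. Their images in $\End_{\NNum(k)_F}(\cA)$ are again orthogonal idempotents, and since $\NNum(k)_F$ is pseudo-abelian (indeed abelian semi-simple by Theorem~\ref{thm:semi}) one obtains a canonical splitting $\cA=\cA^{+}\oplus\cA^{-}$. Proposition~\ref{prop:stable-tensor} ensures that the resulting $\mathbb{Z}/2$-grading is compatible with $\otimes$, and semi-simplicity then allows one to propagate it uniquely along direct summands to every object of $\NNum(k)_F$, producing a natural, tensor-compatible $\mathbb{Z}/2$-grading on the whole category.

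Next, mimicking Deligne, I would define
$$
\tilde{c}_{M,N}:=c_{M,N}\circ\bigl(\id_{M\otimes N}-2\,\underline{\pi}^{-}_{M}\otimes\underline{\pi}^{-}_{N}\bigr),
$$
so that $\tilde{c}_{M,N}$ coincides with $c_{M,N}$ except for a sign flip on the odd-odd summand. Diagrammatic checks, using that the $\underline{\pi}^{\pm}$ are central and $\otimes$-compatible, confirm that $\tilde{c}$ satisfies naturality, the hexagon, and the involution axiom; rigidity is preserved by twisting evaluation and coevaluation by the same signs. Denote the resulting rigid symmetric monoidal $F$-linear abelian category by $\NNum^{\dagger}(k)_F$. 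A direct computation of the categorical trace of $\id_{\cA}$ with respect to $\tilde{c}$ yields $\dim_F\overline{HP_\ast}^{+}(\cA)+\dim_F\overline{HP_\ast}^{-}(\cA)$, which is a non-negative integer. Since $\End(\mathbf{1})=F$ (Theorem~\ref{thm:semi}) and $\mathrm{char}(F)=0$, Deligne's intrinsic characterization of Tannakian categories over a field of characteristic zero applies, finishing the argument.

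The principal obstacle I foresee lies in the coherence verifications for $\tilde{c}$: one must ensure that the Koszul-sign twist is compatible with the associator, descends through the pseudo-abelian envelope used to construct $\NNum(k)_F$, and remains natural with respect to arbitrary morphisms (not only those arising from correspondences between smooth and proper dg categories). This is ultimately a categorical bookkeeping exercise given that the $\underline{\pi}^{\pm}$ are algebraic and $\otimes$-compatible, but it is precisely the step that uses $C_{NC}$ globally rather than object-by-object, and any failure of coherence would force one to restrict to a full subcategory rather than to all of $\NNum(k)_F$.
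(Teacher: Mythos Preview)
Your overall strategy—modify the symmetry via the algebraic K\"unneth projectors and then invoke Deligne's intrinsic criterion—is exactly the paper's approach, and your formula for $\tilde{c}_{M,N}$ and the centrality/tensor-compatibility of the $\underline{\pi}^\pm$ are correct. However, there is a genuine gap in the rank computation. You verify $\mathrm{rk}^\dagger(\cA)=\dim\overline{HP_\ast}^{+}(\cA)+\dim\overline{HP_\ast}^{-}(\cA)$ only for objects $\cA$ coming from smooth and proper dg categories, whereas Deligne's criterion demands non-negativity for \emph{every} object of $\NNum^\dagger(k)_F$, including arbitrary direct summands $(\cA,e)$. Your appeal to semi-simplicity does propagate the $\mathbb{Z}/2$-grading to all objects, but it does not by itself force $\mathrm{rk}^\dagger$ of a summand to be a non-negative integer: knowing that $\mathrm{rk}^\dagger(\cA)\in\mathbb{Z}_{\geq 0}$ says nothing about the signs of the ranks of its simple constituents. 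And you cannot simply write $\dim\overline{HP_\ast}^{\pm}(M)$ for general $M\in\NNum(k)_F$, because $\overline{HP_\ast}$ is defined on $\NChow(k)_F$, not on $\NNum(k)_F$; an idempotent $e$ in $\NNum$ need not lift to an idempotent in $\NChow$.

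The paper closes this gap by interposing the category $\NHom(k)_F:=(\NChow(k)_F/\mathrm{Ker}(\overline{HP_\ast}))^\natural$. The key technical step (Proposition~\ref{prop:aux2}) is that, under $C_{NC}$, the kernel of the surjection $\End_{\NHom(k)_F}((\cA,e))\twoheadrightarrow\End_{\NNum(k)_F}((\cA,e))$ is a \emph{nilpotent} ideal: one shows, via a trace argument using the algebraic $\underline{\pi}^\pm$, that any numerically trivial endomorphism has nilpotent image under $\overline{HP_\ast}$, hence is nilpotent in $\NHom$ by faithfulness. Idempotents therefore lift along this surjection, so $\NHom(k)_F\to\NNum(k)_F$ is essentially surjective (Proposition~\ref{prop:auxiliar}). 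Now every $N\in\NNum^\dagger(k)_F$ has a lift $\widetilde{N}\in\NHom^\dagger(k)_F$, and since $\overline{HP_\ast}$ extends to a faithful symmetric monoidal functor $\NHom^\dagger(k)_F\to\mathrm{Vect}(K)$, one obtains $\mathrm{rk}^\dagger(N)=\mathrm{rk}^\dagger(\widetilde{N})=\dim_K\overline{HP_\ast}(\widetilde{N})\geq 0$. This nilpotence/idempotent-lifting argument is precisely the ingredient your proposal is missing.
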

In order to answer {\bf Question III}, we start by observing that the $F$-linearized Grothendieck group $K_0(A)_F$ of every smooth and proper dg category $\cA$ is endowed with two well-defined equivalence relations: one associated to periodic cyclic homology $(\sim_\mathrm{hom})$ and another one associated to numerical equivalence $(\sim_ \mathrm{num})$; consult \S\ref{sec:homological} for details. This motivates the following conjecture:

\smallbreak

{\bf Noncommutative standard conjecture $D_{NC}(\cA)$:} {\it The following equality~holds
$$ K_0(\cA)_F/\!\!\sim_{\mathrm{hom}} = K_0(\cA)_F/\!\!\sim_{\mathrm{num}}\,.$$}
Its relation with the standard conjecture $D$ (see \S\ref{sec:homological}) is the following.

\begin{theorem}\label{thm:equality1}
Let $k$ and $F$ be fields of characteristic zero with $k$ a field extension of $F$. Then, $D(Z) \Rightarrow D_{NC}(\cD_\perf^\dg(Z))$.
\end{theorem}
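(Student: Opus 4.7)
My approach is to translate the noncommutative data attached to $\cA = \cD_\perf^\dg(Z)$ back to the classical side of $Z$ via HKR--type comparison theorems, and then deduce $D_{NC}$ from $D$ degree by degree. The starting point is the identification $K_0(\cD_\perf^\dg(Z)) \cong K_0(Z)$, combined with the classical Chern character $\mathrm{ch}: K_0(Z)_F \isotoo \bigoplus_i CH^i(Z)_F$, together with the Feigin--Tsygan--Weibel isomorphism $HP_0(\cD_\perf^\dg(Z)) \cong \bigoplus_i H^{2i}_{\mathrm{dR}}(Z/k)$ (and its odd-degree analogue). A crucial compatibility I would verify up front is that, when applied to $\cD_\perf^\dg(Z)$, the symmetric monoidal functor $\overline{HP_\ast}$ of Theorem~\ref{thm:HP} recovers the classical Chern character $K_0(Z)_F \to \bigoplus_i H^{2i}_{\mathrm{dR}}(Z)$ under these identifications.

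With this in hand, I would argue that the noncommutative homological equivalence on $K_0(\cD_\perf^\dg(Z))_F$ corresponds, via the Chern character isomorphism, to the direct sum of the classical homological equivalences on the $CH^i(Z)_F$ taken with respect to de Rham cohomology. Indeed, an element $\alpha$ is $\sim_{\mathrm{hom}}$--trivial in the noncommutative sense precisely when its $HP_0$-class vanishes; since the summands $H^{2i}_{\mathrm{dR}}(Z)$ sit in distinct positions of the direct sum, this is equivalent to the vanishing of each homogeneous cycle class $\mathrm{cl}(\alpha_i) \in H^{2i}_{\mathrm{dR}}(Z)$, i.e.\ to $\alpha_i \sim_{\mathrm{hom}} 0$ classically in $CH^i(Z)_F$ for every $i$.

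Next I would prove the analogous statement for numerical equivalence. By definition, $\alpha \sim_{\mathrm{num}} 0$ in $K_0(\cD_\perf^\dg(Z))_F$ iff the Euler form $\chi(\alpha,\beta) = \sum_i (-1)^i \dim_k \mathrm{Ext}^i(\alpha,\beta)$ vanishes for every $\beta$. Applying Hirzebruch--Riemann--Roch yields $\chi(\alpha,\beta) = \int_Z \mathrm{ch}(\alpha)^{\vee} \cdot \mathrm{ch}(\beta) \cdot \mathrm{td}_Z$; the Todd class $\mathrm{td}_Z$ is invertible in $\bigoplus_i CH^i(Z)_F$ (its codimension-zero component is $1$) and the involution $(-)^\vee$ acts by alternating signs on the graded pieces, so both are invertible $F$-linear endomorphisms. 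Consequently the radical of the Euler pairing on $K_0(Z)_F$ coincides with the radical of the intersection pairing on $\bigoplus_i CH^i(Z)_F$, showing that $\sim_{\mathrm{num}}$ on $K_0(\cD_\perf^\dg(Z))_F$ corresponds to the direct sum of the classical numerical equivalences on the $CH^i(Z)_F$.

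Combining these two comparisons, the hypothesis $D(Z)$ --- that $\sim_{\mathrm{hom}} = \sim_{\mathrm{num}}$ on each $CH^i(Z)_F$ --- translates directly into the equality $\sim_{\mathrm{hom}} = \sim_{\mathrm{num}}$ on $K_0(\cD_\perf^\dg(Z))_F$, which is precisely $D_{NC}(\cD_\perf^\dg(Z))$. The principal difficulty I anticipate is verifying the naturality of the HKR--Feigin--Tsygan identification of $HP_\ast$ with de Rham cohomology, together with its compatibility with the noncommutative Chern character in each homogeneous degree; this should ultimately follow from the construction of $\overline{HP_\ast}$ in Theorem~\ref{thm:HP} combined with standard results on Hochschild and periodic cyclic homology of smooth proper schemes. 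Once that compatibility is in place, the remainder is routine bookkeeping built on Hirzebruch--Riemann--Roch and the invertibility of the Todd class.
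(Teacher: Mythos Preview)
Your argument is correct and takes a genuinely different route from the paper. The paper proceeds categorically: it uses the commutative diagram~\eqref{eq:diagram-new} of Chow/numerical motives and their orbit categories, and by examining $\Hom$-sets from $\mathrm{spec}(k)$ to $Z$ across these categories obtains the square
\[
\xymatrix{
\cZ^\ast_{\mathrm{hom}}(Z)_F \ar@{->>}[d] \ar@{->>}[r] & K_0(\cD_\perf^\dg(Z))_F/\!\!\sim_{\mathrm{hom}} \ar[d] \\
\cZ^\ast_{\mathrm{num}}(Z)_F \ar[r]_-{\sim} & K_0(\cD_\perf^\dg(Z))_F/\!\!\sim_{\mathrm{num}}\,.
}
\]
The lower horizontal map is an isomorphism because the functor $R_{\cN}$ is fully faithful (a fact imported from~\cite{Semi}), and the upper horizontal map is surjective because $R$ is fully faithful together with Lemma~\ref{lem:orbit-kernel}; a diagram chase then gives the implication. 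By contrast, you bypass the orbit-category machinery entirely and instead \emph{unpack} these two horizontal comparisons by hand: the Chern character plus HKR/Feigin--Tsygan for $\sim_{\mathrm{hom}}$, and Hirzebruch--Riemann--Roch with the invertibility of $\mathrm{td}_Z$ for $\sim_{\mathrm{num}}$. Your HRR step is in effect a direct proof, in the case at hand, of exactly the full-faithfulness of $R_{\cN}$ that the paper cites as a black box. The paper's approach is more conceptual and reuses the motivic framework already built; yours is more self-contained and makes explicit that Riemann--Roch is the engine driving the numerical comparison. The compatibility you flag as the main difficulty---that $\overline{HP_\ast}$ on $\cD_\perf^\dg(Z)$ recovers the de Rham cycle class via the Chern character---is precisely what the paper establishes (and uses) in the proof of Theorem~\ref{thm:equality}, so you are right that it is available.
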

Similarly to Theorem~\ref{thm:equality}, Theorem~\ref{thm:equality1} shows us that when restricted to the commutative world, the noncommutative standard conjecture $D_{NC}$ is more likely to hold than the standard conjecture $D$. Hence, it answers affirmatively to the first part of {\bf Question III}. Our answer to the second part is the following:

\begin{theorem}\label{thm:neutral}
Let $k$ be a field of characteristic zero and $F$ a field extension of $k$. If the noncommutative standard conjectures $C_{NC}$ and $D_{NC}$ hold, then $\NNum^\dagger(k)_F$ is neutral Tannakian category. Moreover, an explicit fiber functor neutralizing $\NNum^\dagger(k)_F$ is given by periodic cyclic homology.
\end{theorem}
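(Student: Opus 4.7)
The plan is to show that $\overline{HP_\ast}$, defined on $\NChow(k)_F$ by Theorem~\ref{thm:HP}, descends along the quotient $\NChow(k)_F \to \NNum(k)_F$ and is compatible with the symmetry twist producing $\NNum^\dagger(k)_F$, so as to yield a faithful exact $F$-linear symmetric monoidal functor $\overline{HP_\ast}\colon \NNum^\dagger(k)_F \to \mathrm{Vec}(F)$; this is then the desired neutral fiber functor. For the descent step, $\overline{HP_\ast}$ takes values in the category $\mathrm{sVec}(F)$ of $\bbZ/2$-graded finite-dimensional $F$-vector spaces, the finite-dimensionality of $HP_\ast(\cA)$ being a standard consequence of the smoothness and properness of $\cA$. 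Morphisms in $\NChow(k)_F$ are $K_0$-classes of bimodule categories, and numerical equivalence on $\NChow$ translates into the numerical equivalence on those $K_0$ introduced in Section~\ref{sec:homological}; by $D_{NC}$ this agrees with homological equivalence, i.e.\ with the kernel of the map induced by $\overline{HP_\ast}$, so $\overline{HP_\ast}$ factors through $\NNum(k)_F$.

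For the symmetry twist, recall from Theorem~\ref{thm:main2} that $\NNum^\dagger(k)_F$ is obtained from $\NNum(k)_F$ by modifying the symmetry isomorphism constraints using the algebraic Künneth projectors $\underline{\pi}^{\pm}_{\cA}$ supplied by $C_{NC}$. By the very definition of $C_{NC}$, these are sent under $\overline{HP_\ast}$ to the canonical projections $\pi^{\pm}_{\cA}$ onto the even and odd parts of $\overline{HP_\ast}(\cA)$, which are precisely the data implementing the Koszul sign rule on $\mathrm{sVec}(F)$; the twist on the source therefore cancels the super-symmetry on the target. Forgetting the $\bbZ/2$-grading on the target then makes $\overline{HP_\ast}\colon \NNum^\dagger(k)_F \to \mathrm{Vec}(F)$ into an $F$-linear symmetric monoidal functor in the ordinary (non-super) sense.

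Finally, for the fiber functor axioms, $\NNum^\dagger(k)_F$ is abelian semi-simple and Tannakian by Theorems~\ref{thm:semi} and~\ref{thm:main2}, so $\overline{HP_\ast}$ is automatically exact and every non-zero object has a positive integer rank in $F$. If $\overline{HP_\ast}(X)=0$ for some non-zero $X$, then applying $\overline{HP_\ast}$ to the rank of $X$ (the trace of $\id_X$) and using that $\overline{HP_\ast}$ restricts to the identity on $\End(\mathbf{1})=F$ yields $\mathrm{rk}(X)=\dim_F \overline{HP_\ast}(X)=0$, contradicting positivity; faithfulness on morphisms then follows from semi-simplicity. The main obstacle lies in the symmetry-twist compatibility above: it hinges on the strong monoidality of $\overline{HP_\ast}$, namely a natural Künneth-type isomorphism $\overline{HP_\ast}(\cA \otimes \cB) \simeq \overline{HP_\ast}(\cA) \otimes \overline{HP_\ast}(\cB)$ intertwining $\overline{HP_\ast}(\underline{\pi}^{\pm}_{\cA})$ with the obvious even/odd projections on the right-hand side, together with the verification that this isomorphism converts the source twist into the Koszul sign rule; once this is in place, the remaining steps are essentially formal.
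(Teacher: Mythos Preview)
Your proposal is correct and follows essentially the same route as the paper. Both arguments use $D_{NC}$ to identify $\mathrm{Ker}(\overline{HP_\ast})$ with the numerical $\otimes$-ideal $\cN$ (via the adjunction $\Hom_{\NChow}(\cA,\cB)\simeq K_0(\cA^\op\otimes_k\cB)_F$), then use $C_{NC}$ and the twist of Proposition~\ref{prop:new} to turn $\overline{HP_\ast}$ into a symmetric monoidal functor to $\mathrm{Vect}(F)$.

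The differences are organizational. The paper packages the descent step as the statement that the canonical functor $\NHom(k)_F\to\NNum(k)_F$ is an \emph{equivalence} (Proposition~\ref{prop:new-new}); faithfulness of the resulting fiber functor is then automatic, since $\overline{HP_\ast}$ is faithful on $\NHom(k)_F$ by construction. You instead argue faithfulness directly via rank positivity and semi-simplicity, which is a valid alternative. One remark: the ``main obstacle'' you flag---strong monoidality of $\overline{HP_\ast}$ and compatibility of the K\"unneth isomorphism with the twist---is not an obstacle at this point in the paper. The K\"unneth isomorphism is exactly Theorem~\ref{thm:HP}, and the compatibility with the twist is the content of the general Proposition~\ref{prop:new} (applied there with $H=\overline{HP_\ast}$); both are already established before Theorem~\ref{thm:neutral} is stated, so the proof really is as formal as the paper's three-line argument suggests.
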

Theorem~\ref{thm:neutral} combined with the Tannakian formalism gives rise to a Galois group scheme $\mathrm{Gal}(\NNum^\dagger(k)_F)$, which we will call the {\em noncommutative motivic Galois group}. Since by Theorem~\ref{thm:semi} the category $\NNum^\dagger(k)$ is not only abelian but moreover semi-simple, the Galois group $\mathrm{Gal}(\NNum^\dagger(k)_F)$ is {\em pro-reductive}, \ie its unipotent radical is trivial; see \cite[\S2.3.2]{Andre}. Similarly to the super-Tannakian case, the category $\NNum^\dagger(k)_F$ is $\otimes$-equivalent to the category of finite dimensional $F$-valued representations of $\mathrm{Gal}(\NNum^\dagger(k)_F)$.

Finally, making use of all the above results as well as of the Deligne-Milne's theory of Tate triples, we answer {\bf Question IV} as follows:
\begin{theorem}\label{thm:main3}
Let $k$ be a field of characteristic zero. If the standard conjectures $C$ and $D$ as well as the noncommutative standard conjectures $C_{NC}$ and $D_{NC}$ hold, then there exist well-defined surjective group homomorphisms
\begin{equation}\label{eq:surj1}
\mathrm{sGal}(\NNum(k)_k) \twoheadrightarrow \mathrm{Ker}(t: \mathrm{sGal}(\Num(k)_k) \twoheadrightarrow \bbG_m)
\end{equation}
\begin{equation}\label{eq:surj2}
\mathrm{Gal}(\NNum^\dagger(k)_k) \twoheadrightarrow \mathrm{Ker}(t: \mathrm{Gal}(\Num^\dagger(k)_k) \twoheadrightarrow \bbG_m)\,.
\end{equation}
Here, $\bbG_m$ denotes the multiplicative group, $t$ is induced by the inclusion of the category of Tate motives, and the (super-)Galois groups are computed with respect to periodic cyclic homology and de Rham cohomology. 
\end{theorem}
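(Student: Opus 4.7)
The plan is to derive both surjections from a single Tannakian source: the fully faithful functor $R_\cN: \Num(k)_k/{-\otimes \bbQ(1)} \to \NNum(k)_k$ from diagram~\eqref{eq:diagram}. The general principle is that a fully faithful exact $\otimes$-functor between neutral (super-)Tannakian categories, compatible with fiber functors and with image closed under subobjects, induces a faithfully flat (hence surjective) homomorphism of the associated (super-)Galois group schemes in the opposite direction. The subtlety is that the target of $R_\cN$ is a noncommutative category, while its source is the orbit category of the commutative numerical motives, so one must first interpret the orbit quotient Galois-theoretically.

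The tool for this is Deligne--Milne's theory of Tate triples. Assuming $C$ (which forces $C^+$), the results of Jannsen reviewed in Appendix~\ref{appendix} make $\Num^\dagger(k)_k$ Tannakian and, together with $D$, make de Rham cohomology into a fiber functor neutralizing it; the Tate object $\bbQ(1)$ together with the weight grading turns this into a Tate triple, yielding a short exact sequence
\[
1 \to \mathrm{Ker}(t) \to \mathrm{Gal}(\Num^\dagger(k)_k) \to \bbG_m \to 1,
\]
and analogously in the super setting. The key identification to check, using Deligne--Milne's description of the quotient of a Tate triple by its Tate object, is that $\mathrm{Ker}(t)$ is precisely the Tannakian Galois group of the orbit category $\Num^\dagger(k)_k/{-\otimes\bbQ(1)}$ (and likewise $\mathrm{Ker}(t)$ in the super case is the super-Galois group of $\Num(k)_k/{-\otimes\bbQ(1)}$), neutralized by the fiber functor induced by de Rham cohomology on the orbit category. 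Under the assumptions $C_{NC}$ and $D_{NC}$, Theorems~\ref{thm:main1}, \ref{thm:main2} and~\ref{thm:neutral} put matching (super-)Tannakian structures on $\NNum(k)_k$ and $\NNum^\dagger(k)_k$, neutralized by $\overline{HP_\ast}$.

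It then remains to apply $R_\cN$. Full faithfulness of $R_\cN$ is already recorded after diagram~\eqref{eq:diagram}; what must be added is compatibility of $R_\cN$ with the modified symmetry constraints used to Tannakianize both sides (so that $R_\cN$ promotes to $\Num^\dagger(k)_k/{-\otimes\bbQ(1)} \to \NNum^\dagger(k)_k$) and with the fiber functors. Compatibility with fiber functors is where Hochschild--Kostant--Rosenberg enters, identifying $\overline{HP_\ast}\circ R$ with (the $\bbZ/2$-collapse of) de Rham cohomology on smooth projective $k$-schemes, and this identification is exactly what the passage to the orbit category makes canonical, since the $\bbQ(1)$-twists are collapsed. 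Once these compatibilities are in place, Tannakian duality applied to $R_\cN$ yields~\eqref{eq:surj2}, and the parallel argument without the modification of constraints, using Theorem~\ref{thm:main1} and the super-Tannakian formalism, yields~\eqref{eq:surj1}. The main obstacle I anticipate is the precise verification that $R_\cN$ respects the modified symmetry constraints (these depend on the K\"unneth projectors, whose algebraicity is supplied by $C_{NC}$ on the noncommutative side and by $C$ on the commutative side), and that its essential image is closed under subobjects in the (super-)Tannakian sense; both hinge on translating algebraicity of projectors across the functor $R$, which is the content of Theorem~\ref{thm:equality} interpreted at the level of motivic correspondences.
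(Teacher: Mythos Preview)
Your proposal is essentially correct and follows the same architecture as the paper: identify $\mathrm{Ker}(t)$ with the (super-)Galois group of the orbit category $\Num^{(\dagger)}(k)_k/_{-\otimes\bbQ(1)}$ via Deligne--Milne's Tate-triple formalism (Propositions~\ref{prop:technical1} and~\ref{sTate3prop}), then use the fully faithful functor $R_\cN$ to obtain the surjection onto this kernel; the HKR-based identification of $\overline{HP_\ast}\circ R$ with $s\overline{H^\ast_{dR}}$ and the compatibility of the $\dagger$-modifications across $R_\cN$ (handled via Proposition~\ref{prop:new} and Lemma~\ref{lem:new-technical}) are exactly the technical points the paper isolates. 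One minor remark: your worry about the essential image being closed under subobjects is automatically resolved here, since both $\Num(k)_k$ and $\NNum(k)_k$ are abelian \emph{semi-simple} (Theorem~\ref{thm:semi}), so every subobject is a direct summand and full faithfulness suffices.
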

Theorem~\ref{thm:main3} was sketched by Kontsevich in \cite{IAS}. Intuitively speaking, it shows us that the $\otimes$-symmetries of the commutative world which can be lifted to the noncommutative world are precisely those that become trivial when restricted to Tate motives. The difficulty of replacing $k$ by a more general field of coefficients $F$ relies on the lack of an appropriate ``noncommutative \'etale/Betti cohomology''.

In Appendix~\ref{appendix} we collect the main results of the (super-)Tannakian formalism and in Appendix~\ref{app:orbit} (which is of independent interest) we recall the notion of orbit category and describe its behavior with respect to four distinct operations.

\medbreak

\noindent\textbf{Acknowledgments:} The authors are very grateful Eric Friedlander, Dmitry Kaledin and Yuri Manin for useful discussions.

\medbreak

\noindent\textbf{Conventions:} Throughout the article, we will reserve the letter $k$ for the base commutative ring and the letter $F$ for the field of coefficients. The pseudo-abelian envelope construction will be denoted by $(-)^\natural$. 

\section{Differential graded categories}\label{sec:dg}
In this section we collect the notions and results concerning dg categories which are used throughout the article. For further details we invite the reader to consult Keller's ICM address~\cite{ICM}. Let $\cC(k)$ the category of (unbounded) cochain complexes of $k$-modules. A {\em differential graded (=dg) category $\cA$} is a category enriched over $\cC(k)$. Concretely, the morphisms sets $\cA(x,y)$ are complexes of $k$-modules and the composition operation fulfills the Leibniz rule: $d(f\circ g)=d(f)\circ g+(-1)^{\textrm{deg}(f)}f\circ d(g)$. A {\em dg functor} is a functor which preserves the differential graded structure. The category of small dg categories will be denoted by $\dgcat(k)$.

Let $\cA$ be a (fixed) dg category. Its {\em opposite} dg category $\cA^\op$ has the same objects and complexes of morphisms given by $\cA^\op(x,y):=\cA(y,x)$. A {\em right dg $\cA$-module} (or simply an $\cA$-module) is a dg functor $\cA^\op \to \cC_\dg(k)$ with values in the dg category of complexes of $k$-modules. We will denote by $\cC(\cA)$ the category of $\cA$-modules and by $\cD(\cA)$ the {\em derived category} of $\cA$, \ie the localization of $\cC(\cA)$ with respect to the class of quasi-isomorphisms; consult \cite[\S3]{ICM} for details. The full triangulated subcategory of $\cD(\cA)$ formed by its {\em compact objects} \cite[Def.~4.2.7]{Neeman} will be denoted by $\cD_c(\cA)$.

As proved in \cite[Thm.~5.3]{IMRN}, the category $\dgcat(k)$ carries a Quillen model structure whose weak equivalence are the {\em derived Morita equivalences}, \ie the dg functors $\cA \to \cB$ which induce an equivalence $\cD(\cA) \stackrel{\sim}{\to} \cD(\cB)$ on the associated derived categories. The homotopy category hence obtained will be denoted by $\Hmo(k)$. 

The tensor product of $k$-algebras extends naturally to dg categories, giving rise to a symmetric monoidal structure $-\otimes_k-$ on $\dgcat(k)$. The $\otimes$-unit is the dg category $\underline{k}$ with one object and with $k$ as the dg algebra of endomorphisms (concentrated in degree zero). As explained in \cite[\S4.3]{ICM}, the tensor product of dg categories can be naturally derived $-\otimes^\bbL-$ giving thus rise to a symmetric monoidal structure on $\Hmo(k)$. 

Let $\cA$ and $\cB$ be two dg categories. A {\em $\cA\text{-}\cB$-bimodule} is a dg functor $\cA\otimes^\bbL_k \cB^\op \to \cC_\dg(k)$, or in other words a $(\cA^\op \otimes_k^\bbL \cB)$-module. Let $\rep(\cA,\cB)$ be the full triangulated subcategory of $\cD(\cA^\op \otimes_k^\bbL \cB)$ spanned by the (cofibrant) $\cA\text{-}\cB$-bimodules $X$ such that for every object $x \in \cA$ the associated $\cB$-module $X(x,-)$ belongs to $\cD_c(\cB)$.
\subsection{Smooth and proper dg categories}\label{sub:smooth}
Following Kontsevich~\cite{IAS,ENS}, a dg category $\cA$ is called {\em smooth} if the $\cA\text{-}\cA$-bimodule
\begin{eqnarray*}
\cA(-,-): \cA \otimes_k^\bbL \cA^\op \to \cC_\dg(k) && (x,y) \mapsto \cA(x,y)
\end{eqnarray*}
belongs to $\cD_c(\cA^\op \otimes_k^\bbL \cA)$. It is called {\em proper} if for each ordered pair of objects $(x,y)$, the complex $\cA(x,y)$ of $k$-modules belongs to $\cD_c(k)$. As proved in \cite[Thm.~4.8]{CT1}, the smooth and proper dg categories can be conceptually characterized as being precisely the dualizable (or rigid) objects of the symmetric monoidal category $\Hmo(k)$. 
\section{Noncommutative pure motives}
In this section we recall the construction of the categories of noncommutative Chow and numerical motives.
\subsection{Noncommutative Chow motives}\label{sub:Chow}
The rigid symmetric monoidal category $\NChow(k)_F$ of {\em noncommutative Chow motives} (over a base ring $k$ and with coefficients in a field $F$) was proposed by Kontsevich in \cite{IAS} and developed in full detail in \cite{CvsNC, IMRN}. It is defined as the pseudo-abelian envelope of the category:
\begin{itemize}
\item[(i)] whose objects are the smooth and proper dg categories;
\item[(ii)]  whose morphisms from $\cA$ to $\cB$ are given by the $F$-linearized Grothendieck group $K_0(\cA^\op \otimes_k^\bbL \cB)_F$;
\item[(iii)] whose composition law is induced by the (derived) tensor product of bimodules.
\end{itemize}
Its symmetric monoidal structure is induced by the (derived) tensor product of dg categories. In analogy with the commutative world, the morphisms of $\NChow(k)_F$ will be called {\em noncommutative correspondences}. By definition a noncommutative Chow motive consists then of a pair $(\cA,e)$, where $\cA$ a smooth and proper dg category and $e$ an idempotent of the $F$-algebra $K_0(\cA^\op \otimes_k^\bbL \cA)_F$. When $e=[\cA(-,-)]$, we will simply write $\cA$ instead of $(\cA,[\cA(-,-)])$. 
\subsection{Noncommutative numerical motives}\label{sub:Num}
The rigid symmetric monoidal category $\NNum(k)_F$ of {\em noncommutative numerical motives} (over a base ring $k$ and with coefficients in a field $F$) was constructed\footnote{Kontsevich had previously introduced a category $\mathrm{NC}_{\mathrm{num}}(k)_F$ of noncommutative numerical motives; see~\cite{IAS}. However, the authors have recently proved that $\mathrm{NC}_{\mathrm{num}}(k)_F$ and $\NNum(k)_F$ are in fact $\otimes$-equivalent; see~\cite{Konts}.} by the authors in \cite{Semi}. Let $(\cA,e)$ and $(\cB,e')$ be two noncommutative Chow motives and $\underline{X}=(e \circ [\sum_i a_iX_i]\circ e')$ and $\underline{Y}=(e' \circ [\sum_j b_jY_j]\circ e)$ 
two noncommutative correspondences. Recall that $X_i$ is a $\cA\text{-}\cB$-bimodule, that $Y_j$ is a $\cB\text{-}\cA$-bimodule, and that the sums are indexed by a finite set. The {\em intersection number} $ \langle \underline{X} \cdot \underline{Y} \rangle$ of $\underline{X}$ with $\underline{Y}$ is given by the following formula
\begin{equation*}
\sum_{i,j} a_i \cdot b_j \cdot [HH(\cA;X_i \otimes^\bbL_\cB Y_j)] \in K_0(k)_F\,,
\end{equation*}
where $[HH(\cA;X_i\otimes^\bbL_\cB Y_j)]$ denotes the class in $K_0(k)_F$ of the Hochschild homology complex of $\cA$ with coefficients in the $\cA\text{-}\cA$-bimodule $X_i \otimes^\bbL_\cB Y_j$. A noncommutative correspondence $\underline{X}$ is called {\em numerically equivalent to zero} if for every noncommutative correspondence $\underline{Y}$ the intersection number $\langle \underline{X} \cdot \underline{Y}\rangle$ is zero. As proved in \cite[Thm.~1.5]{Semi}, the noncommutative correspondences which are numerically equivalent to zero form a $\otimes$-ideal $\cN$ of the category $\NChow(k)_F$. Moreover, $\cN$ is the largest $\otimes$-ideal of $\NChow(k)_F$ (distinct from the entire category). The category of noncommutative numerical motives $\NNum(k)_F$ is then by definition the pseudo-abelian envelope of the quotient category $\NChow(k)_F/\cN$.

\section{Change of coefficients}\label{sec:coefficients}
In this section, we explore the change-of-coefficients mechanism. This will allows us to improve the main results of \cite{Semi} concerning the semi-simplicity of the category $\NNum(k)_F$ and the relation between the commutative and the noncommutative world; see Proposition~\ref{prop:extension} and Theorem~\ref{thm:semi}. In what follows, $F$ will be a field, $K$ a field extension of $F$, and $(\cC,\otimes, {\bf 1})$ a $F$-linear, additive, rigid symmetric monoidal category such that $\End_\cC({\bf 1})\simeq F$. This data allows us to consider a new category $\cC\otimes_FK$. It has the same objects as $\cC$ and morphisms given by
$$ \Hom_{\cC\otimes_FK}(X,Y):= \Hom_\cC(X,Y) \otimes_F K\,.$$
By construction, it is $K$-linear, additive, rigid symmetric monoidal, and such that $\End_{\cC\otimes_FK}({\bf 1})\simeq F\otimes_FK \simeq K$. Moreover, it comes equipped with a canonical $F$-linear symmetric monoidal functor $\cC \to \cC\otimes_F K$. 
\subsection*{Quotient categories}
Recall from \cite[Lemma~7.1.1]{AK} that the formula
$$ \cN_\cC(X,Y):= \{ f \in \Hom_\cC(X,Y) \, |\, \forall g \in \Hom_\cC(Y,X), \, \mathrm{tr}(g\circ f)=0\}$$
defines a $\otimes$-ideal $\cN_\cC$ of $\cC$, where $\mathrm{tr}$ stands for the categorical trace. Moreover, $\cN_\cC$ can be characterized as the largest $\otimes$-ideal of $\cC$ (distinct from the entire category); see \cite[Prop.~7.1.4]{AK}. As proved in \cite[Prop.~1.4.1]{Bruguieres}, the change-of-coefficients mechanism is well-behaved with respect to this $\otimes$-ideal, \ie the canonical functor
\begin{equation}\label{eq:Brug}
(\cC/\cN_\cC) \otimes_F K \stackrel{\sim}{\too} (\cC \otimes_F K)/\cN_{\cC\otimes_FK}
\end{equation}
is an additive $\otimes$-equivalence of categories.
\subsection*{Motives versus noncommutative motives}
As explained in \S\ref{sub:Chow}, the category $\NChow(k)_F$ of noncommutative Chow motives is $F$-linear, additive, and rigid symmetric monoidal. When $k$ is furthermore local (or more generally when $K_0(k)=\bbZ$), we have
$$ \End_{\NChow(k)_F}(\underline{k})\simeq K_0(\underline{k}^\op \otimes_k^\bbL \underline{k})_F \simeq K_0(k)_F \simeq F\,.$$
By construction we then observe that the canonical functor
\begin{equation}\label{eq:Chow}
(\NChow(k)_F \otimes_FK)^\natural \stackrel{\sim}{\too} \NChow(k)_K
\end{equation}
is an additive $\otimes$-equivalence of categories. By combining \eqref{eq:Brug} with \eqref{eq:Chow}, we then obtain the following additive $\otimes$-equivalence of categories
\begin{equation}\label{eq:Numscalars}
(\NNum(k)_F \otimes_FK)^\natural \stackrel{\sim}{\too} \NNum(k)_K\,.
\end{equation}
\begin{proposition}\label{prop:extension}
Let $k$ and $F$ be fields with $F$ of characteristic zero. Then, by applying the change-of-coefficients construction $-\otimes_\bbQ F$ to \eqref{eq:diagram} we obtain the following diagram of $F$-linear, additive, symmetric monoidal functors
\begin{equation}\label{eq:diagram-new}
\xymatrix@C=2em@R=1.5em{
& \Chow(k)_F  \ar[d]_-{\tau} \ar[dl] &&  \\
\Num(k)_F \ar[d]_-{\tau} & \Chow(k)_F/_{\!\!-\otimes \bbQ(1)}  \ar[dl]  \ar[rr]^-R && \ar[dl] \NChow(k)_F  \\
\Num(k)_F/_{\!\!-\otimes \bbQ(1)} \ar[rr]_{R_{\cN}} & & \NNum(k)_F & \,,
}
\end{equation}
where the functors $\tau$ are faithful and the functors $R$ and $R_\cN$ fully-faithful.
\end{proposition}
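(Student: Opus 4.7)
The plan is to apply the change-of-coefficients operation $-\otimes_\bbQ F$ termwise to diagram~\eqref{eq:diagram}, to identify the result with~\eqref{eq:diagram-new} via natural $\otimes$-equivalences, and then to transfer the base-case faithfulness/full-faithfulness statements from $\bbQ$-coefficients to $F$-coefficients via the flatness of $F/\bbQ$.

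First I would identify each vertex. The noncommutative corner is already handled by~\eqref{eq:Chow} and~\eqref{eq:Numscalars}. For the commutative corner, the same reasoning applies verbatim: since $k$ is a field one has $\End_{\Chow(k)_\bbQ}({\bf 1}) \simeq \bbQ$, so combining~\eqref{eq:Brug} with the pseudo-abelian envelope construction of numerical motives yields additive $\otimes$-equivalences $(\Chow(k)_\bbQ \otimes_\bbQ F)^\natural \isotoo \Chow(k)_F$ and $(\Num(k)_\bbQ \otimes_\bbQ F)^\natural \isotoo \Num(k)_F$. For the orbit categories, a direct inspection of the defining formula $\Hom_{\cC/\sigma}(X,Y) = \bigoplus_{n \in \bbZ} \Hom_\cC(X, \sigma^n Y)$ shows that the construction commutes with change of coefficients, giving an $F$-linear $\otimes$-equivalence $(\cC/_{-\otimes \bbQ(1)}) \otimes_\bbQ F \simeq (\cC \otimes_\bbQ F)/_{-\otimes \bbQ(1)}$; this compatibility will be developed in Appendix~\ref{app:orbit}.

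Next, I would observe that $-\otimes_\bbQ F$ preserves faithful and fully-faithful additive functors. Indeed, since $F/\bbQ$ is a field extension, and in particular flat, the functor $-\otimes_\bbQ F$ on $\bbQ$-vector spaces preserves both injections and surjections. Applied to the Hom-maps of a $\bbQ$-linear functor $\Phi$, this shows that $\Phi \otimes_\bbQ F$ inherits faithfulness (resp.~fully-faithfulness) from $\Phi$. The subsequent passage to the pseudo-abelian envelope preserves these properties on the original objects via the standard description $\Hom_{\cC^\natural}((X,e),(Y,f)) = f \circ \Hom_\cC(X,Y) \circ e$.

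Finally, I would invoke the base case over $\bbQ$. The arrows labeled $\tau$ in~\eqref{eq:diagram} are the canonical projections from a category to its orbit category, and these are faithful by construction: the Hom-map is the inclusion of the $n=0$ summand in $\bigoplus_{n \in \bbZ} \Hom_\cC(X, \sigma^n Y)$. The functors $R$ and $R_\cN$ were shown to be fully-faithful in~\cite{Semi}. Combining these base-case statements with the preservation result above yields diagram~\eqref{eq:diagram-new} with the asserted properties. The only nontrivial technical point is the compatibility between orbit categories and change of coefficients, and this is precisely the subject of Appendix~\ref{app:orbit}.
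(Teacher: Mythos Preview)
Your proposal is correct and follows essentially the same route as the paper: identify each vertex of the diagram via change-of-coefficients $\otimes$-equivalences (the paper cites \cite[\S4.2.2]{Andre} for the commutative side and uses \eqref{eq:Chow}--\eqref{eq:Numscalars} for the noncommutative side), invoke the compatibility of orbit categories with $-\otimes_\bbQ F$ and with pseudo-abelian envelopes from Appendix~\ref{app:orbit} (Lemmas~\ref{lem:orbit} and~\ref{lem:orbit-pseudo}), and then transfer the base-case properties. You are slightly more explicit than the paper in spelling out the flatness argument for why $-\otimes_\bbQ F$ preserves faithfulness and full-faithfulness, but the underlying strategy is identical.
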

\begin{proof}
As explained in \cite[\S4.2.2]{Andre} we have $\otimes$-equivalences
\begin{eqnarray*}
(\Chow(k)_\bbQ \otimes_\bbQ F)^\natural \stackrel{\sim}{\too} \Chow(k)_F && (\Num(k)_\bbQ \otimes_\bbQ F)^\natural \stackrel{\sim}{\too} \Num(k)_F\,.
\end{eqnarray*}
By combining Lemmas~\ref{lem:orbit} and \ref{lem:orbit-pseudo} we obtain induced fully-faithful functors
$$ (\Chow(k)_\bbQ \otimes_\bbQ F)^\natural \too (\Chow(k)_\bbQ\otimes_\bbQ F)^\natural /_{\!\!-\otimes \bbQ(1)} \too ((\Chow(k)_\bbQ/_{\!\!-\otimes \bbQ(1)})\otimes_\bbQ F)^\natural$$
$$ (\Num(k)_\bbQ \otimes_\bbQ F)^\natural \too (\Num(k)_\bbQ\otimes_\bbQ F)^\natural /_{\!\!-\otimes \bbQ(1)} \too ((\Num(k)_\bbQ/_{\!\!-\otimes \bbQ(1)})\otimes_\bbQ F)^\natural\,.$$
Hence, the proof follows from the above $\otimes$-equivalences \eqref{eq:Chow} and \eqref{eq:Numscalars}.
\end{proof}
\subsection*{Semi-simplicity}
In this subsection we switch the role of $F$ and $K$, \ie we assume that $F$ is a field extension of $K$.
\begin{theorem}\label{thm:semi}
Assume one of the following two conditions:
\begin{itemize}
\item[(i)] The base ring $k$  is local (or more generally we have $K_0(k)=\bbZ$) and $F$ is a $k$-algebra; a large class of examples is given by taking $k=\bbZ$ and $F$ an arbitrary field.
\item[(ii)] The base ring $k$ and the field $F$ are two field extensions of a (non-trivial) field $K$; a large classe of examples is given by taking $K=F=\bbQ$ and $k$ a field of characteristic zero, or $K=\bbQ$, $F=\overline{\bbQ}$, and $k$ a field of characteristic zero.
\end{itemize}
Then, the category $\NNum(k)_F$ is abelian semi-simple. Moreover, if $\cJ$ is a $\otimes$-ideal in $\NChow(k)_F$ for which the pseudo-abelian envelope of the quotient category $\NChow(k)_F/\cJ$ is abelian semi-simple, then $\cJ$ agrees with $\cN$.
\end{theorem}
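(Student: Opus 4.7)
The plan is to treat case~(i) as essentially known from \cite{Semi} and to bootstrap case~(ii) from case~(i) through the change-of-coefficients machinery developed in this section; the uniqueness of $\cN$ will then follow formally from the Andr\'e-Kahn characterization of $\cN$ as the largest proper $\otimes$-ideal.

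For case~(i), I would simply appeal to \cite[Thm.~1.9]{Semi}: the hypothesis $K_0(k)=\bbZ$ ensures $\End_{\NChow(k)_F}(\underline{k}) \simeq F$, and the proof of \emph{loc.\,cit.} establishes, via a Hochschild-homology-based intersection pairing, that every endomorphism $F$-algebra of $\NChow(k)_F$ modulo $\cN$ is finite-dimensional and semi-primitive; the standard Jannsen--Andr\'e-Kahn criterion then yields that $\NNum(k)_F$ is abelian semi-simple.

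For case~(ii), I would reduce to case~(i) by enlarging $F$ to a common extension of $k$ and $F$. Pick a maximal ideal $\mathfrak{m}$ of the nonzero $K$-algebra $k\otimes_K F$ and set $L:=(k\otimes_K F)/\mathfrak{m}$; then $L$ is a field containing both $k$ and $F$ as subfields. Since $L$ is in particular a field extension of $k$, case~(i) applies and $\NNum(k)_L$ is abelian semi-simple. The equivalence \eqref{eq:Numscalars} identifies $\NNum(k)_L$ with $(\NNum(k)_F\otimes_F L)^\natural$, so for every object $M$ of $\NNum(k)_F$ one obtains an isomorphism of $L$-algebras $\End_{\NNum(k)_F}(M)\otimes_F L \simeq \End_{\NNum(k)_L}(M)$. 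The right-hand side is finite-dimensional semi-simple over $L$, and a short descent argument (finite-dimensionality descends, and for a finite-dimensional $F$-algebra $A$ the Jacobson radical $\mathrm{rad}(A)$ is nilpotent, so $\mathrm{rad}(A)\otimes_F L$ is a nilpotent ideal of the semi-simple algebra $A\otimes_F L$ and must vanish, forcing $\mathrm{rad}(A)=0$ by faithful flatness of $L/F$) then transports finite-dimensional semi-simplicity back to $\End_{\NNum(k)_F}(M)$; a second appeal to the Jannsen--Andr\'e-Kahn criterion concludes.

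For the uniqueness of $\cN$, I would invoke \cite[Prop.~7.1.4]{AK}: $\cN$ is the largest proper $\otimes$-ideal of $\NChow(k)_F$, so any $\otimes$-ideal $\cJ$ whose pseudo-abelian quotient is abelian semi-simple must satisfy $\cJ \subseteq \cN$, and if the inclusion were strict the induced $\otimes$-surjection $(\NChow(k)_F/\cJ)^\natural \twoheadrightarrow \NNum(k)_F$ would produce a nontrivial proper $\otimes$-ideal of the semi-simple category $(\NChow(k)_F/\cJ)^\natural$, contradicting the same maximality property applied there. The only non-routine step in the whole plan is the descent-of-semi-simplicity in case~(ii); everything else is either a direct citation or a formal manipulation within the change-of-coefficients framework already set up.
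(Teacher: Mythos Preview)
Your proof is correct; for case~(i) and for the uniqueness of $\cN$ it coincides with the paper's argument. For case~(ii), however, you take a genuinely different route. The paper applies \cite[Thm.~1.9]{Semi} directly to the \emph{smaller} coefficient field $K$ (using that $k$ is a field extension of $K$) to obtain that $\NNum(k)_K$ is abelian semi-simple, and then \emph{extends} scalars from $K$ up to $F$ via \eqref{eq:Numscalars}, arguing that finite-dimensional semi-simplicity is preserved under this extension because the Jacobson radical base-changes well (citing \cite[Prop.~4.1.1]{AK}). You instead construct a \emph{larger} common extension $L \supseteq k,F$, apply case~(i) at the top (legitimate, since $k$ is a field hence local and $L$ is a $k$-algebra), and then \emph{descend} from $L$ down to $F$. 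Your descent argument is arguably the more robust direction: semi-simplicity of a finite-dimensional algebra always descends along a faithfully flat field extension, with no separability input needed, whereas preservation under extension---the paper's direction---genuinely requires an auxiliary result such as \cite[Prop.~4.1.1]{AK}. The paper's route, on the other hand, is more direct and avoids the Zorn-type construction of a compositum.
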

\begin{remark} Theorem~\ref{thm:semi} extends the original semi-simplicity result~\cite[Thm.~1.9]{Semi}. The latter is obtained from the former by taking $K=F$.
\end{remark}
\begin{proof}
Condition (i) is the same as the one of \cite[Thm.~1.9]{Semi}. Hence, let us assume condition (ii). Since $k$ is a field extension of $K$ we conclude by \cite[Thm.~1.9]{Semi} that the category $\NNum(k)_K$ is abelian semi-simple. By hypothesis, $F$ is also a field extension of $K$ and so by \eqref{eq:Numscalars} (with $F$ and $K$ switched) we obtain an equivalence of categories
$$ (\NNum(k)_K \otimes_K F)^\natural \stackrel{\sim}{\too} \NNum(k)_F\,.$$
In order to prove that $\NNum(k)_F$ is abelian semi-simple it suffices then to show that for every object $N \in \NNum(k)K \otimes_K F$, the $F$-algebra $\End_{\NNum(k)_K \otimes_KF}(N)$ of endomorphisms is finite dimensional and semi-simple; see \cite[Lemma~2]{Jannsen}. Note that we have a natural isomorphism
\begin{equation}\label{eq:equality}
\End_{\NNum(k)_K \otimes_KF}(N) \simeq \End_{\NNum(k)_K}(N)\otimes_K F\,.
\end{equation}
The category $\NNum(k)_K$ is abelian semi-simple and so the $K$-algebra of endomorphisms $\End_{\NNum(k)_K}(N)$ is finite dimensional and its Jacobson radical is trivial. By \eqref{eq:equality}, we then conclude that the $F$-algebra $\End_{\NNum(k)_K \otimes_KF}(N)$ is also finite dimensional. Moreover, since the Jacobson radical of $\End_{\NNum(k)_K \otimes_KF}(N)$ is obtained from the one of $\End_{\NNum(k)_K}(N)$ by base change we conclude that it is also trivial; see \cite[Prop.~4.1.1]{AK}. This implies that the $F$-algebra $\End_{\NNum(k)_K\otimes_KF}(N)$ is semi-simple and so the category $\NNum(k)_F$ is abelian semi-simple. The second claim of the theorem, concerning the $\otimes$-ideal $\cJ$, follows from \cite[Prop.~7.1.4 c)]{AK}.
\end{proof}

\section{Schur-finiteness}\label{sec:Schur}
In what follows, $F$ will be a field of characteristic zero. Let $(\cC, \otimes, {\bf 1})$ be a $F$-linear, idempotent complete, symmetric monoidal category. As explained in \cite[\S3.1]{Andre-BourbakiSem}, given an object $X \in \cC$ and an integer $n \geq 1$, the symmetric group $S_n$ acts on  $X^{\otimes n}$ by permutation of its factors. The isomorphism classes $V_\lambda$ of the irreducible $\bbQ$-linear representations of $S_n$ are in canonical bijection with the partitions $\lambda$ of $n$. As a consequence, the group ring $\bbQ[S_n]$ can be written as $\prod_{\lambda, |\lambda|=n} \mathrm{End}_\bbQ V_\lambda$. Let $c_\lambda$ be the idempotent of $\bbQ[S_n]$ defining the representation $V_\lambda$. The {\em Schur functor} $S_\lambda$ is then defined by the following formula
\begin{eqnarray*}
S_\lambda: \cC \too \cC && X \mapsto S_\lambda(X):= c_\lambda(X^{\otimes n})\,.
\end{eqnarray*}
Note that for $\lambda=(n)$ the Schur functor $S_{(n)}$ is the
symmetric power functor, while for $\lambda=(1,\ldots, 1)$ the Schur functor $S_{(1,\ldots,1)}$ 
is the exterior power functor. An object $X \in \cC$ is called {\em Schur-finite} if there exists an integer $n \geq 1$ and a partition $\lambda$ of $n$ such that $X$ is annihilated by the Schur functor of $\lambda$, \ie $S_\lambda(X)=0$. The category $\cC$ is called {\em Schur-finite} if all its objects are Schur-finite.
\begin{lemma}\label{lem:Schur}
Let $L:\cC_1 \to \cC_2$ be a $F$-linear symmetric monoidal functor.
\begin{itemize}
\item[(i)] If $X \in \cC_1$ is Schur-finite, then $L(X)$ is Schur-finite.
\item[(ii)] When $L$ is moreover faithful, then the converse holds. In other words, if $L(X)$ is Schur-finite, then $X$ is Schur-finite.
\end{itemize}
\end{lemma}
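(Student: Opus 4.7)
The whole lemma is a formal consequence of the fact that the Schur functors are built from three ingredients that any $F$-linear symmetric monoidal functor automatically preserves: tensor powers, the $S_n$-action by permutation, and the $\bbQ$-linear idempotent $c_\lambda \in \bbQ[S_n]$. The plan is therefore to produce, for every object $X \in \cC_1$ and every partition $\lambda$ of $n\geq 1$, a natural isomorphism
$$ L(S_\lambda(X)) \simeq S_\lambda(L(X)) $$
and then deduce (i) and (ii) from it.

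First, since $L$ is symmetric monoidal, there is a canonical isomorphism $L(X^{\otimes n}) \simeq L(X)^{\otimes n}$, and because $L$ preserves the symmetry isomorphism constraints, this isomorphism intertwines the $S_n$-action on $L(X^{\otimes n})$ induced by permutation of tensor factors in $\cC_1$ with the corresponding $S_n$-action in $\cC_2$. Second, because $L$ is $F$-linear and $F$ has characteristic zero, $L$ sends any $\bbQ$-linear combination of endomorphisms to the same $\bbQ$-linear combination of their images; in particular, the image of the idempotent $c_\lambda$ acting on $X^{\otimes n}$ agrees (under the identification above) with $c_\lambda$ acting on $L(X)^{\otimes n}$. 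Combining these two observations yields the asserted natural isomorphism $L(S_\lambda(X)) \simeq S_\lambda(L(X))$.

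Item (i) follows at once: if $S_\lambda(X)=0$, then $S_\lambda(L(X)) \simeq L(S_\lambda(X)) = L(0) = 0$. For item (ii), assume $L$ is faithful and that $S_\lambda(L(X))=0$ for some partition $\lambda$. By the natural isomorphism, $L(S_\lambda(X))=0$ as well. In any $F$-linear (hence additive) category, an object $Y$ satisfies $L(Y)=0$ if and only if $\id_{L(Y)}=0$, i.e.\ $L(\id_Y)=L(0_Y)$; faithfulness of $L$ then forces $\id_Y=0_Y$, so $Y=0$. Applying this to $Y=S_\lambda(X)$ gives $S_\lambda(X)=0$, proving that $X$ is Schur-finite.

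There is no real obstacle here; the only point requiring minimal care is the symmetric (as opposed to merely monoidal) nature of $L$, which is what makes the $S_n$-actions on the two sides correspond under the comparison map $L(X^{\otimes n}) \simeq L(X)^{\otimes n}$.
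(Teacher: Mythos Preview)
Your argument is correct and is exactly the standard one. The paper does not actually prove this lemma; it simply states that ``the proof of Lemma~\ref{lem:Schur} is an easy exercise that we leave for the reader,'' and your proposal supplies precisely that exercise. The only implicit point worth making explicit is that an $F$-linear (hence additive) functor automatically preserves splittings of idempotents, so that $L(c_\lambda(X^{\otimes n}))$ really is the image of $c_\lambda$ acting on $L(X)^{\otimes n}$; you use this when asserting $L(S_\lambda(X))\simeq S_\lambda(L(X))$, and it holds because if $e$ splits through $Z$ then $L(e)$ splits through $L(Z)$.
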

The proof of Lemma~\ref{lem:Schur} is an easy exercise that we leave for the reader. 

\section{Super-Tannakian structure}\label{sec:super}
In this section we prove Theorem~\ref{thm:main1}.

\begin{proposition}\label{prop:Schur}
Let $k$ and $F$ be as in Theorem~\ref{thm:semi} with $F$ of characteristic zero. Then, the category $\NNum(k)_F$ is Schur-finite.
\end{proposition}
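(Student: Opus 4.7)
The plan is to deduce Schur-finiteness of $\NNum(k)_F$ from an exponential-versus-factorial comparison. In a rigid tensor category in which the endomorphism algebras of tensor powers of an object $\cA$ grow at most exponentially in $n$, the permutation representation $\rho_n: F[S_n] \to \End_{\NNum(k)_F}(\cA^{\otimes n})$ must eventually fail to be injective, and any simple factor in its kernel supplies a Young symmetrizer $c_\lambda$ with $S_\lambda(\cA)=0$.

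The first step is a reduction: every object of $\NNum(k)_F$ is a direct summand of $\cA$ for some smooth and proper dg category $\cA$, and the class of Schur-finite objects is stable under direct summands (if $Y$ is a summand of $X$ cut out by an $S_n$-equivariant idempotent on $X^{\otimes n}$, then $S_\lambda(Y)$ is a summand of $S_\lambda(X)$), so it suffices to treat a single smooth and proper dg category $\cA$. Because $F$ has characteristic zero, $F[S_n]\simeq\prod_{\lambda\vdash n}\End_F(V_\lambda)$ and $S_\lambda(\cA)=0$ is equivalent to $\rho_n(c_\lambda)=0$; it is therefore enough to find some $n$ with $\dim_F\End_{\NNum(k)_F}(\cA^{\otimes n})<n!$.

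For the dimension bound, set $d = \sum_i \dim_k HH_i(\cA)$, which is finite since $\cA$ is smooth and proper. The tensor product $\cA^{\otimes n}$ is again smooth and proper, and the K\"unneth formula yields $HH_\ast(\cA^{\otimes n})\simeq HH_\ast(\cA)^{\otimes n}$ of total dimension $d^n$ over $k$. By the very definition of numerical equivalence, the intersection pairing $(X,Y)\mapsto [HH(\cA^{\otimes n}; X\otimes^\bbL_{\cA^{\otimes n}} Y)]$ descends to a non-degenerate bilinear form on the quotient $\End_{\NNum(k)_F}(\cA^{\otimes n})$; since this pairing factors through the Dennis/Chern character into $HH_\ast((\cA^{\otimes n})^\op\otimes^\bbL_k\cA^{\otimes n})_F$, non-degeneracy forces $\dim_F\End_{\NNum(k)_F}(\cA^{\otimes n})\leq d^{2n}$. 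As $n!/d^{2n}\to\infty$ by Stirling, a sufficiently large $n$ yields a non-injective $\rho_n$ and hence a vanishing Schur functor.

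The hard part is the factorization claim: that $[HH(\cA^{\otimes n}; X\otimes^\bbL Y)]\in K_0(k)_F$ depends on $[X]$ and $[Y]$ only through their Chern characters. This is the expected compatibility between the composition of bimodules and the multiplication on Hochschild homology of a smooth and proper dg category, and is the main place where care is needed. Everything else in the argument is a direct consequence of the semi-simplicity of $\NNum(k)_F$ (Theorem~\ref{thm:semi}) and the factorial growth of $n!$.
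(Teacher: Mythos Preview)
Your approach is valid in outline but differs substantially from the paper's, and the step you flag as ``the hard part'' is in fact the easy consequence of a cleaner formulation you are missing.

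The paper does not count dimensions at all. It observes that Hochschild homology gives a $F$-linear \emph{symmetric monoidal} functor $\overline{HH}:\NChow(k)_F\to\cD_c(F)$ (or $\cD_c(k)$), that the target category is Schur-finite (being equivalent to finite-dimensional $\bbZ$-graded vector spaces), and then transfers Schur-finiteness back via Lemma~\ref{lem:Schur}(ii) to the faithful quotient $(\NChow(k)_F/\mathrm{Ker}(\overline{HH}))^\natural$. Since $\cN$ is the largest proper $\otimes$-ideal, one has $\mathrm{Ker}(\overline{HH})\subset\cN$, so Lemma~\ref{lem:Schur}(i) pushes Schur-finiteness forward to $\NNum(k)_F$. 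Your ``factorization claim'' --- that the intersection number depends only on Chern characters --- is exactly the statement that a symmetric monoidal functor preserves categorical traces, hence $\mathrm{Ker}(\overline{HH})\subset\cN$; once phrased this way it is immediate and needs no bimodule-level compatibility argument. Your dimension bound $\dim_F\End_{\NNum(k)_F}(\cA^{\otimes n})\le d^{2n}$ then follows from the same inclusion (it bounds the $\NNum$-endomorphisms by $\End_{\cD_c}(HH(\cA)^{\otimes n})$), so your exponential-versus-factorial argument is really a hands-on unwinding of the paper's abstract transfer.

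There is one genuine omission: you do not handle the case split in Theorem~\ref{thm:semi}. Your bound compares an $F$-dimension to a $k$-dimension via ``$HH_\ast(\ldots)_F$'', which only makes sense when $F$ is a $k$-algebra or $k$ is an $F$-algebra. In condition~(ii) of Theorem~\ref{thm:semi}, where $k$ and $F$ are merely common extensions of a field $K$, the paper first proves Schur-finiteness of $\NNum(k)_K$ (where the Hochschild argument applies) and then uses the change-of-coefficients equivalence~\eqref{eq:Numscalars} to pass to $\NNum(k)_F$. Your write-up should include this reduction.
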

\begin{proof}
Note first that by construction the category $\NNum(k)_F$ is $F$-linear, idempotent complete, and symmetric monoidal. Let us assume first that $k$ and $F$ satisfy condition (i) of Theorem~\ref{thm:semi}. Then, as explained in the proof of \cite[Thm.~1.9]{Semi}, Hochschild homology $(HH)$ gives rise to a $F$-linear symmetric monoidal functor
\begin{equation}\label{eq:HH}
\overline{HH}: \NChow(k)_F \too \cD_c(F)\,.
\end{equation}
Let us denote by $\mathrm{Ker}(\overline{HH})$ the associated kernel. Since this is a $\otimes$-ideal of $\NChow(k)_F$ the induced functor
$$ (\NChow(k)_F/\mathrm{Ker}(\overline{HH}))^\natural \too \cD_c(F)$$
is not only $F$-linear and faithful but moreover symmetric monoidal. Note that the category $\cD_c(F)$ can be naturally identified with the category of those $\bbZ$-graded $F$-vector spaces $\{ V_n \}_{n \in \bbZ}$ such that $\mathrm{dim} (\oplus_n V_n) < \infty$. As a consequence, $\cD_c(F)$ is Schur-finite. By Lemma~\ref{lem:Schur}(ii) we then conclude that $(\NChow(k)_F/\mathrm{Ker}(\overline{HH}))^\natural$ is Schur-finite. Now, recall from \S\ref{sub:Num} that the ideal $\cN$ is the largest $\otimes$-ideal of $\NChow(k)_F$ (distinct from the entire category). The inclusion $\mathrm{Ker}(\overline{HH})\subset \cN$ of $\otimes$-ideals gives then rise to a $F$-linear symmetric monoidal functor
\begin{equation}\label{eq:func-idemp}
(\NChow(k)_F/ \mathrm{Ker}(\overline{HH}))^\natural \too (\NChow(k)_F/\cN)^\natural =: \NNum(k)_F\,.
\end{equation}
By combining Lemma~\ref{lem:Schur}(i) with the fact that the category $(\NChow(k)_F/\mathrm{Ker}(\overline{HH}))^\natural$ is Schur-finite, we then conclude that all noncommutative numerical motives in the image of the functor \eqref{eq:func-idemp} are Schur-finite. Finally, since every noncommutative numerical motive is a direct factor of one of these and Schur-finiteness is clearly stable under direct factors, we then conclude that the category $\NNum(k)_F$ is Schur-finite.

Let us now assume that $k$ and $F$ satisfy condition (ii) of Theorem~\ref{thm:semi}. If $K=F$, then $k$ is a field extension of $F$ and so the proof is similar to the one above: the functor \eqref{eq:HH} take values not in $\cD_c(F)$ but in $\cD_c(k)$, but since $\cD_c(k)$ is also Schur-finite the same reasoning applies. Now, let us assume that $k$ and $F$ are two field extension of a (non-trivial) field  $K$. Note that since $F$ is of characteristic zero, $K$ is also of characteristic zero. In this case, $k$ is a field extension of $K$, and so the preceding arguments shows us that $\NNum(k)_K$ is Schur-finite. As explained in \S\ref{sec:coefficients} we have a canonical $K$-linear symmetric monoidal functor $\NNum(k)_K \to \NNum(k)_K \otimes_K F$. By construction the categories $\NNum(k)_K$ and $\NNum(k)_K \otimes_K F$ have the same objects and so Lemma~\ref{lem:Schur}(i), combined with the fact that Schur-finiteness is stable under direct factors, implies that $(\Num(k)_K\otimes_K F)^\natural$ is also Schur-finite. Finally, the $\otimes$-equivalence \eqref{eq:Numscalars} (with $F$ and $K$ switched) allows us to conclude that $\NNum(k)_F$ is Schur-finite.
\end{proof}
Proposition~\ref{prop:Schur} implies a similar result in the commutative world.
\begin{corollary}
Let $k$ and $F$ be a fields of characteristic zero. Then, the category $\Num(k)_F$ is Schur-finite.
\end{corollary}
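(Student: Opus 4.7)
The plan is to transfer Schur-finiteness from the noncommutative side to the commutative side by means of a faithful symmetric monoidal functor provided by Proposition~\ref{prop:extension}.

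First, I would apply Proposition~\ref{prop:Schur} to conclude that $\NNum(k)_F$ is Schur-finite. Since $k$ and $F$ are fields of characteristic zero, they are both field extensions of the prime field $\mathbb{Q}$, so condition (ii) of Theorem~\ref{thm:semi} is satisfied with $K=\mathbb{Q}$, and Proposition~\ref{prop:Schur} applies.

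Next, consider the composition
$$ \Num(k)_F \xrightarrow{\;\tau\;} \Num(k)_F/_{\!\!-\otimes \bbQ(1)} \xrightarrow{\;R_\cN\;} \NNum(k)_F $$
coming from diagram \eqref{eq:diagram-new}. By Proposition~\ref{prop:extension}, each factor is $F$-linear, additive, and symmetric monoidal; moreover $\tau$ is faithful and $R_\cN$ is fully-faithful, so their composite is a faithful $F$-linear symmetric monoidal functor.

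Finally, given any object $M \in \Num(k)_F$, its image $R_\cN(\tau(M))$ lies in $\NNum(k)_F$, which is Schur-finite by the first step. Hence by Lemma~\ref{lem:Schur}(ii) applied to the composite $R_\cN\circ\tau$, the object $M$ is itself Schur-finite. As $M$ was arbitrary, $\Num(k)_F$ is Schur-finite.

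The argument is essentially a diagram chase and I do not anticipate any obstacle; the only point requiring mild care is verifying that the hypotheses of Proposition~\ref{prop:Schur} are met under the weaker assumption of the corollary (arbitrary $k$, $F$ of characteristic zero), which is handled by taking $K=\mathbb{Q}$ in condition (ii) of Theorem~\ref{thm:semi}.
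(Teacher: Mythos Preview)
Your proposal is correct and follows essentially the same approach as the paper's own proof: both use the faithful $F$-linear symmetric monoidal composite $\Num(k)_F \stackrel{\tau}{\to} \Num(k)_F/_{\!\!-\otimes \bbQ(1)} \stackrel{R_\cN}{\to} \NNum(k)_F$ from diagram~\eqref{eq:diagram-new}, invoke Proposition~\ref{prop:Schur} (with $K=\bbQ$ in condition~(ii) of Theorem~\ref{thm:semi}) to get Schur-finiteness of $\NNum(k)_F$, and then pull it back via Lemma~\ref{lem:Schur}(ii).
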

\begin{proof}
Recall from diagram \eqref{eq:diagram-new} the following composition of faithful, $F$-linear, additive, symmetric monoidal functors 
\begin{equation*}
\Num(k)_F \too \Num(k)_F/_{\!\!-\otimes \bbQ(1)} \stackrel{R_\cN}{\too} \NNum(k)_F\,.
\end{equation*}
Since $k$ and $F$ are of characteristic zero, they satisfy condition (ii) of Theorem~\ref{thm:semi} (with $K=\bbQ$). Then, Proposition~\ref{prop:Schur} combined with Lemma~\ref{lem:Schur}(ii) allows us to conclude that the category $\Num(k)_F$ is Schur-finite. 
\end{proof}
\subsection*{Proof of Theorem~\ref{thm:main1}}
Note first that by construction the category $\NNum(k)_F$ is $F$-linear, additive, and rigid symmetric monoidal. Its $\otimes$-unit is the noncommutative Chow motive $\underline{k}$. Since $k$ and $F$ are as in Theorem~\ref{thm:semi}, the equality $$\End_{\NNum(k)_F}(\underline{k})\simeq K_0(k)_F\simeq F$$ holds. Moreover, due to Theorem~\ref{thm:semi} the category $\NNum(k)_F$ is also abelian (semi-simple). By Deligne's intrinsic characterization (see Theorem~\ref{sutannDel}) it suffices then to show that $\NNum(k)_F$ is Schur-finite. This is the content of Proposition~\ref{prop:Schur} and so the proof is finished.
\section{Periodic cyclic homology}\label{sec:HP}
In this section we prove that periodic cyclic homology ($HP$) gives rise to a well-defined symmetric monoidal functor on the category of noncommutative Chow motives; see Theorem~\ref{thm:HP}. In what follows, $k$ will be a field.

Following Kassel~\cite[\S1]{Kassel}, a {\em mixed complex} $(M,b,B)$ is a $\bbZ$-graded $k$-vector space $\{M_n\}_{n \in \bbZ}$ endowed with a degree $+1$ endomorphism $b$ and a degree $-1$ endomorphism $B$ satisfying the relations $b^2=B^2=Bb +bB=0$. Equivalently, a mixed complex is a right dg module over the dg algebra $\Lambda:=k[\epsilon]/\epsilon^2$, where $\epsilon$ is of degree $-1$ and $d(\epsilon)=0$. Recall from \cite[Example~7.10]{CT1} the construction of the mixed complex functor $C: \dgcat(k) \to \cD(\Lambda)$ with values in the derived category of $\Lambda$. As explained by Kassel in~\cite[page~210]{Kassel}, there is a well-defined {\em $2$-perioditization functor} sending a mixed complex $(M,b,B)$ to the following $\bbZ/2$-graded complex of $k$-vector spaces
$$
 \prod_{\mathrm{n\,even}} \xymatrix{M_n \ar@<1ex>[r]^-{b+B} & \ar@<1ex>[l]^-{b+B}}  \prod_{\mathrm{n\,odd}} M_n \,.
$$
This functor preserves weak equivalences and when combined with $C$ gives rise to periodic cyclic homology
\begin{equation}\label{eq:HP1}
HP: \dgcat(k) \stackrel{C}{\too} \cD(\Lambda) \too \cD_{\bbZ/2}(k)\,.
\end{equation}
Here, $\cD_{\bbZ/2}(k)$ stands for the derived category of $\bbZ/2$-graded complexes.
\begin{theorem}\label{thm:HP} When $F$ is a field extension of $k$, the functor \eqref{eq:HP1} gives rise to a $F$-linear symmetric monoidal functor
\begin{equation}\label{eq:induced}
\overline{HP_\ast}: \NChow(k)_F \too \mathrm{sVect}(F)
\end{equation}
with values in the category of finite dimensional super $F$-vector spaces. On the other hand, when $k$ is a field extension of $F$, the functor \eqref{eq:HP1} gives rise to a $F$-linear symmetric monoidal functor
\begin{equation}\label{eq:induced1}
\overline{HP_\ast}: \NChow(k)_F \too \mathrm{sVect}(k)
\end{equation}
with values in the category of finite dimensional super $k$-vector spaces.
\end{theorem}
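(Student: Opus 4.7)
The plan is to construct the functor $\overline{HP_\ast}$ in three stages: first descend $HP$ from $\dgcat(k)$ through the homotopy category $\Hmo(k)$ and then through the category of noncommutative correspondences, then verify the Künneth formula for smooth and proper dg categories to upgrade it to a symmetric monoidal functor, and finally handle the coefficient field $F$ via scalar extension.

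First, I would recall that the mixed complex functor $C:\dgcat(k)\to\cD(\Lambda)$ inverts derived Morita equivalences (Keller) and that the $2$-periodization functor $\cD(\Lambda)\to\cD_{\bbZ/2}(k)$ preserves quasi-isomorphisms; hence \eqref{eq:HP1} descends to a functor out of $\Hmo(k)$. Moreover $HP$ is an additive invariant in the sense of Tabuada: a bimodule $X\in\rep(\cA,\cB)$ induces a map $HP(\cA)\to HP(\cB)$ that behaves well under direct sums, and this assignment is compatible with the (derived) tensor product of bimodules. Since morphisms in $\NChow(k)_F$ are classes in $K_0(\cA^{\op}\otimes^{\bbL}_k\cB)_F$ represented by formal $F$-linear combinations of objects of $\rep(\cA,\cB)$, additivity guarantees that the assignment $[X]\mapsto HP(X)$ extends first to $K_0(\cA^{\op}\otimes^{\bbL}_k\cB)$, then by $F$-linear extension to $K_0(\cA^{\op}\otimes^{\bbL}_k\cB)_F$, and finally descends to the pseudo-abelian envelope (since $\cD_{\bbZ/2}$ is idempotent complete). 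This produces a well-defined $F$-linear functor out of $\NChow(k)_F$.

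Second, I would verify that for $(\cA,e)$ a noncommutative Chow motive the value lands in a finite-dimensional super vector space. Since $\cA$ is smooth and proper, it is dualizable in $\Hmo(k)$ (by \cite[Thm.~4.8]{CT1}); applying the symmetric monoidal functor $HP$ then shows that $HP(\cA)$ is dualizable in $\cD_{\bbZ/2}(k)$, hence has finite total dimension as a $\bbZ/2$-graded $k$-vector space. Taking the image of the idempotent $e$ preserves finite-dimensionality. The symmetric monoidality of $\overline{HP_\ast}$ then reduces to the Künneth isomorphism $HP(\cA\otimes^{\bbL}_k\cB)\simeq HP(\cA)\otimes_k HP(\cB)$ for smooth and proper dg categories; this follows from the fact that both $\cA\mapsto HP(\cA)$ and $\cA\mapsto C(\cA)$ are lax monoidal (Kassel) and the lax constraints become isomorphisms on dualizables, exactly the case at hand.

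Third, I would address the two coefficient conventions. If $F$ is a field extension of $k$, we tensor the target $\cD_{\bbZ/2}(k)$-valued functor with $F$ over $k$ to land in $\mathrm{sVect}(F)$, giving \eqref{eq:induced}; $F$-linearity on morphisms is then automatic by construction of $\NChow(k)_F$ and of $-\otimes_k F$. If instead $k$ is a field extension of $F$, we keep the target in $\mathrm{sVect}(k)$, viewed as an $F$-linear symmetric monoidal category via the inclusion $F\hookrightarrow k$, to obtain \eqref{eq:induced1}. I expect the main obstacle to be the first stage: the passage from $HP$ as a functor on $\dgcat(k)$ to a functor on noncommutative correspondences requires the full additive/Morita functoriality of $HP$ with respect to bimodules, and one must be careful to identify, for each $X\in\rep(\cA,\cB)$, the induced map $HP(\cA)\to HP(\cB)$ and check its compatibility with composition of bimodules and with the $F$-linear structure on $K_0$-groups. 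The finite-dimensionality and Künneth steps, by contrast, follow cleanly from the dualizability characterization of smooth and proper dg categories.
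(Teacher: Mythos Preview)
Your outline matches the paper's closely in stages one and three: the paper also factors through the universal additive invariant $\cU:\dgcat(k)\to\Hmo_0(k)$, then tensors morphism groups with $F$ and passes to the pseudo-abelian envelope, and it handles the two coefficient cases exactly as you describe.

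The gap is in your second stage. You write ``applying the symmetric monoidal functor $HP$'' to deduce that $HP(\cA)$ is dualizable, and then that ``the lax constraints become isomorphisms on dualizables.'' But $HP$ is \emph{not} known to be symmetric monoidal at this point: the mixed complex functor $C$ is strong monoidal, but the $2$-periodization functor is only \emph{lax} monoidal, because it is built from infinite products which do not commute with tensor products. Lax monoidal functors do not in general preserve dualizable objects, nor do their lax structure maps automatically become isomorphisms on dualizables; so your argument for both finite-dimensionality and the K\"unneth isomorphism is circular as written.

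The paper repairs this as follows. For finite-dimensionality it uses Hochschild homology $HH$, which \emph{is} strong monoidal on $\Hmo(k)$: dualizability of $\cA$ then gives $HH(\cA)\in\cD_c(k)$, hence $HH_n(\cA)$ is bounded and finite dimensional. Connes' periodicity (SBI) sequence then yields the Mittag--Leffler condition on the inverse system $(HC(\cA)[-2m],S)_m$ and finite-dimensionality of $HP_n(\cA)$ (this is Proposition~\ref{prop:aux}). These two conditions are precisely Emmanouil's ``property (II)'', and his K\"unneth theorem \cite[Thm.~4.2]{Emmanouil} is what upgrades the lax structure on $\overline{HP_\ast}$ to a genuine symmetric monoidal structure on $\Hmo_0(k)^{\mathrm{sp}}$. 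In short, the missing ingredient in your proposal is the detour through $HH$ and the appeal to Emmanouil's theorem; without it neither the finite-dimensionality nor the K\"unneth step goes through.
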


\begin{proof}
As explained in \cite[Example~7.10]{CT1}, the mixed complex functor $C$ is symmetric monoidal. On the contrary, the $2$-perioditization functor is not symmetric monoidal. This is due to the fact that it uses infinite products and these do not commute with the tensor product. Nevertheless, as explained in \cite[page~210]{Kassel}, the $2$-perioditization functor is lax symmetric monoidal. Note that since by hypothesis $k$ is a field, the category $\cD_{\bbZ/2}(k)$ is $\otimes$-equivalent to the category $\mathrm{SVect}(k)$ of super $k$-vector spaces. Hence, the functor \eqref{eq:HP1} gives rise to a well-defined lax symmetric monoidal functor
\begin{equation}\label{eq:auxiliar1}
HP_\ast: \dgcat(k) \stackrel{C}{\too} \cD(\Lambda) \too \cD_{\bbZ/2}(k)\simeq \mathrm{SVect}(k)\,.
\end{equation} 
Now, recall from \cite[\S5]{IMRN} the construction of the additive category $\Hmo_0(k)$. Its objects are the small dg categories, its morphisms from $\cA$ to $\cB$ are given by the Grothendieck group $K_0\rep(\cA,\cB)$ of the triangulated category $\rep(\cA,\cB)$ (see \S\ref{sec:dg}), and its composition law is induced by the (derived) tensor product of bimodules. The (derived) tensor product of dg categories endows $\Hmo_0(k)$ with a symmetric monoidal structure. Moreover, there is a natural symmetric monoidal functor $\cU: \dgcat(k) \to \Hmo_0(k)$ which can be characterized as the {\em universal additive invariant}; consult \cite{Survey,IMRN} \cite[\S5.1]{ICM} for details. The above functor \eqref{eq:auxiliar1} is an example of an {\em additive invariant}, \ie it inverts derived Morita equivalences (see \S\ref{sec:dg}) and maps semi-orthogonal decompositions in the sense of Bondal-Orlov \cite{BO1} into direct sums. Hence, by the universal property of $\cU$, the additive invariant \eqref{eq:auxiliar1} gives rise to a well-defined lax symmetric monoidal functor
\begin{equation}\label{eq:auxiliar2}
\overline{HP_\ast}: \Hmo_0(k) \too \mathrm{SVect}(k)
\end{equation}
such that $\overline{HP_\ast}\circ \cU = HP_\ast$. Now, let us denote by $\Hmo_0(k)^{\mathrm{sp}} \subset \Hmo_0(k)$ the full subcategory of smooth and proper dg categories in the sense of Kontsevich; see \S\ref{sub:smooth}. Given smooth and proper dg categories $\cA$ and $\cB$, there is a natural equivalence of categories $\rep(\cA,\cB)\simeq \cD_c(\cA^\op \otimes_k \cB)$. As a consequence, we have the following description of the Hom-sets
$$ \Hom_{\Hmo_0(k)^{\mathsf{sp}}}(\cA,\cB):=K_0\rep(\cA,\cB) \simeq K_0(\cA^\op \otimes_k \cB)\,.$$
This allows us to conclude that the category $\NChow(k)_F$ can be obtained from $\Hmo_0(k)^{\mathrm{sp}}$ by first tensoring each abelian group of morphisms with the field $F$ and then passing to the associated pseudo-abelian envelope. Schematically, we have the following composition
\begin{equation}\label{eq:auxiliar3}
\Hmo_0(k)^{\mathrm{sp}} \stackrel{(-)_F}{\too} \Hmo_0(k)_F^{\mathrm{sp}} \stackrel{(-)^\natural}{\too} \NChow(k)_F\,.
\end{equation} 
By Proposition~\ref{prop:aux}(ii), the restriction of \eqref{eq:auxiliar2} to $\Hmo_0(k)^{\mathrm{sp}}$ gives then rise to a lax symmetric monoidal functor
\begin{equation}\label{eq:auxiliar4}
\overline{HP_\ast}: \Hmo_0(k)^{\mathrm{sp}} \too \mathrm{sVect}(k)\,.
\end{equation}
Moreover, conditions (i) and (ii) of Proposition~\ref{prop:aux} are precisely what Emmanouil named ``property (II)'' in \cite[page~211]{Emmanouil}. As a consequence, \cite[Thm.~4.2]{Emmanouil} implies that the functor \eqref{eq:auxiliar4} is in fact symmetric monoidal. 

We now have all the ingredients needed for the description of the functors \eqref{eq:induced} and \eqref{eq:induced1}. Let us assume first that $F$ is a field extension of $k$. Then, we have an induced extension of scalars functor $\mathrm{sVect}(k) \to \mathrm{sVect}(F)$. This functor is symmetric monoidal and the category $\mathrm{sVect}(F)$ is clearly idempotent complete. Using the description of $\NChow(k)_F$ given at diagram \eqref{eq:auxiliar3}, we then conclude that the above functor \eqref{eq:auxiliar4} extends to $\NChow(k)_F$, thus giving rise to the $F$-linear symmetric monoidal functor \eqref{eq:induced}. Now, let us assume that $k$ is a field extension of $F$. In this case the category $\mathrm{sVect}(k)$ is already $F$-linear. Hence, using the description of $\NChow(k)_F$ given at diagram \eqref{eq:auxiliar3}, we conclude that the above functor \eqref{eq:auxiliar4} gives rise to the induced $F$-linear symmetric monoidal functor \eqref{eq:induced1}. The proof is then finished.
\end{proof}

\begin{proposition}\label{prop:aux}
Let $\cA$ be a smooth and proper dg category. Then, the following two conditions hold:
\begin{itemize}
\item[(i)] The inverse system $(HC(\cA)[-2m],S)_m$ of cyclic homology $k$-vector spaces (see \cite[\S2.2]{Loday}) satisfies the Mittag-Leffler condition;
\item[(ii)] The periodic cyclic homology $k$-vector spaces $HP_n(\cA)$ are finite dimensional.
\end{itemize}
\end{proposition}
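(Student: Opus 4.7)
The plan is to deduce both items from Connes' SBI long exact sequence
\[ \cdots \to HH_n(\cA) \xrightarrow{I} HC_n(\cA) \xrightarrow{S} HC_{n-2}(\cA) \xrightarrow{B} HH_{n-1}(\cA) \to \cdots , \]
combined with the input provided by the smoothness and properness of $\cA$. Recall from \S\ref{sub:smooth} that these hypotheses force the diagonal bimodule $\cA(-,-)$ to lie in $\cD_c(\cA^\op\otimes_k^\bbL\cA)$ and consequently its Hochschild complex $HH(\cA)\simeq \cA\otimes^\bbL_{\cA^\op\otimes_k^\bbL\cA}\cA$ to lie in $\cD_c(k)$. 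In other words $HH(\cA)$ is a perfect $k$-complex, so each $HH_n(\cA)$ is finite dimensional and there exists an integer $N$ with $HH_n(\cA)=0$ for $|n|>N$.

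For item (i), I would inspect the SBI sequence at degree $n+2m+2$ to see that the kernel of $S\colon HC_{n+2m+2}(\cA)\to HC_{n+2m}(\cA)$ is a quotient of $HH_{n+2m+2}(\cA)$, while its cokernel embeds into $HH_{n+2m+1}(\cA)$. As soon as $n+2m+1>N$, both obstructions vanish and $S$ becomes an isomorphism. Hence for each fixed $n$ the inverse system $(HC_{n+2m}(\cA),S)_m$ is eventually constant, and the Mittag-Leffler condition holds trivially.

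For item (ii), I would combine (i) with the standard identification $HP_n(\cA)\simeq\varprojlim_m HC_{n+2m}(\cA)$: the system being eventually constant gives $HP_n(\cA)\simeq HC_{n+2m}(\cA)$ for any sufficiently large $m$, so it suffices to prove that each $HC_j(\cA)$ is finite dimensional. To initiate an induction on $j$ I would replace $\cA$ by a derived Morita-equivalent dg model whose underlying complex is bounded (which exists for smooth and proper dg categories), so that $HC_j(\cA)=0$ for $j\ll 0$. The SBI sequence then yields the estimate $\dim HC_j(\cA)\leq \dim HH_j(\cA)+\dim HC_{j-2}(\cA)$, and since each $HH_j(\cA)$ is finite dimensional this propagates finite dimensionality to all $j$, and hence to $HP_n(\cA)$.

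The main obstacle I expect will be the initial vanishing step $HC_j(\cA)=0$ for $j\ll 0$, which requires either producing a suitable bounded dg model for $\cA$ or invoking a separate boundedness statement (for instance, that a smooth and proper dg category is quasi-isomorphic to a dg algebra concentrated in bounded degrees). Once this is secured, the rest of the argument is an elementary manipulation of the SBI sequence using the perfectness of $HH(\cA)$ established in the first paragraph.
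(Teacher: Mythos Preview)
Your argument follows the same route as the paper's: first establish that $HH(\cA)\in\cD_c(k)$, then feed the resulting boundedness and finite-dimensionality of $HH_\ast(\cA)$ into the SBI sequence to control $S$ and deduce (i) and (ii). The only cosmetic difference in the first step is that the paper argues via dualizability---$HH$ is symmetric monoidal on $\Hmo(k)$ and therefore sends the dualizable object $\cA$ to a dualizable, hence perfect, object of $\cD(k)$---whereas you invoke the formula $HH(\cA)\simeq\cA\otimes^\bbL_{\cA^\op\otimes_k^\bbL\cA}\cA$ together with smoothness and properness.

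On your flagged ``main obstacle'': the paper simply asserts that finite-dimensionality of $HH_\ast$ together with the SBI sequence forces every $HC_m(\cA)$ to be finite dimensional, without isolating a base case, so your caution is well placed. However, the fix is lighter than producing a bounded dg model for $\cA$. The vanishing $HC_j(\cA)=0$ for $j<-N$ already follows from $HH_j(\cA)=0$ for $j<-N$ via the column filtration of the $(b,B)$-bicomplex computing $HC_\ast$: column $p\geq 0$ contributes $HH_{j-2p}(\cA)$ at the $E^1$-page, and for $j<-N$ one has $j-2p<-N$ for every $p\geq 0$, so the $E^1$-page is zero in those total degrees and the (bounded-below, exhaustive) filtration spectral sequence gives $HC_j(\cA)=0$. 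With this base case in hand, your upward induction $\dim HC_j\leq\dim HH_j+\dim HC_{j-2}$ runs unobstructed.
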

\begin{proof}
Let us start by proving the following two conditions:
\begin{itemize}
\item[(a)] the Hochschild homology $k$-vector spaces $HH_n(\cA)$ vanish for $|n|\gg 0$;
\item[(b)] the Hochschild homology $k$-vector spaces $HH_n(\cA)$ are finite dimensional. 
\end{itemize}
Recall from \cite[Example~7.9]{CT1} that the functor $HH: \dgcat(k) \to \cD(k)$ is symmetric monoidal. Moreover, it inverts derived Morita equivalences and so it descends to the homotopy category $\Hmo(k)$. As explained in \S\ref{sub:smooth}, every smooth and proper dg category $\cA$ is dualizable in the symmetric monoidal category $\Hmo(k)$. As a consequence, $HH(\cA)$ is a dualizable object in $\cD(k)$, \ie it belongs to $\cD_c(k)$. This implies that conditions (a) and (b) are verified. Now, condition (a) combined with Connes' periodicity exact sequence (see \cite[Thm.~2.2.1]{Loday})
\begin{equation}\label{eq:SBI}
\cdots \stackrel{B}{\too} HH_n(\cA) \stackrel{I}{\too} HC_n(\cA) \stackrel{S}{\too} HC_{n-2}(\cA) \stackrel{B}{\too} HH_{n-1}(\cA) \stackrel{I}{\too} \cdots\,,
\end{equation}
allows us to conclude that the map $S$ is an isomorphism for $|n|\gg 0$. Hence, the inverse system
$$ \cdots \stackrel{S}{\too} HC_{n+2r}(\cA) \stackrel{S}{\too} HC_{n+2r-2}(\cA) \stackrel{S}{\too} \cdots \stackrel{S}{\too} HC_n(\cA)$$
satisfies the Mittag-Leffler condition, and so condition (i) is verified. As a consequence we obtain the following equality
\begin{equation}\label{eq:description}
HP_n(\cA)= \underset{r}{\mathrm{lim}}\, HC_{n+2r}(\cA)\,.
\end{equation}
Condition (b) and the long exact sequence \eqref{eq:SBI} imply that $HC_m(\cA)$ is a finite dimensional $k$-vector space for every $m \in \bbZ$. This fact combined with equality \eqref{eq:description} and with the fact that $S$ is an isomorphism for $|n|\gg 0$ allows us then to conclude that $HP_n(\cA)$ is a finite dimensional $k$-vector space. Condition (ii) is then verified and so the proof is finished.
\end{proof}

\section{Noncommutative standard conjecture $C_{NC}$}\label{sec:Kunneth}
In this section we start by recalling the standard conjecture $C$ and the sign conjecture $C^+$. Then, we prove that the noncommutative standard conjecture $C_{NC}$ is stable under tensor products (see Proposition~\ref{prop:stable-tensor}) and finally Theorem~\ref{thm:equality}. In what follows, $Z$ will be a smooth projective $k$-scheme over a field $k$ of characteristic zero.

Recall from \cite[\S3.4]{Andre} that with respect to de Rham cohomology, we have the following K{\"u}nneth projectors
\begin{eqnarray*}
\pi^n_{Z}: H_{dR}^\ast(Z) \twoheadrightarrow  H^n_{dR}(Z) \hookrightarrow H_{dR}^\ast(Z) && 0 \leq n \leq 2\, \mathrm{dim}(Z)\,.
\end{eqnarray*}

\smallbreak

{\bf Standard conjecture $C(Z)$ (see \cite[\S5.1.1.1]{Andre})}: {\it The K\"unneth projectors $\pi^n_Z$ are algebraic, \ie they can be written as  $\pi^n_{Z}=H^\ast_{dR}(\underline{\pi}^n_{Z})$, with $\underline{\pi}^n_{Z}$ algebraic correspondences.}

{\bf Sign conjecture $C^+(Z)$ (see \cite[\S5.1.3]{Andre})}: {\it The K{\"u}nneth projectors $\pi^+_{Z}:= \sum_{n=0}^{\mathrm{dim}(Z)} \pi^{2n}_{Z}$ (and hence $\pi^-_{Z}:= \sum_{n=1}^{\mathrm{dim}(Z)} \pi^{2n-1}_{Z}$)
are algebraic, \ie they can be written as $\pi^+_{Z}=H^\ast_{dR}(\underline{\pi}^+_{Z})$, with $\underline{\pi}^+_{Z}$ algebraic correspondences.
}
\begin{remark}
Clearly $C(Z) \Rightarrow C^+(Z)$. Recall also from \cite[\S5.1.1.2]{Andre} that $C(Z)$ and $C^+(Z)$ could equivalently be formulated using any other classical Weil cohomology.
\end{remark}

\begin{proposition}\label{prop:stable-tensor}
Let $\cA$ and $\cB$ be two smooth and proper dg categories; see \S\ref{sub:smooth}. Then, the following implication holds
\begin{equation}\label{eq:implication}
C_{NC}(\cA) + C_{NC}(\cB) \Rightarrow C_{NC} (\cA \otimes_k \cB)\,.
\end{equation}
\end{proposition}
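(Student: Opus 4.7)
The plan is to leverage the symmetric monoidal nature of $\overline{HP_\ast}$ together with the super-structure of its target. Since $\overline{HP_\ast}: \NChow(k)_F \to \mathrm{sVect}$ is symmetric monoidal by Theorem~\ref{thm:HP}, there is a natural isomorphism
\[
\overline{HP_\ast}(\cA \otimes_k \cB) \;\simeq\; \overline{HP_\ast}(\cA) \otimes \overline{HP_\ast}(\cB)
\]
in the category of super vector spaces. Under this identification, the $\bbZ/2$-grading on the tensor product is the usual one, so the even/odd decomposition reads
\[
\overline{HP_\ast}^{+}(\cA \otimes_k \cB) = \bigl(\overline{HP_\ast}^{+}(\cA) \otimes \overline{HP_\ast}^{+}(\cB)\bigr) \oplus \bigl(\overline{HP_\ast}^{-}(\cA) \otimes \overline{HP_\ast}^{-}(\cB)\bigr),
\]
and similarly for the odd part. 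In terms of Künneth projectors this translates into the identities
\[
\pi^{+}_{\cA \otimes_k \cB} = \pi^{+}_{\cA} \otimes \pi^{+}_{\cB} + \pi^{-}_{\cA} \otimes \pi^{-}_{\cB}, \qquad
\pi^{-}_{\cA \otimes_k \cB} = \pi^{+}_{\cA} \otimes \pi^{-}_{\cB} + \pi^{-}_{\cA} \otimes \pi^{+}_{\cB}.
\]

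Next, I would invoke the hypotheses $C_{NC}(\cA)$ and $C_{NC}(\cB)$ to obtain noncommutative correspondences $\underline{\pi}^{\pm}_{\cA} \in K_0(\cA^{\op} \otimes_k^{\bbL} \cA)_F$ and $\underline{\pi}^{\pm}_{\cB} \in K_0(\cB^{\op} \otimes_k^{\bbL} \cB)_F$ whose images under $\overline{HP_\ast}$ are the respective projectors. Since $\NChow(k)_F$ is itself symmetric monoidal, the external tensor products $\underline{\pi}^{\epsilon_1}_{\cA} \otimes \underline{\pi}^{\epsilon_2}_{\cB}$ (for $\epsilon_i \in \{+,-\}$) yield well-defined noncommutative correspondences on $\cA \otimes_k \cB$. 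Setting
\[
\underline{\pi}^{+}_{\cA \otimes_k \cB} := \underline{\pi}^{+}_{\cA} \otimes \underline{\pi}^{+}_{\cB} + \underline{\pi}^{-}_{\cA} \otimes \underline{\pi}^{-}_{\cB}, \qquad
\underline{\pi}^{-}_{\cA \otimes_k \cB} := \underline{\pi}^{+}_{\cA} \otimes \underline{\pi}^{-}_{\cB} + \underline{\pi}^{-}_{\cA} \otimes \underline{\pi}^{+}_{\cB},
\]
the monoidality of $\overline{HP_\ast}$ gives $\overline{HP_\ast}(\underline{\pi}^{\pm}_{\cA \otimes_k \cB}) = \pi^{\pm}_{\cA \otimes_k \cB}$, which is exactly the content of $C_{NC}(\cA \otimes_k \cB)$.

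The only delicate point, and what I expect to be the main bookkeeping obstacle, is verifying that the symmetric monoidal isomorphism $\overline{HP_\ast}(\cA \otimes_k \cB) \simeq \overline{HP_\ast}(\cA) \otimes \overline{HP_\ast}(\cB)$ respects the super-grading in the way required for the Künneth formulas above, i.e.\ that the Koszul signs appearing in the symmetry constraint of $\mathrm{sVect}$ do not disturb the decomposition of the projectors. Once this is checked (it follows from the fact that the monoidality isomorphism is a morphism of $\bbZ/2$-graded objects), the argument above is complete.
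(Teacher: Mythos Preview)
Your proof is correct and follows essentially the same approach as the paper: use the symmetric monoidality of $\overline{HP_\ast}$ to express the K\"unneth projectors of $\cA\otimes_k\cB$ in terms of those of $\cA$ and $\cB$, then lift via the tensor product of the given noncommutative correspondences. In fact your K\"unneth formulas are the correct ones, whereas the paper's printed version contains evident typos (the term $\pi^+_\cA\,\widehat{\otimes}\,\pi^-_\cB$ appears in both displayed identities there); your concern about Koszul signs is unfounded, since the monoidality isomorphism is by definition a morphism in $\mathrm{sVect}$ and hence automatically respects the $\bbZ/2$-grading.
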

\begin{proof}
Recall from Theorem~\ref{thm:HP} that we have a well-defined $F$-linear symmetric monoidal functor
\begin{equation}\label{eq:func-mon}
\overline{HP_\ast}: \NChow(k)_F \too \mathrm{sVect}(K)\,.
\end{equation}
The field $K$ is equal to $F$ when $F$ is a field extension of $k$ and is equal to $k$ when $k$ is a field extension of $F$. Let us denote by $-\widehat{\otimes}-$ the symmetric monoidal structure on $\mathrm{sVect}(K)$. Then, we have the following equalities between the K\"unneth projectors
\begin{eqnarray*}
\pi^+_{\cA \otimes_k \cB} = \pi^+_\cA \widehat{\otimes} \pi^+_\cB + \pi^+_\cA \widehat{\otimes} \pi^-_\cB &&
\pi^-_{\cA \otimes_k \cB} = \pi^+_\cA \widehat{\otimes} \pi^-_\cB + \pi^-_\cA \widehat{\otimes} \pi^-_\cB\,.
\end{eqnarray*}
Since \eqref{eq:func-mon} is symmetric monoidal and by hypothesis we have noncommutative correspondences $\underline{\pi}^\pm_\cA$ and $\underline{\pi}^\pm_\cB$ such that $\pi^\pm_\cA= \overline{HP_\ast}(\underline{\pi}^\pm_\cA)$ and $\pi^\pm_\cB= \overline{HP_\ast}(\underline{\pi}^\pm_\cB)$, we conclude that the noncommutative standard conjecture $C_{NC}(\cA\otimes_k \cB)$ also holds. As a consequence, the above implication \eqref{eq:implication} holds and so the proof is finished.
\end{proof}
\subsection*{Proof of Theorem~\ref{thm:equality}}
As explained in \cite[\S4.2.5]{Andre}, de Rham cohomology $H^\ast_{dR}$ (considered as a Weil cohomology) gives rise to a symmetric monoidal functor
\begin{equation}\label{eq:deRham}
\overline{H^\ast_{dR}}: \Chow(k)_F \too \mathrm{GrVect}(k)
\end{equation}
with values in the category of finite dimensional $\bbZ$-graded $k$-vector spaces. On the other hand, as explained in Theorem~\ref{thm:HP}, periodic cyclic homology gives rise to a well-defined symmetric monoidal functor
\begin{equation}\label{eq:Hperiodic}
\overline{HP_\ast}: \Chow(k)_F \too \mathrm{sVect}(k)\,.
\end{equation}
Recall from diagram \eqref{eq:diagram-new} the following sequence of functors
\begin{equation}\label{eq:compChow}
\Chow(k)_F \stackrel{\tau}{\too} \Chow(k)_F/_{\!\!-\otimes \bbQ(1)} \stackrel{R}{\too} \NChow(k)_F\,.
\end{equation}
By combining \cite[Thm.~1.1]{CvsNC} with the change-of-coefficients mechanism we conclude that the image of $Z$ under the above composition \eqref{eq:compChow} identifies naturally with the noncommutative Chow motive $\cD_\perf^\dg(Z)$. Now, recall from Keller~\cite{Ringed} that $\overline{HP_\ast}(\cD_\perf^\dg(Z))\simeq HP_\ast(\cD_\perf^\dg(Z))$ agrees with the periodic cyclic homology $HP_\ast(Z)$ of the $k$-scheme $Z$ in the sense of Weibel~\cite{Weibel}. Since $k$ is a field of characteristic zero and $Z$ is smooth, the Hochschild-Konstant-Rosenberg theorem~\cite{Hodge} furnish us the following isomorphisms
\begin{eqnarray*}
HP^+_\ast(Z) \simeq \bigoplus_{n\, \even}H_{dR}^n(Z)  && HP^-_\ast(Z) \simeq \bigoplus_{n\, \odd}H_{dR}^n(Z) \,.
\end{eqnarray*}
These facts allows us to conclude that the composition of \eqref{eq:Hperiodic} with \eqref{eq:compChow} is the following functor
\begin{eqnarray}\label{eq:sHdR}
& s\overline{H^\ast_{dR}}: \Chow(k)_F \too \mathrm{sVect}(k) & Z \mapsto \left(\bigoplus_{n\, \even}H_{dR}^n(Z),\bigoplus_{n\, \odd}H_{dR}^n(Z)\right) 
\end{eqnarray}
naturally associated to \eqref{eq:deRham}. As a consequence, the K\"unneth projectors $\pi^\pm_Z$ of the sign conjecture $C^+(Z)$ agree with the K\"unneth projectors $\pi^\pm_{\cD_\perf^\dg(Z)}$ of the noncommutative standard conjecture $C_{NC}(\cD_\perf^\dg(Z))$. Now, if by hypothesis the sign conjecture $C^+(Z)$ holds, there exist algebraic correspondences $\underline{\pi}^\pm_Z$ realizing the K\"unneth projectors $\pi^\pm_Z$. Hence, by taking for noncommutative correspondences $\pi^\pm_{\cD_\perf^\dg(Z)}$ the image of $\underline{\pi}^\pm_Z$ under the above functor \eqref{eq:compChow} we conclude that the noncommutative standard conjecture $C_{NC}(\cD_\perf^\dg(Z))$ also holds. This achieves the proof.
\section{Tannakian structure}\label{sec:Tannaka}
In this section we prove Theorem~\ref{thm:main2}. In what follows, $k$ will be a field of characteristic zero. 

\begin{proposition}\label{prop:aux2}
Assume that $F$ is a field extension of $k$ or vice-versa and that the noncommutative standard conjecture $C_{NC}(\cA)$ holds for a smooth and proper dg category $\cA$. Then, given any noncommutative Chow motive of shape $(\cA,e)$ (see \S\ref{sub:Chow}), the kernel of the induced surjective ring homomorphism
$$ \End_{\NChow(k)_F/\mathrm{Ker}(\overline{HP_\ast})}((\cA,e)) \twoheadrightarrow \End_{\NChow(k)_F/\cN}((\cA,e))$$
is a nilpotent ideal.
\end{proposition}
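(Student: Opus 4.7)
The plan is to adapt to the noncommutative setting the classical Andr\'e--Kahn nilpotency theorem \cite[Thm.~1]{AK} for the kernel of the homological-to-numerical projection, with $\overline{HP_\ast}$ playing the role of the Weil cohomology and the K\"unneth decomposition supplied by $C_{NC}(\cA)$. First, $\mathrm{Ker}(\overline{HP_\ast})$ is a $\otimes$-ideal of $\NChow(k)_F$ and is therefore contained in $\cN$ (by \cite[Thm.~1.5]{Semi}, which rests on \cite[Prop.~7.1.4]{AK}), so the surjection of the statement is well-defined. Using $C_{NC}(\cA)$, lift the K\"unneth projectors of the super-vector space $\overline{HP_\ast}(\cA)$ to noncommutative correspondences $\underline{\pi}^\pm_\cA$. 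Since $\overline{HP_\ast}(e)$ is a morphism of super-vector spaces it preserves the $\bbZ/2$-grading, and therefore commutes with $\pi^\pm_{\overline{HP_\ast}(\cA)}$; consequently $e$ commutes with $\underline{\pi}^\pm_\cA$ modulo $\mathrm{Ker}(\overline{HP_\ast})$, and the elements $\underline{\pi}^\pm_{(\cA,e)}:=e\circ\underline{\pi}^\pm_\cA\circ e$ furnish orthogonal idempotents in $R:=\End_{\NChow(k)_F/\mathrm{Ker}(\overline{HP_\ast})}((\cA,e))$ summing to $\id_{(\cA,e)}$. Under $\overline{HP_\ast}$ they correspond to the projectors onto the even and odd parts $V^\pm$ of $V:=\overline{HP_\ast}((\cA,e))$.

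Let $J\subset R$ denote the kernel of the surjection onto $\End_{\NChow(k)_F/\cN}((\cA,e))$. The central claim is that every $f\in J$ is nilpotent in $R$. By construction, $\overline{HP_\ast}$ induces an injective $F$-algebra homomorphism $R\hookrightarrow\End_K(V)$, so it suffices to prove that $\overline{HP_\ast}(f)$ is a nilpotent endomorphism of $V$. In characteristic zero, Newton's identities reduce this to showing that the ordinary trace $\mathrm{tr}(\overline{HP_\ast}(f)^n)$ vanishes for every $n\ge 1$. By Theorem~\ref{thm:HP}, $\overline{HP_\ast}$ is symmetric monoidal with values in $\mathrm{sVect}(K)$ and therefore intertwines the categorical trace on $\NChow(k)_F$ with the super-trace on $\mathrm{sVect}(K)$. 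Combined with \cite[Prop.~7.1.4]{AK}, which identifies $\cN$ with the radical of the categorical trace pairing (matching the intersection-number pairing of \S\ref{sub:Num}), this yields $\mathrm{str}(\overline{HP_\ast}(gf))=0$ for every $g\in R$. Taking $g=f^{n-1}\circ\underline{\pi}^+_{(\cA,e)}$ and using the cyclicity of the super-trace together with the fact that the super-trace coincides with the ordinary trace on the purely even summand $V^+$, one obtains $\mathrm{tr}(\overline{HP_\ast}(f^n)|_{V^+})=0$; the analogous choice $g=f^{n-1}\circ\underline{\pi}^-_{(\cA,e)}$, together with the fact that the super-trace equals minus the ordinary trace on $V^-$, gives $\mathrm{tr}(\overline{HP_\ast}(f^n)|_{V^-})=0$. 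Adding the two identities yields $\mathrm{tr}(\overline{HP_\ast}(f)^n)=0$, as required.

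Therefore every element of $J$ is nilpotent in $R$. Since $R$ is a finite-dimensional $F$-algebra (as $\End_{\NChow(k)_F}(\cA)\simeq K_0(\cA^\op\otimes_k^\bbL\cA)_F$ is finite-dimensional over $F$ for smooth and proper $\cA$, a fact already used implicitly in the Jannsen-style proof of Theorem~\ref{thm:semi}), any nil ideal of $R$ is automatically nilpotent, which is the desired conclusion. The main technical bridge is the identification of the intersection-number pairing defining $\cN$ with the super-trace read off $\overline{HP_\ast}$; once this and the K\"unneth lift supplied by $C_{NC}(\cA)$ are in place, the nilpotency of $J$ becomes a formal consequence of the characteristic-zero and finiteness hypotheses.
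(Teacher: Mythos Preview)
Your core argument---using $C_{NC}(\cA)$ to lift the K\"unneth projectors, then showing that the super-trace of $\overline{HP_\ast}(f^n\circ\underline{\pi}^\pm_{(\cA,e)})$ vanishes and deducing that the ordinary trace of $\overline{HP_\ast}(f)^n$ on each graded piece vanishes, hence $\overline{HP_\ast}(f)$ is nilpotent by the characteristic-zero trace criterion---is exactly the paper's strategy. The paper cites \cite[Cor.~4.4]{Semi} rather than \cite[Prop.~7.1.4]{AK} for the identification of the intersection pairing with the categorical trace, and phrases the trace vanishing via ``$\underline{X}^n\in\cN$'' rather than via the choice $g=f^{n-1}\circ\underline{\pi}^\pm$, but the substance is identical.

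There is, however, a genuine error in your final step. The claim that $K_0(\cA^\op\otimes_k^{\bbL}\cA)_F$ is finite-dimensional over $F$ for smooth and proper $\cA$ is false: already for $\cA=\cD_\perf^\dg(E)$ with $E$ an elliptic curve, $K_0(E\times E)_\bbQ\cong CH^\ast(E\times E)_\bbQ$ is infinite-dimensional. Nor is this what the proof of Theorem~\ref{thm:semi} uses; the finite-dimensionality there is for endomorphism algebras in $\NNum(k)_K$, \emph{after} passing to numerical equivalence. What is true, and what you should use instead, is that $R$ embeds via the faithful functor $\overline{HP_\ast}$ into $\End_{\mathrm{sVect}(K)}(V)\simeq\End_K(V^+)\times\End_K(V^-)$. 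When $K=F$ (the case $F\supseteq k$) this makes $R$ finite-dimensional over $F$ and your nil-to-nilpotent step goes through. When $K=k\supsetneq F$ this does not bound $\dim_F R$, but it does give a uniform bound $\max(\dim_K V^+,\dim_K V^-)$ on the nilpotency index of every element of $J$, whence Nagata--Higman (in characteristic zero) yields $J^N=0$. Note, incidentally, that the paper's own proof only establishes that each element of $J$ is nilpotent and does not explicitly pass from nil to nilpotent; for the sole application (idempotent lifting in Proposition~\ref{prop:auxiliar}) the nil property already suffices.
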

\begin{proof}
Let us consider first the case where $F$ is a field extension of $k$. By Theorem~\ref{thm:HP}, we have a $F$-linear symmetric monoidal functor
\begin{equation}\label{eq:aux1}
\overline{HP_\ast}: \NChow(k)_F \too \mathrm{sVect}(F)\,.
\end{equation}
The associated kernel $\mathrm{Ker}(\overline{HP_\ast})$ is a $\otimes$-ideal of $\NChow(k)_F$ and so we obtain an induced functor
\begin{equation}\label{eq:aux2}
\NChow(k)_F/\mathrm{Ker}(\overline{HP_\ast}) \too \mathrm{sVect}(F)
\end{equation}
which is not only $F$-linear and faithful but moreover symmetric monoidal. Recall from \S\ref{sub:Num} that the ideal $\cN$ is the largest $\otimes$-ideal of $\NChow(k)_F$ (distinct from the entire category). Hence, the induced functor
$$\NChow(k)_F/\mathrm{Ker}(\overline{HP_\ast}) \too \NChow(k)_F/\cN$$
is not only $F$-linear and symmetric monoidal but moreover full and (essentially) surjective. Now, observe that if by hypothesis the noncommutative standard conjecture $C_{NC}(\cA)$ holds for a smooth and proper dg category $\cA$, then the K\"unneth projectors
\begin{equation}\label{eq:Kunneth-proj}
\pi_{(\cA,e)}^{\pm}:  \overline{HP_\ast}((\cA,e)) \twoheadrightarrow \overline{HP_\ast}^{\pm}((\cA,e)) \hookrightarrow \overline{HP_\ast}((\cA,e))
\end{equation}
associated to a noncommutative Chow motive $(\cA,e)$ are also algebraic. Simply take for $\underline{\pi}^\pm_{(\cA,e)}$ the noncommutative correspondence $e\circ \underline{\pi}^\pm_\cA \circ e$. Let $\underline{X} \in \End_{\NChow(k)_F}((\cA,e))$ be a noncommutative correspondence. We need to show that if $\underline{X}$ becomes trivial in $\NChow(k)_F/\cN$, then it is nilpotent in $\NChow(k)_F/\mathrm{Ker}(\overline{HP_\ast})$. As explained above, the K{\"u}nneth projectors \eqref{eq:Kunneth-proj} can be written as $\pi^{\pm}_{(\cA,e)}=\overline{HP_\ast}(\underline{\pi}^{\pm}_{(\cA,e)})$, with $\underline{\pi}^{\pm}_{(\cA,e)} \in \End_{\NChow(k)_F}((\cA,e))$. If by hypothesis $\underline{X}$ becomes trivial in $\NChow(k)_F/\cN$, then by definition of $\cN$ the intersection numbers $\langle \underline{X} \cdot \underline{\pi}^{\pm}_{(\cA,e)}\rangle  \in K_0(k)_F \simeq F$ vanish. Moreover, since $\cN$ is a $\otimes$-ideal, the intersection numbers $\langle \underline{X}^n\cdot \underline{\pi}^{\pm}_{(\cA,e)}\rangle$ vanish also for any integer $n \geq 1$, where $\underline{X}^n$ stands for the $n^{\mathrm{th}}$-fold composition of $\underline{X}$. As proved in \cite[Corollary~4.4]{Semi}, the intersection numbers $\langle \underline{X}^n\cdot \underline{\pi}^{\pm}_{(\cA,e)}\rangle$ agree with the categorical trace of the noncommutative correspondences $\underline{X}^n \circ \underline{\pi}^\pm_{(\cA,e)}$. Since the above functor \eqref{eq:aux1} is symmetric monoidal we then conclude that the following traces
$$ \mathrm{tr}(\overline{HP_\ast}(\underline{X}^n \circ \underline{\pi}^\pm_{(\cA,e)})) =  \mathrm{tr}(\overline{HP_\ast}(\underline{X})^n \circ \pi^\pm_{(\cA,e)}) =  \mathrm{tr}(\overline{HP_\ast}^\pm(\underline{X})^n) \qquad n \geq 1$$
vanish. Recall that over a field of characteristic zero, a nilpotent linear map can be characterized by the fact that the trace of all its $n^{\mathrm{th}}$-fold compositions vanish. As a consequence, the $F$-linear transformations 
$$ \overline{HP_\ast}^\pm(\underline{X}):  \overline{HP_\ast}((\cA,e)) \twoheadrightarrow \overline{HP_\ast}^\pm((\cA,e)) \stackrel{\overline{HP_\ast}(\underline{X})}{\too} \overline{HP_\ast}^\pm((\cA,e)) \hookrightarrow \overline{HP_\ast}((\cA,e))$$
are nilpotent and so $\overline{HP_\ast}(\underline{X})$ is also nilpotent, Finally, since the functor \eqref{eq:aux2} is faithful, we conclude that the noncommutative correspondence $\underline{X}$ becomes nilpotent in $\NChow(k)_F/\mathrm{Ker}(\overline{HP_\ast})$. This achieves the proof.

The case where $k$ is a field extension of $F$ is similar. The only difference is that the functor \eqref{eq:aux2} take values in $\mathrm{sVect}(k)$ instead of $\mathrm{sVect}(F)$. Since by hypothesis $k$ is a field of characteristic zero, the same reasoning applies and so the proof is finished.
\end{proof}

\begin{proposition}\label{prop:auxiliar}
Assume that $F$ is a field extension of $k$ or vice-versa, and that the noncommutative standard conjecture $C_{NC}(\cA)$ holds for every smooth and proper dg category $\cA$. Then, the induced functor
\begin{equation}\label{eq:induced2}
(\NChow(k)_F/\mathrm{Ker}(\overline{HP_\ast}))^\natural \too \NNum(k)_F
\end{equation}
is full, conservative, and (essentially) surjective.
\end{proposition}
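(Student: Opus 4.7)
Set $\cC := \NChow(k)_F/\mathrm{Ker}(\overline{HP_\ast})$ and $\cD := \NChow(k)_F/\cN$, so that $\cD^\natural = \NNum(k)_F$. Since $\overline{HP_\ast}(\underline{k})\neq 0$, the ideal $\mathrm{Ker}(\overline{HP_\ast})$ is a proper $\otimes$-ideal, hence contained in the maximal such ideal $\cN$, producing a full $F$-linear symmetric monoidal functor $P:\cC\to\cD$ which is the identity on objects; the functor of the proposition is the induced pseudo-abelian envelope $P^\natural:\cC^\natural\to\cD^\natural$. The key input is Proposition~\ref{prop:aux2}: under the assumption that $C_{NC}(\cA)$ holds for every smooth and proper dg category $\cA$, the kernel of the surjection $\End_\cC((\cA,e))\twoheadrightarrow\End_\cD((\cA,e))$ is a nilpotent ideal for every object. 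The plan is to combine this with two standard facts---idempotents lift modulo nilpotent ideals, and $1-x$ is a unit whenever $x$ is nilpotent---to deduce the three required properties in turn.

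For essential surjectivity, a typical object of $\cD^\natural$ has the form $((\cA,e),\bar f)$ with $\bar f^2=\bar f\in\End_\cD((\cA,e))$; lifting $\bar f$ to an idempotent $\tilde f\in\End_\cC((\cA,e))$ produces the preimage $((\cA,e),\tilde f)\in\cC^\natural$. For fullness, any morphism in $\cD^\natural$ can be written $\bar f_2\circ\bar g\circ\bar f_1$, and is the image of $\tilde f_2\circ\tilde g\circ\tilde f_1$, where $\tilde g$ is a lift of $\bar g$ obtained from fullness of $P$ and the $\tilde f_i$ are chosen as above.

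For conservativity, let $g:M\to N$ in $\cC^\natural$ become invertible in $\cD^\natural$ with inverse $h$, and use fullness of $P^\natural$ to lift $h$ to some $\tilde h:N\to M$. Writing $M=((\cA,e),\tilde f)$, the endomorphism ring of $M$ in $\cC^\natural$ is the corner $\tilde f\,\End_\cC((\cA,e))\,\tilde f$, so the kernel of $\End_{\cC^\natural}(M)\twoheadrightarrow\End_{\cD^\natural}(P^\natural M)$ remains a nilpotent ideal. Consequently $\tilde h\circ g-\id_M$ and $g\circ\tilde h-\id_N$ are nilpotent, both composites are invertible, and $g$ is an isomorphism. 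No serious obstacle arises: the hard content---algebraicity of the K\"unneth projectors, translated by Proposition~\ref{prop:aux2} into ring-theoretic nilpotency---is already in hand, and the remaining work is the routine bookkeeping of transferring this across the pseudo-abelian envelope.
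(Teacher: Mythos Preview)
Your proof is correct and follows essentially the same approach as the paper: both use Proposition~\ref{prop:aux2} to obtain nilpotent kernels on endomorphism rings, then lift idempotents to get essential surjectivity and fullness, and use the ``$1-\text{nilpotent}$ is invertible'' argument for conservativity. The paper is more terse, citing \cite[Prop.~3.2]{Ivorra} and \cite[Lemma~3.1]{Ivorra} for these two standard facts rather than spelling them out as you have done.
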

\begin{proof}
Since we have an inclusion $\mathrm{Ker}(\overline{HP_\ast})\subset \cN$ of $\otimes$-ideals, the induced functor 
$$ \NChow(k)_F/\mathrm{Ker}(\overline{HP_\ast}) \twoheadrightarrow \NChow(k)_F/\cN$$
is clearly full and (essentially) surjective. As explained in \cite[Prop.~3.2]{Ivorra}, idempotent elements can always be lifted along surjective $F$-linear homomorphisms with a nilpotent kernel. Hence, by Proposition~\ref{prop:aux2}, we conclude that the functor \eqref{eq:induced2} is also full and (essentially) surjective. The fact that it is moreover conservative follows from \cite[Lemma~3.1]{Ivorra}.
\end{proof}
In order to simplify the proof of Theorem~\ref{thm:main2}, we now introduce the following general result.
\begin{proposition}\label{prop:new}
Let $F$ be a field of characteristic zero, $K$ a field extension of $F$, and two $F$-linear symmetric monoidal functors
\begin{eqnarray*}
H: \cC \too \mathrm{sVect}(K) && P: \cC \too \cD\,,
\end{eqnarray*}
where $\mathrm{sVect}(K)$ denotes the category of finite dimensional super $K$-vector spaces. Assume that $H$ is faithful, that $P$ is (essentially) surjective, and that for every object $N \in \cC$, the K\"unneth projectors $\pi^\pm_N:H(N) \twoheadrightarrow H^\pm(N) \hookrightarrow H(N)$ can be written as $\pi^\pm_N=H(\underline{\pi}^\pm_N)$ with $\underline{\pi}^\pm_N \in \End_\cC(N)$. Then, by modifying the symmetry isomorphism constraints of $\cC$ and $\cD$, we obtain new symmetric monoidal categories $\cC^\dagger$ and $\cD^\dagger$ and (composed) $F$-linear symmetric monoidal functors
\begin{eqnarray*}
\cC^\dagger \stackrel{H}{\too} \mathrm{sVect}(K) \too \mathrm{Vect}(K) && P: \cC^\dagger \too \cD^\dagger\,,
\end{eqnarray*}
where $\mathrm{sVect}(K) \to \mathrm{Vect}(K)$ is the forgetful functor.
\end{proposition}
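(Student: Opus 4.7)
The plan is to adapt the classical sign-twisting trick of Deligne--Jannsen to this abstract setting. The key observation is that the supercommutativity of $\mathrm{sVect}(K)$ differs from the naive flip on $K$-vector spaces precisely by a factor of $-1$ on the odd-tensor-odd component, and the hypothesis provides algebraic lifts $\underline{\pi}^\pm_N$ of the parity projectors. Composing the original symmetry of $\cC$ with a suitable endomorphism that negates the odd-tensor-odd component will kill the Koszul sign after post-composing with the forgetful functor $\mathrm{sVect}(K) \to \mathrm{Vect}(K)$.

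First I would exploit the faithfulness of $H$ to lift the needed identities from $\mathrm{sVect}(K)$ to $\cC$. The relations $\pi^+_N + \pi^-_N = \id_{H(N)}$, $\pi^+_N \circ \pi^-_N = 0$, $(\pi^\pm_N)^2 = \pi^\pm_N$, together with the K\"unneth-type formula $\pi^+_{M\otimes N} = \pi^+_M \otimes \pi^+_N + \pi^-_M \otimes \pi^-_N$ and its odd analogue, all hold in $\mathrm{sVect}(K)$ and therefore lift via faithfulness to the corresponding relations between the $\underline{\pi}^\pm_N \in \End_\cC(N)$. I would then define $\cC^\dagger$ to have the same underlying monoidal category as $\cC$ but with modified symmetry
$$
c^\dagger_{M,N} \,:=\, c_{M,N} \circ \bigl( \id_{M \otimes N} - 2\, \underline{\pi}^-_M \otimes \underline{\pi}^-_N \bigr).
$$
Using the lifted identities, the axioms of a symmetric monoidal structure (hexagon and $c^\dagger_{N,M} \circ c^\dagger_{M,N} = \id$) follow from a direct calculation. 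A case-by-case computation on the four parity components shows that $H(c^\dagger_{M,N})$, viewed as a $K$-linear map, is the naive flip of the underlying vector spaces, so the composition $\cC^\dagger \to \mathrm{sVect}(K) \to \mathrm{Vect}(K)$ is $F$-linear symmetric monoidal.

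Finally, $\cD^\dagger$ is defined by transport along $P$: for $D_1, D_2 \in \cD$, choose preimages $N_i \in \cC$ with $P(N_i) \simeq D_i$ (possible by essential surjectivity) and let $c^\dagger_{D_1, D_2}$ be the transport of $P(c^\dagger_{N_1, N_2})$ along the chosen isomorphisms. Tautologically, $P \colon \cC^\dagger \to \cD^\dagger$ becomes symmetric monoidal. The main obstacle I foresee is establishing the well-definedness of $c^\dagger_{D_1, D_2}$: since the $\underline{\pi}^\pm_N$ are not assumed natural in $N$ and preimages of $D$ under $P$ are not unique, one needs to check that different choices yield the same transported endomorphism of $D_1 \otimes D_2$. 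This should follow from the uniqueness of $\underline{\pi}^\pm_N$ as lifts of the parity projectors (a consequence of the faithfulness of $H$) combined with the fact that $P$ is symmetric monoidal, which pushes the identities from the previous step down to $\cD$ and rigidifies the construction.
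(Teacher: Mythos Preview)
Your approach is essentially the paper's: both invoke the Deligne--Jannsen sign twist to construct $\cC^\dagger$ (the paper by citing Andr\'e--Kahn \cite[Prop.~8.3.1]{AK}, you by writing it out and using faithfulness of $H$ to lift the needed idempotent identities), and both push the modification down to $\cD$ via the essential surjectivity of $P$.

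Two remarks. First, your formula $c^\dagger_{M,N}=c_{M,N}\circ(\id - 2\,\underline{\pi}^-_M\otimes\underline{\pi}^-_N)$ is the one that actually makes the composite $\cC^\dagger\to\mathrm{sVect}(K)\to\mathrm{Vect}(K)$ intertwine $c^\dagger$ with the naive flip; the paper records instead $c^\dagger=c\circ(e_{N_1}\otimes e_{N_2})$ with $e_N=2\underline{\pi}^+_N-\id_N$, which is a different involution (it changes sign on the mixed-parity summands rather than on the odd--odd one). Since the paper defers the construction to \cite{AK}, this is only a transcription slip there, and your version is the correct specialization of the $\bbZ$-graded formula $\sum_{i,j}(-1)^{ij}\pi^i\otimes\pi^j$ to the $\bbZ/2$-graded setting.

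Second, you are more careful than the paper about well-definedness of $c^\dagger$ on $\cD$; the paper simply sets $e_{P(N)}:=P(e_N)$ without comment. Your argument via uniqueness of $\underline{\pi}^\pm_N$ (from faithfulness of $H$) handles independence from the choice of lift for a \emph{fixed} $N$, but independence from the choice of preimage $N$ of a given $D$ is not forced by essential surjectivity alone. In every application in the paper $P$ is also full (indeed full with nilpotent kernel, cf.~Proposition~\ref{prop:auxiliar}), and that is what makes the transport unambiguous; you may want to note this extra input explicitly.
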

\begin{proof}
By applying \cite[Prop.~8.3.1]{AK} to the functor $H$ we obtain the symmetric monoidal category $\cC^\dagger$ and the $F$-linear symmetric monoidal (composed) functor $\cC^\dagger \stackrel{H}{\to}\mathrm{sVect}(K) \to \mathrm{Vect}(K)$. As explained in {\em loc.~cit.}, the new symmetry isomorphism constraints are given by 
\begin{equation}\label{eq:modif}
c^\dagger_{N_1,N_2}:= c_{N_1,N_2} \circ (e_{N_1} \otimes e_{N_2})\,,
\end{equation} 
where $e_N$ is the endomorphism $2\cdot \underline{\pi}^+_N-\id_N$ of $N$. Since by hypothesis the functor $P$ is (essentially) surjective, we can then use the image of the endomorphisms $e_N$ to modify the symmetry isomorphism constraints of $\cD$ as in the above formula \eqref{eq:modif}. We obtain in this way the symmetric monoidal category $\cD^\dagger$ and the $F$-linear symmetric monoidal functor $P: \cC^\dagger \to \cD^\dagger$.
\end{proof}
\subsection*{Proof of Theorem~\ref{thm:main2}}
Note first that since by hypothesis $k$ is of characteristic zero, the (non-trivial) field $F$ is also of characteristic zero. As explained in (the proofs of) Propositions~\ref{prop:aux2} and \ref{prop:auxiliar}, we have $F$-linear symmetric monoidal functors
\begin{eqnarray}
\overline{HP_\ast}: (\NChow(k)_F/\mathrm{Ker}(\overline{HP_\ast}))^\natural \too \mathrm{sVect}(K) \label{eq:Func1}\\
(\NChow(k)_F/\mathrm{Ker}(\overline{HP_\ast}))^\natural \too \NNum(k)_F \,, \label{eq:Func2}
\end{eqnarray}
with \eqref{eq:Func1} faithful and \eqref{eq:Func2} (essentially) surjective. The field $K$ is equal to $F$ when $F$ is a field extension of $k$ and equal to $k$ when $k$ is a field extension of $F$. In both cases, $K$ is a field extension of $F$. By hypothesis the noncommutative standard conjecture $C_{NC}(\cA)$ holds for every smooth and proper dg category $\cA$, and so we can apply the above general Proposition~\ref{prop:new} to the functors \eqref{eq:Func1} and \eqref{eq:Func2}. We obtain then the following diagram
\begin{equation}\label{eq:diagram3}
\xymatrix@C=3em@R=1.5em{
(\NChow(k)_F/\mathrm{Ker}(\overline{HP_\ast}))^{\natural,\dagger} \ar[r]^-{\overline{HP_\ast}} \ar[d] & \mathrm{sVect}(K) \ar[r] & \mathrm{Vect}(K)\\
\NNum^\dagger(k)_F & & \,.
}
\end{equation}
Now, recall that by construction the category $\NNum^\dagger(k)_F$ is $F$-linear, additive, rigid symmetric monoidal, and such that the endomorphisms of its $\otimes$-unit is the field $F$. By Theorem~\ref{thm:semi} it is moreover abelian (semi-simple). Hence, in order to prove that it is Tannakian, we can make use of Deligne's intrinsic characterization; see Theorem~\ref{TannTrpos}. Concretely, we need to show that the rank $\mathrm{rk}(N)$ of every noncommutative numerical motive $N \in \NNum^\dagger(k)_F$ is a non-negative integer. Since the vertical functor of diagram \eqref{eq:diagram3} is (essentially) surjective and restricts to an isomorphism between the endomorphisms of the corresponding $\otimes$-units, we conclude that $\mathrm{rk}(N)=\mathrm{rk}(\widetilde{N})$ for any lift $\widetilde{N} \in (\NChow(k)_F/\mathrm{Ker}(\overline{HP_\ast}))^{\natural,\dagger}$ of $N$. Moreover, since the (composed) horizontal functor of diagram \eqref{eq:diagram3} is faithful, we have $\mathrm{rk}(\widetilde{N})= \mathrm{rk}(\overline{HP_\ast}(\widetilde{N}))$. Finally, since in the symmetric monoidal category $\mathrm{Vect}(K)$ the rank $\mathrm{rk}(\overline{HP_\ast}(\widetilde{N}))$ can be written as $\mathrm{dim}(\overline{HP_\ast}^+(\widetilde{N})) + \mathrm{dim}(\overline{HP_\ast}^-(\widetilde{N}))$, we conclude that $\mathrm{rk}(N)$ is a non-negative integer. This achieves the proof. 

\section{Noncommutative homological motives}\label{sec:homological}
In this section we start by recalling the standard conjecture $D$. Then, we prove Theorems \ref{thm:equality1} and \ref{thm:neutral} and along the way introduced the category of {\em noncommutative homological motives}. In what follows, $Z$ will be a smooth projective $k$-scheme over a field of characteristic zero.

Recall from \cite[\S3.2]{Andre} the definition of the $F$-vector spaces $\cZ_{\mathrm{hom}}^\ast(Z)_F$ and $\cZ_{\mathrm{num}}^\ast(Z)_F$ of algebraic cycles (of arbitrary codimension), where the homological equivalence relation is taken with respect to de Rham cohomology. 

\smallbreak

{\bf Standard conjecture $D(Z)$ (see \cite[\S5.4.1.1]{Andre}):} {\it The following equality holds
$$ \cZ^\ast_{\mathrm{hom}}(Z)_F=\cZ^\ast_{\mathrm{num}}(Z)_F\,.$$
}
Now, recall from Theorem~\ref{thm:HP} that periodic cyclic homology gives rise to a well-defined $F$-linear symmetric monoidal functor
\begin{equation}\label{eq:func3}
\overline{HP_\ast}: \NChow(k)_F \too \mathrm{sVect}(K)\,.
\end{equation}
The field $K$ is equal to $F$ when $F$ is a field extension of $k$ and equal to $k$ when $k$ is a field extension of $F$. Given a smooth and proper dg category $\cA$, we then have an induced $F$-linear homomorphism
$$ K_0(\cA)_F = \Hom_{\NChow(k)_F}(\underline{k},\cA) \stackrel{\overline{HP_\ast}}{\too} \Hom_{\mathrm{sVect}(K)}(\overline{HP_\ast}(\underline{k}),\overline{HP_\ast}(\cA))\,.$$
The associated kernel gives rise to a well-defined equivalence relation on $K_0(\cA)_F$ that we denote by $\sim_{\mathrm{hom}}$. On the other hand, as explained in \S\ref{sub:Num}, the noncommutative correspondences which are numerically equivalent to zero form a $F$-linear subspace of $K_0(\cA)_F=\Hom_{\NChow(k)_F}(k,\cA)$ and hence give rise to an equivalence relation on $K_0(\cA)_F$ that we will denote by $\sim_{\mathrm{num}}$.
\subsection*{Proof of Theorem~\ref{thm:equality1}}
As explained in the proof of Theorem~\ref{thm:equality}, the composition
\begin{equation}\label{eq:composed}
\Chow(k)_F \stackrel{\tau}{\too} \Chow(k)_F/_{\!\!-\otimes \bbQ(1)} \stackrel{R}{\too} \NChow(k)_F \stackrel{\overline{HP_\ast}}{\too} \mathrm{sVect}(k)
\end{equation}
agrees with the functor $s\overline{H^\ast_{dR}}$ (see \eqref{eq:sHdR}) associated to de Rham cohomology. Hence, its kernel ${\it Ker}$ agrees with the kernel of the symmetric monoidal functor
$$ \overline{H^\ast_{dR}}: \Chow(k)_F \too \mathrm{GrVect}(k)\,.$$
From \eqref{eq:diagram-new} we then obtain the following commutative diagram
$$
\xymatrix@C=1.5em@R=1.5em{
\Chow(k)_F/{\it Ker} \ar[d] \ar[r]^-\Psi & (\Chow(k)_F/_{\!\!-\otimes \bbQ(1)})/\mathrm{Ker}(\overline{HP_\ast} \circ R) \ar[d] \ar[r]^-\Phi & \NChow(k)_F/\mathrm{Ker}(\overline{HP_\ast}) \ar[d] \\
\Num(k)_F \ar[r]_-{\tau} & \Num(k)_F/_{\!\!-\otimes \bbQ(1)} \ar[r]_-{R_\cN} & \NNum(k)_F\,,
}
$$
where {\it Ker} denotes the kernel of the composed functor \eqref{eq:composed}.
Now, by Lemma~\ref{lem:orbit-kernel} (with $\cC=\Chow(k)_F$, $\cO=\bbQ(1)$, and $H=\overline{HP_\ast}\circ R$), we observe that the functor $\Psi$ admits the following factorization
$$ \Chow(k)_F/{\it Ker} \stackrel{\tau}{\too} (\Chow(k)_F/{\it Ker})/_{\!\!-\otimes \bbQ(1)} \stackrel{\Omega}{\too} (\Chow(k)_F/_{\!\!-\otimes \bbQ(1)})/\mathrm{Ker}(\overline{HP_\ast} \circ R)\,.$$
As explained in the proof of Theorem~\ref{thm:equality}, the image of $Z$ under the composed functor $\Phi \circ \Psi$ is naturally isomorphic to the dg category $\cD_\perf^\dg(Z)$. Similarly, the image of the affine $k$-scheme $\mathrm{spec}(k)$ is naturally isomorphic to $\cD_\perf^\dg(\mathrm{spec}(k))\simeq k$. We can then consider the following induced commutative square
$$
\xymatrix{
(\Chow(k)_F/{\it Ker})/_{\!\!-\otimes \bbQ(1)} \ar[r] \ar[d] \ar[r]^-{\Phi \circ \Omega} & \NChow(k)_F/\mathrm{Ker}(\overline{HP_\ast}) \ar[d] \\
\Num(k)_F/_{\!\!-\otimes \bbQ(1)} \ar[r]_-{R_{\cN}} & \NNum(k)_F
}
$$
and the associated commutative diagram
$$
\xymatrix{
\Hom_{(\Chow(k)_F/\mathrm{Ker})/_{\!\!-\otimes \bbQ(1)}}(\mathrm{spec}(k),Z) \ar[d] \ar[r] & \Hom_{\NChow(k)_F/\mathrm{Ker}(\overline{HP_\ast})}(k,\cD_\perf^\dg(Z)) \ar[d] \\
\Hom_{\Num(k)_F/_{\!\!-\otimes \bbQ(1)}}(\mathrm{spec}(k),Z) \ar[r] & \Hom_{\NNum(k)_F}(k,\cD_\perf^\dg(Z))\,.
}
$$
By combining the above arguments with the constructions of the categories $\Chow(k)_F$, $\Num(k)_F$, $\NChow(k)_F$ and $\NNum(k)_F$, we then observe that the preceding commutative square identifies with 
\begin{equation}\label{eq:diag-key}
\xymatrix{
\cZ^\ast_{\mathrm{hom}}(Z)_F \ar@{->>}[d] \ar@{->>}[r] & K_0(\cD_\perf^\dg(Z))_F/\!\!\sim_{\mathrm{hom}} \ar[d] \\
\cZ^\ast_{\mathrm{num}}(Z)_F \ar[r]_-{\sim} & K_0(\cD_\perf^\dg(Z))_F/\!\!\sim_{\mathrm{num}}\,.
}
\end{equation}
Since the functor $R_{\cN}$ is fully-faithful, the lower horizontal map is an isomorphism. Recall from Lemma~\ref{lem:orbit-kernel}, that the functor $\Omega$ is full. Hence, since the induced functor $\Phi$ is fully-faithful, the upper horizontal map in \eqref{eq:diag-key} is surjective. Now, suppose that the standard conjecture $D(Z)$ holds, \ie that the left vertical map of diagram \eqref{eq:diag-key} is an isomorphism. The commutativity of the diagram combined with all the above facts implies that the right vertical map in \eqref{eq:diag-key} is injective. Since by construction it is already surjective, we conclude that it is an isomorphism. This is precisely the statement of the noncommutative standard conjecture $D_{NC}(\cD_\perf^\dg(Z))$ and so the proof is finished.

\medbreak

In analogy with the commutative world we introduce the category of noncommutative homological motives.

\begin{definition}\label{def:homological}
Let $k$ and $F$ be fields with $F$ a field extension of $k$ or vice-versa. The category $\NHom(k)_F$ of {\em noncommutative homological motives} is the pseudo-abelian envelope of the quotient category $\NChow(k)_F/\mathrm{Ker}(\overline{HP_\ast})$, where $\mathrm{Ker}(\overline{HP_\ast})$ is the kernel of the functor \eqref{eq:func3}.
\end{definition}
As explained in \S\ref{sub:Num}, the ideal $\cN$ is the largest $\otimes$-ideal of $\NChow(k)_F$ (distinct from the entire category). Hence, the inclusion of $\otimes$-ideals $\mathrm{Ker}(\overline{HP_\ast}) \subset \cN$ gives rise to an induced functor
\begin{equation}\label{eq:HN}
\NHom(k)_F \too \NNum(k)_F
\end{equation}
\begin{proposition}\label{prop:new-new}
Let $k$ be a field of characteristic zero. Then, if the noncommutative standard conjectures $C_{NC}(\cA)$ and $D_{NC}(\cA)$ hold for every smooth and proper dg category $\cA$, the induced functor \eqref{eq:HN} is an equivalence of categories.
\end{proposition}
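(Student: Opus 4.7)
The plan is to complement Proposition~\ref{prop:auxiliar} with the missing faithfulness. Indeed, by that proposition (whose hypothesis is the noncommutative standard conjecture $C_{NC}$), the functor $\NHom(k)_F \to \NNum(k)_F$ of \eqref{eq:HN} is already full, conservative, and (essentially) surjective; adjoining faithfulness will then yield an equivalence of categories.

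To prove faithfulness, it suffices to show the equality of $\otimes$-ideals $\mathrm{Ker}(\overline{HP_\ast}) = \cN$ in $\NChow(k)_F$, since the pseudo-abelian envelope construction preserves equality of Hom-sets. The inclusion $\mathrm{Ker}(\overline{HP_\ast}) \subset \cN$ is automatic, because $\cN$ is the largest proper $\otimes$-ideal of $\NChow(k)_F$ (see \S\ref{sub:Num}); only the reverse inclusion is at stake. The hypothesis $D_{NC}(\cD)$ provides precisely this at the level of Hom-sets out of $\underline{k}$: it states that the equivalence relations $\sim_{\mathrm{hom}}$ and $\sim_{\mathrm{num}}$ coincide on $K_0(\cD)_F = \Hom_{\NChow(k)_F}(\underline{k}, \cD)$ for every smooth and proper dg category $\cD$, or equivalently that $\mathrm{Ker}(\overline{HP_\ast})(\underline{k}, \cD) = \cN(\underline{k}, \cD)$.

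The remaining step is to bootstrap from Hom-sets out of $\underline{k}$ to all Hom-sets. I would exploit the rigidity of $\NChow(k)_F$: by \S\ref{sub:smooth}, every smooth and proper dg category $\cA$ is dualizable with dual $\cA^\op$, yielding a natural isomorphism $\Hom_{\NChow(k)_F}(\cA, \cB) \simeq \Hom_{\NChow(k)_F}(\underline{k}, \cA^\op \otimes_k^\bbL \cB)$ that is compatible with the symmetric monoidal functor $\overline{HP_\ast}$ and with the $\otimes$-ideal $\cN$. Since the class of smooth and proper dg categories is stable under $\otimes_k^\bbL$ and $^\op$, applying $D_{NC}$ to $\cA^\op \otimes_k^\bbL \cB$ yields the desired equality of Hom-sets between $\cA$ and $\cB$. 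For motives of shape $(\cA, e)$ and $(\cB, e')$, the corresponding Hom-sets are obtained by idempotent truncation, an operation that commutes with passage to the quotient by either $\otimes$-ideal; the same applies to the pseudo-abelian envelope. The main subtlety lies in the clean bookkeeping of this rigidity-based bootstrapping, but it is entirely formal once one recalls that both $\mathrm{Ker}(\overline{HP_\ast})$ and $\cN$ are $\otimes$-ideals in the rigid symmetric monoidal category $\NChow(k)_F$.
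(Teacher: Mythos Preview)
Your proposal is correct and follows essentially the same approach as the paper: both invoke Proposition~\ref{prop:auxiliar} for fullness and essential surjectivity, then establish faithfulness by using rigidity to identify $\Hom_{\NChow(k)_F}(\cA,\cB)$ with $\Hom_{\NChow(k)_F}(\underline{k},\cA^\op\otimes_k\cB)$ and applying $D_{NC}(\cA^\op\otimes_k\cB)$. The only cosmetic difference is that you phrase the argument as proving the equality of $\otimes$-ideals $\mathrm{Ker}(\overline{HP_\ast})=\cN$, whereas the paper works directly with the induced map on Hom-sets; the content is identical.
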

\begin{proof}
By Proposition~\ref{prop:auxiliar} the above functor \eqref{eq:HN} is full and (essentially) surjective. It remains then to show that it is faithful. Given noncommutative Chow motives $(\cA,e)$ and $(\cB,e')$ (see \S\ref{sub:Chow}) we need to show that the induced $F$-linear homomorphism
$$\Hom_{\NChow(k)_F/\mathrm{Ker}(\overline{HP_\ast})}((\cA,e),(\cB,e')) \too \Hom_{\NNum(k)_F}((\cA,e),(\cB,e'))$$
is faithful. For this, it suffices to show that the induced $F$-linear homomorphism
\begin{equation}\label{eq:induced7}
\Hom_{\NChow(k)_F/\mathrm{Ker}(\overline{HP_\ast})}(\cA,\cB) \too \Hom_{\NNum(k)_F}(\cA,\cB)
\end{equation}
is faithful. The categorical dual of $\cA$ is its opposite dg category $\cA^\op$, and so by adjunction, \eqref{eq:induced7} identifies with the homomorphism
$$\Hom_{\NChow(k)_F/\mathrm{Ker}(\overline{HP_\ast})}(\underline{k}, \cA^\op \otimes_k \cB) \too \Hom_{\NNum(k)_F}(\underline{k}, \cA^\op \otimes_k \cB)\,.
$$
By the definition of the categories $\NChow(k)_F$ and $\NNum(k)_F$, we observe that the above homomorphism identifies with the canonical homomorphism
\begin{equation}\label{eq:iso-final}
K_0(\cA^\op \otimes_k \cB)_F/\!\!\sim_{\mathrm{hom}} \too K_0(\cA^\op \otimes_k \cB)_F/\!\!\sim_{\mathrm{num}}\,.
\end{equation}
Finally, since by hypothesis the noncommutative standard conjecture $D_{NC}(\cA^\op\otimes \cB)$ holds, \ie \eqref{eq:iso-final} is an isomorphism, we conclude that the homomorphism \eqref{eq:induced7} is faithful. This achieves the proof.
\end{proof}
\subsection*{Proof of Theorem~\ref{thm:neutral}}
Recall from the proof of Theorem~\ref{thm:main2} the construction of the following diagram
$$
\xymatrix@C=3em@R=1.5em{
\NHom^\dagger(k)_F \ar[r]^-{\overline{HP_\ast}} \ar[d] & \mathrm{sVect}(F)  \ar[r] & \mathrm{Vect}(F) \\
\NNum^\dagger(k)_F & &\,.
}
$$
Proposition~\ref{prop:new-new} implies that the vertical functor in the above diagram is an equivalence of categories. As a consequence, we obtain an exact faithful $\otimes$-functor
\begin{equation}\label{eq:functor}
\overline{HP_\ast}: \NNum^\dagger(k)_F \too \mathrm{Vect}(F)\,.
\end{equation} 
Recall from Theorem~\ref{thm:main2} that the category $\NNum^\dagger(k)_F$ is Tannakian. The above functor \eqref{eq:functor} allows us to conclude that $\NNum^\dagger(k)_F$ is moreover neutral and so the proof is finished.
\section{Motivic Galois groups}\label{sec:Galois}
In this section we prove Theorem~\ref{thm:main3}. At this point the reader is invited to consult Appendix~\ref{appendix} as we will make full use of all its notions and results.
\begin{proposition}\label{prop:technical1}
Let $F$ be a field of characteristic zero and $\cT=(\cC,w,T)$ a neutral Tate triple.
Let us denote by $\cS$ the full neutral Tannakian subcategory of $\cC$ generated by the Tate object $T$. Then, the pseudo-abelian envelope of the orbit category $\cC/_{\!\!-\otimes T}$ (see Appendix~\ref{app:orbit}) is a neutral Tannakian category and the sequence of exact $\otimes$-functors $\cS \subset \cC \to (\cC/_{\!\!-\otimes T})^\natural$ induces a group scheme isomorphism
$$ \mathrm{Gal}((\cC/_{\!\!-\otimes T})^\natural)  \stackrel{\sim}{\too} \mathrm{Ker}\left(t:  \mathrm{Gal}(\cC) \twoheadrightarrow \bbG_m \right) \,. $$
\end{proposition}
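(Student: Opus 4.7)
The plan is to sidestep a direct verification that $(\cC/_{\!\!-\otimes T})^\natural$ is Tannakian, by identifying it with $\Rep_F(\mathrm{Ker}(t))$ via an explicit $\otimes$-equivalence; once this identification is in place, both the Tannakian structure on $(\cC/_{\!\!-\otimes T})^\natural$ and the asserted Galois group isomorphism follow automatically from Tannakian duality.

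I would proceed as follows. First, since $\cS$ is generated by the $\otimes$-invertible Tate object $T$, the given fiber functor restricts to a $\otimes$-equivalence $\cS \simeq \Rep_F(\bbG_m)$ sending $T$ to the standard character; Tannakian duality applied to $\cS \hookrightarrow \cC$ then furnishes the surjection $t:\mathrm{Gal}(\cC)\twoheadrightarrow \bbG_m$. Writing $H:=\mathrm{Ker}(t)$, consider the restriction functor $\Phi:\cC\simeq\Rep_F(\mathrm{Gal}(\cC))\to\Rep_F(H)$. Because $H$ acts trivially on $T$, there is a canonical isomorphism $\Phi(T)\simeq\mathbf{1}$, hence natural isomorphisms $\Phi(X\otimes T)\simeq\Phi(X)$. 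By the universal property of orbit categories (Appendix~\ref{app:orbit}) and the pseudo-abelianness of $\Rep_F(H)$, $\Phi$ factors as
$$\cC \too (\cC/_{\!\!-\otimes T})^\natural \stackrel{\overline\Phi}{\too} \Rep_F(H).$$

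The crux of the argument is then to show that $\overline\Phi$ is a $\otimes$-equivalence. Full faithfulness should follow from Frobenius reciprocity combined with the decomposition $F[\bbG_m]=\bigoplus_{n\in\bbZ}T^n$ as a $\mathrm{Gal}(\cC)$-representation, which yields
$$\Hom_H(\Phi X,\Phi Y)=\bigoplus_{n\in\bbZ}\Hom_{\mathrm{Gal}(\cC)}(X,Y\otimes T^n),$$
precisely the formula defining morphisms in $\cC/_{\!\!-\otimes T}$. Essential surjectivity is handled by the central cocharacter $w$: it splits the extension $1\to H\to \mathrm{Gal}(\cC)\to\bbG_m\to 1$ centrally, so every finite-dimensional $H$-representation extends to a $\mathrm{Gal}(\cC)$-representation by declaring it of weight zero. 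Once $\overline\Phi$ is an equivalence, $(\cC/_{\!\!-\otimes T})^\natural$ automatically becomes neutral Tannakian, and Tannakian duality identifies $\mathrm{Gal}((\cC/_{\!\!-\otimes T})^\natural)$ with $H=\mathrm{Ker}(t)$; by construction this is the map induced by the exhibited sequence $\cS\subset\cC\to(\cC/_{\!\!-\otimes T})^\natural$.

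The main obstacle will be the full-faithfulness verification, which rests on two subtleties: the weight-space decomposition of Hom-spaces furnished by the central cocharacter $w$ (an ingredient guaranteed by the Tate triple axioms), and the fact that Hom-spaces in the orbit category truly assemble as the indicated direct sum rather than as a filtration or quotient. The latter point requires appealing to the interaction of the orbit construction with additive $\otimes$-functors recorded in Appendix~\ref{app:orbit}; once this combinatorial step is pinned down, the remaining ingredients---existence of $t$, triviality of $\Phi(T)$, and essential surjectivity---become formal consequences of the Tate triple axioms.
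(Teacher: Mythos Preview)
Your overall strategy coincides with the paper's: both arguments reduce to exhibiting a $\otimes$-equivalence $(\cC/_{\!\!-\otimes T})^\natural \simeq \Rep_F(H)$ for $H=\mathrm{Ker}(t)$, obtained by factoring the restriction functor $\cC\simeq\Rep_F(\mathrm{Gal}(\cC))\to\Rep_F(H)$ through the orbit category. Your Frobenius reciprocity computation of $\Hom_H(\Phi X,\Phi Y)$ is essentially the same as the paper's Hom computation, which is carried out via Milne's explicit description of $\Rep_F(H)$ as the quotient category $\cC/\omega_0$ (Definition~\ref{quotTann}); both routes produce the formula $\bigoplus_{j\in\bbZ}\Hom_\cC(X,Y\otimes T^{\otimes j})$.

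There is, however, a genuine gap in your essential surjectivity step. You assert that the central cocharacter $w$ splits the extension $1\to H\to\mathrm{Gal}(\cC)\to\bbG_m\to 1$, but the Tate triple axioms give $t\circ w=-2$, not the identity. The simplest counterexample is $G=\bbG_m$ with $w=\id$ and $t(z)=z^{-2}$, so that $H=\mu_2$; this extension does not split, and the sign representation of $\mu_2$ certainly cannot be extended to $\bbG_m$ ``by declaring it of weight zero''. The repair is easy---since $w(-1)\in H$ is central, any $H$-representation decomposes as $V=V_+\oplus V_-$ under its action, and one extends to $G$ by letting $w(z)$ act trivially on $V_+$ and by $z$ on $V_-$---but the argument as you wrote it is incorrect. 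The paper avoids this issue altogether: in Milne's model $\cC/\omega_0$ the objects are by construction the same as those of $\cC$, so essential surjectivity of $\cC\to\Rep_F(H)$ is automatic and no splitting is needed.
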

\begin{proof}
By Theorem~\ref{thm:quot1Tann}(ii) the inclusion $\cS\subset \cC$ gives rise to a surjective group homomorphism $t: \mathrm{Gal}(\cC) \twoheadrightarrow \mathrm{Gal}(\cS)$. Let us denote by $H$ its kernel. Thanks to items (i) and (iii) of Theorem~\ref{thm:quot1Tann}, we have a sequence of exact $\otimes$-functors $\cS\subset \cC \to \mathrm{Rep}_F(H)$ inducing a group scheme isomorphism
$$ \mathrm{Gal}(\mathrm{Rep}_F(H))  \stackrel{\sim}{\too} \mathrm{Ker}\left(t:  \mathrm{Gal}(\cC) \twoheadrightarrow \mathrm{Gal}(\cS) \right) \,. $$
The proof will consist then on showing that the categories $\mathrm{Rep}_F(H)$ and $(\cC/_{\!\!-\otimes T})^\natural$ are $\otimes$-equivalent and that the Galois group $\mathrm{Gal}(\cS)$ agrees with the multiplicative group $\bbG_m$. Let us start with the latter claim. The $\bbZ$-grading $w$ of the Tate triple structure implies that the fiber functor $\omega: \cC \to \mathrm{Vect}(F)$ factors as follows  
$$
\xymatrix{
\cC \ar[r]^-{\overline{\omega}} \ar[dr]_-\omega & \mathrm{GrVect}(F) \ar[d]^-U\\
& \mathrm{Vect}(F)\,,
}
$$
where $\mathrm{GrVect}(F)$ denotes the category of finite dimensional $\bbZ$-graded $F$-vector spaces and $U$ the forgetful functor $\{V_n\}_{n \in \bbZ} \mapsto \oplus_{n \in \bbZ} V_n$. Since the Tate object $T$ is invertible and the functor $\overline{\omega}$ is symmetric monoidal, the object $\overline{\omega}(T)$ is also invertible. Moreover, since $T$ is of degree two and the invertible objects in $\mathrm{GrVect}(F)$ are one-dimensional, the $\bbZ$-graded $F$-vector space $\overline{\omega}(T)$ is also of degree two and one-dimensional. In particular, $\End_{\mathrm{GrVect}(F)}(\overline{\omega}(T))\simeq F$. This implies that the functor $\overline{\omega}$ establishes a $\otimes$-equivalence between $\cS$ and the Tannakian category generated by $\overline{\omega}(T)$. The latter category identifies with the subcategory $\mathrm{GrVect}^+(F)\subset \mathrm{GrVect}(F)$ of evenly supported $\bbZ$-graded $F$-vector spaces. Via a re-numerotation of the indexes, this category is $\otimes$-equivalent to $\mathrm{GrVect}(F)$. As a consequence, the Galois group $\mathrm{Gal}(\cS)$ (with respect to $\omega$) agrees with the Galois group of the Tannakian category $\mathrm{GrVect}(F)$ (with respect to $U$). It is well known that the latter Galois group agrees with the multiplicative group $\bbG_m$ and so the proof of the above claim is finished.

Let us now show that the categories $\mathrm{Rep}_F(H)$ and $(\cC/_{\!\!-\otimes T})^\natural$ are $\otimes$-equivalent. Since by hypothesis $\cT=(\cC,w,T)$ is a Tate triple, Lemma~\ref{G0lem}(ii) (with $G=\mathrm{Gal}(\cC)$ and $G_0=H$) implies that the quotient functor $Q: \cC \to \mathrm{Rep}_F(H)$ maps the Tate object $T$ to the $\otimes$-unit of $\mathrm{Rep}_F(H)$. As a consequence, we have a natural $2$-isomorphism $Q\circ (-\otimes T) \stackrel{\sim}{\Rightarrow} Q$. By the $2$-universality property of the orbit category $\cC/_{\!\!-\otimes T}$ and the fact that the category $\mathrm{Rep}_F(H)$ is idempotent complete, we obtain then an induced symmetric monoidal functor
\begin{equation}\label{eq:induced-new}
(\cC/_{\!\!-\otimes T})^\natural \too \mathrm{Rep}_F(H)\,.
\end{equation}
Now, recall from \cite[Example~2.6]{Milne} that since $(\cC,w,T)$ is a Tate triple, the category $\mathrm{Rep}_F(H)$ can be identified with the quotient category $\cC/\omega_0$ with respect to the $F$-valued fiber functor
\begin{eqnarray*}
\omega_0: \cS \too \mathrm{Vect}(F) && X \mapsto \mathrm{colim}_n \, \Hom_\cC\left(\bigoplus_{r=-n}^{n} {\bf 1}(r),X\right)\,.
\end{eqnarray*}
As explained in Definition~\ref{quotTann}, $\cC/\omega_0$ is the pseudo abelian envelope of a certain category $\cC'$. Hence, it suffices to show that $\cC/_{\!\!-\otimes T}$ and $\cC'$ are $\otimes$-equivalent. By construction, they have the same objects. In what concerns their morphisms, by arguing as in \cite[Prop.~2.3]{Milne}, we conclude that 
\begin{eqnarray*}
\Hom_{\cC'}(X,Y) & =& \mathrm{colim}_n \bigoplus_{r=-n}^n \Hom_\cC( {\bf 1},\underline{\mathrm{Hom}}(X\otimes T^{\otimes r},Y)) \\
& = &  \mathrm{colim}_n \bigoplus_{r=-n}^n \Hom_\cC(X\otimes T^{\otimes r} ,Y)\\
& =& \bigoplus_{j\in \bbZ} \Hom_\cC(X,Y\otimes T^{\otimes j})\,.
\end{eqnarray*}
The latter description agrees with the one of the orbit category $\cC/_{\!\!-\otimes T}$; see \eqref{eq:comp} with $\cO=T$. As a consequence, the above induced functor \eqref{eq:induced-new} is a $\otimes$-equivalence and so the proof is finished. 
\end{proof}
The notion of Tate triple does not admit an immediate generalization to the super-Tannakian setting. Motivated by the standard theory of motives, we nevertheless introduce the following notion.
\begin{definition}\label{sTate3def}
A {\em super-Tate triple} $\cS\cT=(\cC, \omega, \underline{\pi}^\pm_X, \cT^\dagger)$ consists of:
\begin{itemize}
\item[(i)] a neutral super-Tannakian category $\cC$;
\item[(ii)] a super-fiber functor $\omega: \cC \to \mathrm{sVect}(F)$;
\item[(iii)] idempotent endomorphisms $\underline{\pi}^\pm_X \in \End_\cC(X), X \in \cC$, such that $\omega(\underline{\pi}^\pm_X)=\pi_X^\pm$, where $\pi_X^\pm$ are the K\"unneth projectors;
\item[(iv)] a neutral Tate triple $\cT^\dagger=(\cC^\dagger, w,T)$ on the category $\cC^\dagger$. Recall from Proposition~\ref{prop:new} that $\cC^\dagger$ is obtained from $\cC$ by modifying the symmetry isomorphism constraints throughout the use of the endomorphisms $\underline{\pi}_X^\pm$. 
\end{itemize} 
\end{definition}
\begin{example}\label{ex:new}
Let $k$ be a field of characteristic zero. If we assume that the standard conjectures $C(Z)$ and $D(Z)$ hold for every smooth projective $k$-scheme $Z$, then the data
$$ (\Num(k)_k, s\overline{H^\ast_{dR}}, \underline{\pi}_X^\pm, (\Num^\dagger(k)_k, w, \bbQ(1)))$$
is a super-Tate triple. The super-fiber functor $s\overline{H^\ast_{dR}}: \Num(k)_k \to \mathrm{sVect}(k)$ is the one associated to de Rham cohomology and $(\Num^\dagger(k)_k,w, \bbQ(1))$ is the neutral Tate triple obtained from the one of Example~\ref{ex:Tate}(ii) by extension of scalars along the functor $-\otimes_\bbQ k$; see \cite[\S1]{Milne}.
\end{example}
\begin{proposition}\label{sTate3prop}
Let $F$ be a field of characteristic zero and $\cS\cT=(\cC, \omega, \underline{\pi}^\pm_X,\cT^\dagger)$ a super-Tate triple. Let $\cS$ denote the full neutral super-Tannakian subcategory of $\cC$ generated by the Tate object $T$. Assume the following conditions:
\begin{itemize}
\item[(i)] The Tate object $T$ is such that $\underline{\pi}^-_T(T)=0$.
\item[(ii)] Let $\epsilon: \mu_2 \to H$ be the $\bbZ/2$-grading induced by the $\bbZ$-grading on the neutral Tate triple $\cT^\dagger$ as in Lemma~\ref{G0lem} (with $G=\mathrm{Gal}(\cC^\dagger)$ and $G_0=H$). Then, the affine super-group scheme $(H,\epsilon)$ is isomorphic to the kernel of the induced surjective group homomorphism $\mathrm{sGal}(\cC) \twoheadrightarrow \mathrm{sGal}(\cS)$.
\item[(iii)] The super-Tannakian category $\mathrm{Rep}_F((H,\epsilon))$ of finite dimensional $F$-valued super representations is such that $\mathrm{Rep}^\dagger_F((H,\epsilon))\simeq Q$, where $Q \simeq \mathrm{Rep}_F(H)$ is the quotient Tannakian category associated to the inclusion $\cS^\dagger \subset \cC^\dagger$; see items (ii) and (iii) of Theorem~\ref{thm:quot1Tann}. 
\end{itemize}
Then, the pseudo-abelian envelope of the orbit category $\cC/_{\!\!-\otimes T}$ is a neutral super-Tannakian category and the sequence of exact $\otimes$-functors $\cS \subset \cC \to (\cC/_{\!\!-\otimes T})^\natural$ induces a group scheme isomorphism
$$ \mathrm{sGal}((\cC/_{\!\!-\otimes T})^\natural)  \stackrel{\sim}{\too} \mathrm{Ker}\left(t:  \mathrm{sGal}(\cC) \twoheadrightarrow \bbG_m \right) \,. $$
\end{proposition}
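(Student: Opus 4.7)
The plan is to mimic the proof of Proposition~\ref{prop:technical1}, using the $(-)^\dagger$ mechanism to transfer statements between the super-Tannakian world and the ordinary Tannakian world. First I would apply the super-Tannakian analogue of Theorem~\ref{thm:quot1Tann} to the inclusion $\cS\subset\cC$, obtaining a surjection $t:\mathrm{sGal}(\cC)\twoheadrightarrow\mathrm{sGal}(\cS)$; by hypothesis (ii) its kernel is $(H,\epsilon)$, and the quotient construction yields a sequence of exact $\otimes$-functors $\cS\subset\cC\to \mathrm{Rep}_F((H,\epsilon))$ inducing an isomorphism $\mathrm{sGal}(\mathrm{Rep}_F((H,\epsilon)))\isoto \mathrm{Ker}(t)$.

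Second I would identify $\mathrm{sGal}(\cS)$ with $\bbG_m$. Hypothesis (i), $\underline{\pi}^-_T(T)=0$, means that $\omega(T)$ is purely even; hence the modification of the symmetry isomorphism constraints acts trivially on the subcategory generated by $T$, so $\cS=\cS^\dagger$. This subcategory is thus neutral Tannakian, generated by a single invertible object of weight two, and by exactly the argument used in Proposition~\ref{prop:technical1} (factoring $\omega\vert_\cS$ through $\mathrm{GrVect}(F)$ and using that the Tate object becomes one-dimensional of degree two) it is $\otimes$-equivalent to $\mathrm{GrVect}(F)$. Its Galois group is therefore $\bbG_m$.

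Third, and this is the heart of the argument, I would establish a $\otimes$-equivalence
\[
(\cC/_{\!\!-\otimes T})^\natural \;\simeq\; \mathrm{Rep}_F((H,\epsilon)).
\]
Here I would use the $\dagger$-operation as a bridge. On one hand, hypothesis (iii) gives $\mathrm{Rep}_F((H,\epsilon))^\dagger\simeq Q\simeq \mathrm{Rep}_F(H)$. On the other hand, since $\underline{\pi}^\pm_T$ descend through the orbit construction, passing to $\dagger$ and passing to the orbit category $-\otimes T$ commute, so $(\cC/_{\!\!-\otimes T})^{\natural,\dagger}\simeq(\cC^\dagger/_{\!\!-\otimes T})^\natural$. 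Now Proposition~\ref{prop:technical1} applied to the neutral Tate triple $\cT^\dagger$ yields a $\otimes$-equivalence $(\cC^\dagger/_{\!\!-\otimes T})^\natural \simeq \mathrm{Rep}_F(H)$, precisely because the Galois group of $\cS^\dagger\subset\cC^\dagger$ is $\bbG_m$ with kernel $H$. Combining these identifications and undoing the $\dagger$-operation on both sides gives the desired equivalence. In particular, $(\cC/_{\!\!-\otimes T})^\natural$ is neutral super-Tannakian, and the induced map on super-Galois groups coincides with the isomorphism $\mathrm{sGal}(\mathrm{Rep}_F((H,\epsilon)))\isoto \mathrm{Ker}(t:\mathrm{sGal}(\cC)\twoheadrightarrow\bbG_m)$ obtained in the first step.

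The main obstacle, and the reason conditions (i)--(iii) are imposed by hand, is the compatibility between the symmetry-modification functor $(-)^\dagger$ and the orbit-category construction $(-)/_{\!\!-\otimes T}$: one must check that the Kunneth projectors $\underline{\pi}^\pm_X$ descend to the orbit category and define the same new symmetry constraints there, and that this new symmetry matches the one used on $\mathrm{Rep}_F((H,\epsilon))$ via the $\mu_2$-grading $\epsilon$. Once this bookkeeping is pinned down, the rest of the argument is a formal transcription of Proposition~\ref{prop:technical1}.
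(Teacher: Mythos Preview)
Your proposal is correct and follows essentially the same three-step route as the paper: use condition~(ii) to identify the kernel with $(H,\epsilon)$, use condition~(i) to show $\cS=\cS^\dagger$ and hence $\mathrm{sGal}(\cS)\simeq\bbG_m$, and use the commutation of $(-)^\dagger$ with the orbit construction (the paper's Lemma~\ref{lem:new-technical}) together with condition~(iii) and Proposition~\ref{prop:technical1} applied to $\cT^\dagger$ to obtain the $\otimes$-equivalence $(\cC/_{\!\!-\otimes T})^\natural\simeq\mathrm{Rep}_F((H,\epsilon))$. The only cosmetic difference is that the paper first builds the comparison functor $(\cC/_{\!\!-\otimes T})^\natural\to\mathrm{Rep}_F((H,\epsilon))$ directly from the $2$-universal property of the orbit category (since $T$ maps to the unit) and then checks it is an equivalence after applying $(-)^\dagger$, rather than ``undoing $\dagger$'' abstractly; this makes the argument slightly cleaner but is the same idea.
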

\begin{proof}
By condition (ii), the sequence of exact of $\otimes$-functors $\cS \subset \cC \to \mathrm{Rep}_F((H,\epsilon))$ induces a group scheme isomorphism
$$\mathrm{sGal}(\mathrm{Rep}_F((H,\epsilon)) \stackrel{\sim}{\too} \mathrm{Ker}(t: \mathrm{sGal}(\cC) \twoheadrightarrow \mathrm{sGal}(\cS))\,.$$
Hence, the proof will consist on showing that the categories $\mathrm{Rep}_F((H,\epsilon))$ and $\cC/_{\!\!-\otimes T}$ are $\otimes$-equivalent and that the super-Galois group $\mathrm{sGal}(\cS)$ agrees with $\bbG_m$. Let us start with the latter claim. Recall from the proof of Proposition~\ref{prop:new} that the category $\cS^\dagger$ is obtained from $\cS$ by modifying its symmetry isomorphism constraints $c_{N_1,N_2}$. The new symmetry isomorphism constraints are given by
$$ c_{N_1,N_2}^\dagger:= c_{N_1,N_2} \circ (e_{N_1} \otimes e_{N_2})$$
where $e_N=2 \cdot \underline{\pi}^+_N-\id_N$. Since by hypothesis $\underline{\pi}^-_T(T)=0$ we conclude that $\underline{\pi}_N^-(N)=0$ for every object $N\in\cS$. As a consequence, $\underline{\pi}^+_N=\id_N$ and so $e_N=\id_N$. The symmetric monoidal category $\cS^\dagger$ is then equal to $\cS$. This implies that $\mathrm{sGal}(\cS)\simeq \mathrm{Gal}(\cS^\dagger)$ since the super-fiber functor $\omega: \cS \to \mathrm{sVect}(F)$ take values in the subcategory $\mathrm{sVect}^+(F)\subset \mathrm{sVect}(F)$ of evenly supported super $F$-vector spaces, which via the forgetful functor is $\otimes$-equivalent to $\mathrm{Vect}(F)$. As explained in the proof of Proposition~\ref{prop:technical1}, $\mathrm{Gal}(\cS^\dagger)\simeq \bbG_m$ and so we conclude that $\mathrm{sGal}(\cS) \simeq\bbG_m$.   

Let us now show that the categories $\mathrm{Rep}_F((H,\epsilon))$ and $\cC/_{\!\!-\otimes T}$ are $\otimes$-equivalent. As in the Tannakian case, we have a sequence of exact $\otimes$-functors $\cS \subset \cC \to \mathrm{Rep}_F((H,\epsilon))$ such that the Tate object $T$ is mapped to the $\otimes$-unit of $\mathrm{Rep}_F((H,\epsilon))$. By the $2$-universality property of the orbit category category $\cC/_{\!\!-\otimes T}$ and the fact that the category $\mathrm{Rep}_F((H,\epsilon))$ is idempotent complete, we obtain an induced symmetric monoidal functor
\begin{equation}\label{eq:induced-new2}
(\cC/_{\!\!-\otimes T})^\natural \too \mathrm{Rep}_F((H,\epsilon))\,.
\end{equation}
By combining Lemma~\ref{lem:new-technical} (with $\cO=T$) with condition (iii), we then obtain an induced functor
\begin{equation}\label{eq:2}
(\cC^\dagger/_{\!\!-\otimes T})^\natural \stackrel{\sim}{\too} (\cC/_{\!\!-\otimes T})^{\natural, \dagger} \too \mathrm{Rep}_F^\dagger((H,\epsilon)) \stackrel{\sim}{\too} \mathrm{Rep}_F(H)\,.
\end{equation}
Now, since by hypothesis $\cT^\dagger$ is a neutral Tate triple, the proof of Proposition~\ref{prop:technical1} shows us that \eqref{eq:2} is a $\otimes$-equivalence. We then conclude that \eqref{eq:induced-new2} is also a $\otimes$-equivalence and so the proof is finished.
\end{proof}
\subsection*{Proof of Theorem~\ref{thm:main3}}
Let us start by constructing the surjective group homomorphism \eqref{eq:surj1}. Since by hypothesis $k$ is a field of characteristic zero, Theorem~\ref{thm:main1} implies that $\NNum(k)_k$ is super-Tannakian. Moreover, since the noncommutative standard conjectures $C_{NC}(\cA)$ and $D_{NC}(\cA)$ hold for every smooth and proper dg category $\cA$, Proposition~\ref{prop:new-new} implies that periodic cyclic homology gives rise to a super-fiber functor
\begin{equation}\label{eq:3}
\overline{HP_\ast}: \NNum(k)_k \too \mathrm{sVect}(k)\,.
\end{equation}
Now, recall from \eqref{eq:diagram-new} the following composition
\begin{equation}\label{eq:4}
\Num(k)_k \stackrel{\tau}{\too} \Num(k)_k/_{\!\!-\otimes \bbQ(1)} \stackrel{R_\cN}{\too} \NNum(k)_k \,.
\end{equation}
By composing \eqref{eq:3} with \eqref{eq:4} we obtain then a well-defined super-fiber functor on $\Num(k)_k$ which, as explained in the proof of Theorem~\ref{thm:equality}, is given by
\begin{eqnarray*}
s\overline{H^\ast_{dR}}: \Num(k)_k \too \mathrm{sVect}(k) && Z \mapsto \left(\bigoplus_{n \even} H^n_{dR}(Z), \bigoplus_{n \odd} H^n_{dR}(Z) \right)\,.
\end{eqnarray*} 
Now, recall from Example~\ref{ex:new} that since by hypothesis the standard conjectures $C(Z)$ and $D(Z)$ hold for every smooth projective $k$-scheme $Z$, we have a super-Tate triple
$$ (\Num(k)_k, s\overline{H^\ast_{dR}}, \underline{\pi}_X^\pm, (\Num^\dagger(k)_k, w, \bbQ(1)))\,.$$
Let us now show that this super-Tate triple satisfies conditions (i)-(iii) of Proposition~\ref{sTate3prop}. In what concerns item (i), the $\bbZ$-graded $k$-vector space $\overline{H^\ast_{dR}}(\bbQ(1))$ is of degree two and one dimensional. This implies that $\underline{\pi}^+_{\bbQ(1)}=\id_{\bbQ(1)}$ and so $\underline{\pi}^-_{\bbQ(1)}(\bbQ(1))=0$. Item (ii) follows from \cite[Example~0.4]{Deligne} and \cite[Example~5.4]{DelMil} since the Tate triple $(\Num^\dagger(k)_k,w,\bbQ(1))$ can be written as $(\mathrm{Rep}(G), w, \bbQ(1))$ with $G=\mathrm{Gal}(\Num^\dagger(k)_k)$. In what concerns item (iii), recall from Example~\ref{ex:Tate}(ii) that the $\bbZ$-grading $w$ on $\Num^\dagger(k)_k$ is induced by the algebraic correspondences $\underline{\pi}_Z^n$ such that $\overline{H^\ast_{dR}}(\underline{\pi}^n_Z)=\pi_Z^n$. This implies that the $\bbZ/2$-grading $\epsilon$ agrees with the $\bbZ/2$-grading induced by the algebraic correspondences $\underline{\pi}_Z^\pm$ such that $s\overline{H^\ast_{dR}}(\underline{\pi}^\pm_Z)=\pi_Z^\pm$. As a consequence, we conclude that the category $\mathrm{Rep}((H,\epsilon))$ is obtained from $\mathrm{Rep}(H)$ by modifying its symmetry isomorphism constraints with respect to the $\bbZ/2$-grading induced by the algebraic correspondences $\underline{\pi}_Z^\pm$. Now, by Proposition~\ref{sTate3prop} we obtain an induced group isomorphism
\begin{equation}\label{eq:star1}
\mathrm{sGal}((\Num(k)_k/_{\!\!-\otimes \bbQ(1)})^\natural) \stackrel{\sim}{\too} \mathrm{Ker}(t: \mathrm{sGal}(\Num(k)_k) \twoheadrightarrow \bbG_m)\,.
\end{equation}
On the other hand since by construction the category $\NNum(k)_k$ is idempotent complete, the functor $R_{\cN}$ in diagram \eqref{eq:4} gives rise to a surjective group homomorphism
$$ \mathrm{sGal}(\NNum(k)_k) \twoheadrightarrow \mathrm{sGal}((\Num(k)_k /_{\!\!-\otimes \bbQ(1)})^\natural)\,.$$
By combining it with \eqref{eq:star1} we obtain finally the surjective group homomorphism \eqref{eq:surj1} of Theorem~\ref{thm:main3}.

Let us now construct the surjective group homomorphism \eqref{eq:surj2}. We start by showing that throughout the modification of the symmetry isomorphism constraints of the category $\NNum(k)_k$ (as in Theorem~\ref{thm:main2}), the sequence of functors
$$ \Num(k)_k \stackrel{\tau}{\too} \Num(k)_k/_{\!\!-\otimes \bbQ(1)} \stackrel{R_\cN}{\too} \NNum(k)_k$$
described in diagram~\eqref{eq:diagram-new} gives rise to the following sequence
\begin{equation}\label{eq:sequence}
\Num^\dagger(k)_k \stackrel{\tau}{\too} \Num^\dagger(k)_k/_{\!\!-\otimes \bbQ(1)} \stackrel{R_\cN}{\too} \NNum^\dagger(k)_k\,.
\end{equation}
In order to show this, consider the following diagram
$$
\xymatrix@C=2em@R=2em{
\Chow(k)_k \ar[r] \ar[d] & \Chow(k)_k/_{\!\!-\otimes \bbQ(1)} \ar[d] \ar[r] & \NChow(k)_k \ar[d] \ar[r]^-{\overline{HP_\ast}} & \mathrm{sVect}(k) \ar@{=}[d]\\
\Chow(k)_\bbQ/\mathrm{Ker} \ar[d] \ar[r] & (\Chow(k)_k/_{\!\!-\otimes \bbQ(1)})/\mathrm{Ker} \ar[r] \ar[d] & \NHom(k)_F \ar[r]_{\overline{HP_\ast}} \ar[d] & \mathrm{sVect}(k)\\
\Num(k)_k \ar[r] & \Num(k)_k/_{\!\!-\otimes \bbQ(1)} \ar[r] & \NNum(k)_k& \,,
}
$$
where $\mathrm{Ker}$ stands for the kernel of the respective composed horizontal functor. As explained in the proof of Theorem~\ref{thm:equality}, the upper horizontal composition corresponds to the functor $s\overline{H^\ast_{dR}}$; see \eqref{eq:sHdR}.
Since by hypothesis the standard conjecture $C(Z)$ (and hence the sign conjecture $C^+(Z)$) holds for all smooth projective $k$-schemes $Z$, the functors
\begin{eqnarray*}
\Chow(k)_k/\mathrm{Ker} \too \mathrm{sVect}(k) && \Chow(k)_k/\mathrm{Ker} \too \Num(k)_k
\end{eqnarray*} 
satisfy the conditions of the general Proposition~\ref{prop:new}. Proposition~\ref{prop:new} is clearly functorial on $\cC$ and so when applied to the two lower rows of the above commutative diagram, gives rise to the following sequence
\begin{equation}\label{eq:seq8}
\Num^\dagger(k)_k \too (\Num(k)_k/_{\!\!-\otimes \bbQ(1)})^\dagger \too \NNum^\dagger(k)_k\,.
\end{equation}
By Lemma~\ref{lem:new-technical} (with $\cC=\Num(k)_k$ and $\cO=\bbQ(1)$), we have a canonical $\otimes$-equivalence
$$ \Num^\dagger(k)_k/_{\!\!-\otimes \bbQ(1)} \stackrel{\sim}{\too} (\Num(k)_k/_{\!\!-\otimes \bbQ(1)})^\dagger $$
and so the above sequence \eqref{eq:seq8} reduces to \eqref{eq:sequence}. Now, recall from Example~\ref{ex:Tate}(ii) that $(\Num^\dagger(k)_k,w,\bbQ(1))$ is a neutral Tate triple whose fiber functor is induced from the functor $\overline{H^\ast_{dR}}$ associated to de Rham cohomology. By Proposition~\ref{prop:technical1} we obtain then an induced group isomorphism
\begin{equation}\label{eq:star}
\mathrm{sGal}((\Num^\dagger(k)_k/_{\!\!-\otimes \bbQ(1)})^\natural) \stackrel{\sim}{\too} \mathrm{Ker}(t: \mathrm{sGal}(\Num^\dagger(k)_k) \twoheadrightarrow \bbG_m)\,.
\end{equation}
On the other hand since the category $\NNum(k)_k$ is idempotent complete, the functor $R_{\cN}$ in diagram \eqref{eq:sequence} gives rise to a surjective group homomorphism
$$ \mathrm{sGal}(\NNum^\dagger(k)_k) \twoheadrightarrow \mathrm{sGal}((\Num^\dagger(k)_k /_{\!\!-\otimes \bbQ(1)})^\natural)\,.$$
By combining it with \eqref{eq:star} we obtain finally the surjective group homomorphism \eqref{eq:surj2} of Theorem~\ref{thm:main3}. This achieves the proof.

\appendix

\section{Tannakian formalism}\label{appendix}
In this appendix we collect the notions and results of the (super-)Tannakian formalism which are used throughout the article. In what follows, $F$ will be a field and $(\cC,\otimes, {\bf 1})$ a $F$-linear, abelian, rigid symmetric monoidal category, such that $\End_\cC({\bf 1})\simeq F$.

Recall from Deligne~\cite{Deligne-Fest} that a {\em $K$-valued fiber functor on $\cC$} (with $K$ a field extension of $F$) is an exact faithful $\otimes$-functor $\omega: \cC \to \mathrm{Vect}(K)$ with values in the category of finite dimensional $K$-vector spaces. The category $\cC$ is called {\em Tannakian} if it admits a $K$-valued fiber functor. It is called {\em neutral Tannakian} if it admits a $F$-valued fiber functor. In this latter case, the fiber functor $\omega: \cC \to \mathrm{Vect}(F)$ gives rise to a $\otimes$-equivalence between $\cC$ and the category $\mathrm{Rep}_F(\mathrm{Gal}(\cC))$ of finite dimensional $F$-valued representations of the {\em Galois group} $\mathrm{Gal}(\cC):=\underline{\mathrm{Aut}}^\otimes(\omega)$.

\begin{theorem}\label{TannTrpos} {\rm (Deligne's intrinsic characterization \cite{Deligne-Fest})}
Let $F$ be a field of characteristic zero. Then, $\cC$ is Tannakian if and only if the rank $\mathrm{rk}(N):=\mathrm{tr}(\id_N)$ of each one of its objects $N$ is a non-negative integer.
\end{theorem}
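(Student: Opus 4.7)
\textbf{Proof plan for Theorem~\ref{TannTrpos}.} The ``only if'' direction is the routine one. Given a $K$-valued fiber functor $\omega\colon\cC\to\mathrm{Vect}(K)$, the rank is preserved by symmetric monoidal functors, so $\mathrm{rk}(N)=\mathrm{rk}(\omega(N))=\dim_K\omega(N)\in\bbZ_{\geq 0}$. I would dispatch this in one paragraph.

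The substantive direction is the ``if'' direction, and my plan would follow Deligne's strategy in \cite{Deligne-Fest}. Since $F$ has characteristic zero, for every $N\in\cC$ and every $n\geq 1$ the group ring $F[S_n]$ decomposes as a product of matrix algebras indexed by partitions $\lambda\vdash n$. I would use the antisymmetrizer idempotent $c_{(1,\dots,1)}\in F[S_n]$ to define the exterior powers $\Lambda^n N:=c_{(1,\dots,1)}(N^{\otimes n})$, and similarly for symmetric powers. A direct computation with the trace shows that $\mathrm{rk}(\Lambda^n N)=\binom{\mathrm{rk}(N)}{n}$ and $\mathrm{rk}(S^n N)=\binom{\mathrm{rk}(N)+n-1}{n}$. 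The integrality assumption $\mathrm{rk}(N)=d\in\bbZ_{\geq 0}$ then forces $\mathrm{rk}(\Lambda^{d+1}N)=0$.

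Next I would promote this rank vanishing to actual vanishing of the object. In an $F$-linear rigid abelian tensor category with $\End_\cC(\mathbf{1})\simeq F$, one shows using rigidity and the existence of the trace pairing that any object of rank zero must be zero: indeed, the rank-zero objects form a thick $\otimes$-ideal, and combined with the simplicity of $\mathbf{1}$ this forces them to be the zero ideal (here one uses that over a field the trace form is nondegenerate on $\End(\mathbf{1})$). Consequently every object $N$ is annihilated by some exterior power, i.e.\ $\cC$ is ``Schur-finite of exterior type'', and in particular no object has ``fermionic'' components. This is the mechanism by which the non-negative integer rank hypothesis eliminates the super-Tannakian possibility allowed by the weaker hypothesis of mere Schur-finiteness.

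The final and hardest step is to produce a fiber functor. I would first apply the super-Tannakian formalism (which under milder hypotheses guarantees a super-fiber functor $\omega\colon\cC\to\mathrm{sVect}(K)$ over some extension $K/F$) and then use the vanishing of all sufficiently high exterior powers to show that the odd component $\omega^{-}(N)$ is forced to be zero for every $N$. Concretely, if some $\omega(N)$ had a nonzero odd part of super-dimension $-r$, iterated symmetric powers of $N$ would have unbounded rank, contradicting that $S^n N$ has finite rank bounded by $\binom{d+n-1}{n}$ for $d=\mathrm{rk}(N)$. Thus $\omega$ factors through $\mathrm{Vect}(K)\subset\mathrm{sVect}(K)$ and is an honest fiber functor, showing that $\cC$ is Tannakian. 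The main obstacle is this last step: one must invoke either Deligne's construction of the super-fiber functor via Tannakian gerbes and $\mathrm{fpqc}$-descent on the classifying stack, or equivalently André--Kahn's combinatorial method of lifting idempotents along nilpotent ideals to extract the fiber functor from the trace data; either route requires a nontrivial amount of machinery that I would import from \cite{Deligne-Fest} rather than redevelop here.
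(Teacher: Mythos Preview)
The paper does not prove this theorem: it appears in Appendix~\ref{appendix} as a statement of Deligne's result, cited from \cite{Deligne-Fest}, and is invoked as a black box in the proof of Theorem~\ref{thm:main2}. There is no proof in the paper to compare your proposal against.

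On the proposal itself, two points deserve comment. First, your justification for ``rank zero implies zero'' does not work as written: rank-zero objects need not form a $\otimes$-ideal a priori, and nondegeneracy of the trace on the one-dimensional algebra $\End_\cC({\bf 1})\simeq F$ is a triviality that carries no content. The standard argument uses instead that in a rigid abelian $\otimes$-category with $\End({\bf 1})$ a field the rank is additive on short exact sequences; together with the global non-negativity hypothesis this reduces the claim to simple objects $S\neq 0$, for which the (necessarily injective) coevaluation ${\bf 1}\hookrightarrow S\otimes S^\vee$ forces $\mathrm{rk}(S)^2=\mathrm{rk}(S\otimes S^\vee)\geq\mathrm{rk}({\bf 1})=1$.

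Second, the argument in your final step is mis-aimed: the rank of $S^nN$ equals $\binom{d+n-1}{n}$ whether or not $\omega(N)$ has an odd part, so symmetric powers produce no contradiction. What does work is the exterior-power vanishing you already established: $\Lambda^{d+1}N=0$ gives $\Lambda^{d+1}\omega(N)=0$ in $\mathrm{sVect}(K)$, and in $\mathrm{sVect}(K)$ one has $\Lambda^n(K^{p|q})\neq 0$ for every $n$ as soon as $q\geq 1$, forcing $q=0$. Note also that routing the construction of the fiber functor through the super-Tannakian theorem of \cite{Deligne} is logically permissible but historically inverted, since that 2002 result is built on the very techniques of \cite{Deligne-Fest} you are trying to recover; Deligne's original argument constructs the fiber functor directly.
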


Following Deligne~\cite{Deligne}, a {\em $K$-valued super-fiber functor on $\cC$} (with $K$ a field extension of $F$) is an exact faithful $\otimes$-functor $\omega: \cC \to \mathrm{sVect}(K)$ with values in the category of finite dimensional super $K$-vector spaces. The category $\cC$ is called {\em super-Tannakian} if it admits a $K$-valued super-fiber functor. It is called {\em neutral super-Tannakian} if it admits a $F$-valued super-fiber functor. In this latter case, the super-fiber functor $\omega: \cC \to \mathrm{sVect}(F)$ gives rise to a $\otimes$-equivalence between $\cC$ and the category $\mathrm{Rep}_F(\mathrm{sGal}(\cC),\epsilon)$ of finite dimensional $F$-valued super-representations of the {\em super-Galois group} $\mathrm{sGal}(\cC):=\underline{\mathrm{Aut}}^\otimes(\omega)$, where $\epsilon$ is the parity automorphism implementing the super symmetry of $\omega$.  

\begin{theorem}\label{sutannDel} {\rm (Deligne's intrinsic characterization \cite{Deligne})}
Let $F$ be a field of characteristic zero. Then, $\cC$ is super-Tannakian if and only if is Schur-finite (see \S\ref{sec:Schur}). If $F$ is moreover algebraically closed, then $\cC$ is neutral super-Tannakian if and only if is Schur-finite.
\end{theorem}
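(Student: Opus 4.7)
The plan is to split the biconditional and reduce both halves to Deligne's theory of super-Tannakian categories, with the forward implication being routine and the reverse implication being, essentially by necessity, a direct invocation of the main theorem of \cite{Deligne}. I start with the easy direction: if $\cC$ is super-Tannakian, fix a super-fiber functor $\omega: \cC \to \mathrm{sVect}(K)$. The category $\mathrm{sVect}(K)$ is itself Schur-finite because any super vector space $V = V_0 \oplus V_1$ with $\dim V_0 = p$ and $\dim V_1 = q$ is annihilated by the Schur functor $S_\lambda$ attached to the rectangular partition $\lambda = ((q+1)^{p+1})$, a classical vanishing rule in the representation theory of $GL(p|q)$ due to Berele--Regev and Sergeev, which is the precise super analogue of the fact that $\Lambda^{n+1}$ kills an $n$-dimensional vector space. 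Since $\omega$ is faithful, $F$-linear and symmetric monoidal, Lemma~\ref{lem:Schur}(ii) transfers Schur-finiteness of each $\omega(X)$ back to $X \in \cC$.

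For the reverse direction I would invoke Deligne's theorem from \cite{Deligne} directly, rather than attempt a reconstruction. Its proof is a substantial piece of work combining the representation theory of the symmetric groups $S_n$ in characteristic zero, the theory of affine super-group schemes, and a delicate existence proof for a super-fiber functor obtained via a specialization/ultraproduct argument. The conceptual content is that Schur-finiteness is the correct ``moderate growth'' condition on iterated tensor powers: in characteristic zero it rules out the sub-exponential growth pathologies that obstruct the construction of a super-fiber functor on a general $F$-linear abelian rigid symmetric monoidal category with $\End_\cC(\mathbf{1}) \simeq F$. The hypotheses of rigidity and $\End_\cC(\mathbf{1}) \simeq F$ both enter crucially at this step.

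For the neutral statement under the extra assumption that $F$ is algebraically closed, I would proceed as follows. Once a $K$-valued super-fiber functor $\omega$ is available, Deligne's framework identifies the groupoid of super-fiber functors with a torsor under the affine super-group scheme $G = \underline{\mathrm{Aut}}^\otimes(\omega)$; a standard descent argument produces an $F$-form of this torsor representable by an affine $F$-scheme of finite type. Algebraic closedness of $F$ then guarantees an $F$-rational point, which is precisely a super-fiber functor valued in $\mathrm{sVect}(F)$, yielding the neutral super-Tannakian structure. The main obstacle is of course the reverse implication: there is no honest way to sidestep \cite{Deligne}, and the weight of the theorem lies entirely in that citation.
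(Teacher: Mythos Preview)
Your proposal is not wrong, but there is nothing to compare it against: the paper gives no proof of this statement. Theorem~\ref{sutannDel} is stated in the appendix as a known result attributed to Deligne~\cite{Deligne}, with no argument supplied. It functions purely as a black-box input to the proof of Theorem~\ref{thm:main1}.

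Your sketch of the forward direction is correct and would stand on its own: $\mathrm{sVect}(K)$ is Schur-finite by the Berele--Regev/Sergeev vanishing, and Lemma~\ref{lem:Schur}(ii) pulls this back along a faithful symmetric monoidal functor. For the reverse direction you rightly observe that there is no honest alternative to citing \cite{Deligne}; the construction of a super-fiber functor from Schur-finiteness is the entire content of that paper. Your descent sketch for the neutral case when $F$ is algebraically closed is also in the right spirit, though again this is part of Deligne's package rather than something the present paper develops. In short, you have written an expository gloss on a result the paper simply quotes.
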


\subsection{Tate triples}\label{gradingsec} {(consult Deligne-Milne~\cite[\S5]{DelMil})}
In this subsection, we assume that $F$ is of characteristic zero. Let $\mu_n$ be the affine group scheme of the {\em $n^{\mathrm{th}}$-roots of unity}, \ie the
kernel of the $n^{\mathrm{th}}$-power homomorphism $\bbG_m \to \bbG_m$. In particular, $\mu_2$ is the affine group scheme dual to the Hopf algebra $F[t]/(t^2-1)$.

\begin{definition}\label{ZgradTate}
Let $A=\bbZ$ (resp. $\bbZ/2$) and $B$ the multiplicative group $\bbG_m$ 
(resp. $\mu_2$). 
An {\em $A$-grading} on a Tannakian category $\cC$ consists of the following data:
\begin{enumerate}
\item[(i)] A functorial $A$-grading on objects $X=\bigoplus X^a$, compatible with tensor products $(X\otimes Y)^a=\oplus_{a=b+c} X^b \otimes Y^c$;
\item[(ii)] An $A$-grading of the identity functor $id_{\cC}$, compatible with tensor products;
\item[(iii)] a homomorphism $w: B\to \underline{\mathrm{Aut}}^\otimes(\id_{\cC})$;
\item[(iv)] a central homomorphism $B\to \underline{\mathrm{Aut}}^\otimes(\omega)$ for every fiber functor $\omega$. 
\end{enumerate}
\end{definition}

\begin{definition}\label{Tate3ple}
A {\em Tate triple} $\cT=(\cC,w,T)$ consists of a Tannakian category $\cC$,
a $\bbZ$-grading $w: \bbG_m \to \underline{\mathrm{Aut}}^\otimes(\id_{\cC})$ (called the {\em weight grading}), and an invertible object $T$ (called the {\em Tate object}) of weight $-2$. Given an object $X\in \cC$ and an integer $n$, we will write $X(n)$ for $X \otimes T^{\otimes n}$. A {\em $K$-valued fiber functor on $\cT$} (with $K$ a field extension of $F$) is a $K$-valued fiber functor $\omega$ on $\cC$ endowed with an isomorphism $\omega(T)\simeq \omega(T(1))$. If $\cT$ admits a $F$-valued fiber functor, then $\cT$ is called a {\em neutral Tate triple}.
\end{definition}
\begin{example}\label{ex:Tate}
\begin{itemize}
\item[(i)] Let $G$ be an algebraic group scheme endowed with a central homomorphism $w:\bbG_m \to G$ and with a homomorphism $t:G \to \bbG_m$ such that $ t \circ w =-2$. Let $T$ be the representation of $G$ on $F$ such that $g \in G$ acts as multiplication by $t(g)$. Then, as explained in \cite[Example~5.4]{DelMil}, $\cT=(\mathrm{Rep}_F(G),w,T)$ is a neutral Tate triple.

\item[(ii)] Let $k$ be a field of characteristic zero. If we assume that the standard conjecture $C(Z)$ holds for every smooth projective $k$-scheme $Z$ (see \S\ref{sec:Kunneth}), then $\cT=(\Num^\dagger(k)_\bbQ, w, \bbQ(1))$ is a Tate triple; see \cite[Thm.~6.7]{DelMil}. The $\bbZ$-grading $w$ is given by $(Z,p,m)_i := (Z, (p \circ \underline{\pi}_Z^{2m+i})(Z),m)$, where $\underline{\pi}^n_Z$ are algebraic correspondences such that $\overline{H^\ast_{dR}}(\underline{\pi}^n_Z) = \pi^n_Z$. The Tannakian category $\Num^\dagger(k)_\bbQ$ is obtained from $\Num(k)_\bbQ$ by modifying its symmetry isomorphism constraints using the algebraic correspondences $\underline{\pi}_Z^n$. Moreover, if we assume that the standard conjecture $D(Z)$ holds for every smooth projective $k$-scheme $Z$ (see \S\ref{sec:homological}), then de Rham cohomology $H^\ast_{dR}$ give rise to a $k$-valued fiber functor on $\cT$.
\end{itemize}
\end{example}
As explained in \cite[Prop.~5.5]{DelMil}, every Tate triple $\cT=(\cC,w,T)$ gives use to a central homomorphism $w:\bbG_m \to \mathrm{Gal}(\cC)$ and to a homomorphism $t: \mathrm{Gal}(\cC) \to \bbG_m$ such that $t \circ w=-2$.
\begin{lemma}\label{G0lem}
Let $G$ be an affine group scheme endowed with a central homomorphism $w: \bbG_m \to G$
and with a homorphism $t: G \to \bbG_m$ such that $t \circ w=-2$. Consider the kernel 
$G_0:={\rm Ker}(t: G\to \bbG_m)$ and the associated Tannakian category ${\rm Rep}_F(G_0)$. Then, the following four conditions hold:
\begin{enumerate}
\item[(i)] The $\bbZ$-grading on ${\rm Rep}_F(G)$ induces a $\bbZ/2$-grading
$\epsilon: \mu_2 \to G_0$ on ${\rm Rep}_F(G_0)$, making it into a $\bbZ/2$-graded
Tannakian category.
\item[(ii)] The inclusion $G_0 \hookrightarrow G$ gives rise to an (essentially) surjective 
$\otimes$-functor $Q:{\rm Rep}_F(G) \to {\rm Rep}_F(G_0)$, which maps homogeneous objects of weight $n$ to homogeneous objects of weight $n$ (mod $2$) and the Tate object $T$ to the $\otimes$-unit ${\bf 1}$. 
\item[(iii)] The Tate object $T$ becomes an {\em identity object} in ${\rm Rep}_F(G_0)$, \ie $T \simeq T\otimes T$. Moreover, the functor $\tau:  {\rm Rep}_F(G_0) \to
{\rm Rep}_F(G_0)$ given by 
$X \mapsto X\otimes T$ is an equivalence of  categories.
\item[(iv)] Two homogeneous objects $X, Y \in \mathrm{Rep}_F(G)$ of weights $n$ and $m$, respectively, become isomorphic in $\mathrm{Rep}_F(G_0)$ if and only if $m-n=2\ell$ for some $\ell \in \bbZ$ and $X(\ell)\simeq Y$. 
\end{enumerate}
\end{lemma}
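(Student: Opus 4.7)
The plan is to handle items (i)--(iv) in order, with essentially all the nontrivial work concentrated in the essential surjectivity clause of (ii); the remaining claims fall out formally. For (i), the central homomorphism $\epsilon:\mu_2\to G_0$ will be obtained by restricting $w$ to $\mu_2\subset\bbG_m$. The identity $t\circ w=-2$ forces $w(\zeta)\in\ker(t)=G_0$ for every $\zeta\in\mu_2$, and centrality in $G_0$ is inherited from the centrality of $w$ in $G$. By Definition~\ref{ZgradTate} with $B=\mu_2$, this central homomorphism is precisely the datum of a $\bbZ/2$-grading on $\mathrm{Rep}_F(G_0)$.

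For (ii), the restriction functor $Q:\mathrm{Rep}_F(G)\to\mathrm{Rep}_F(G_0)$ is automatically exact and symmetric monoidal. On a homogeneous object $X$ of weight $n$, the element $w(\zeta)$ acts as $\zeta^n$, so $\epsilon(-1)=w(-1)$ acts as $(-1)^n$, giving parity $n\pmod{2}$; and since every $g\in G$ acts on $T$ as $t(g)$, the subgroup $G_0=\ker(t)$ acts trivially on $T$, so $Q(T)\cong\mathbf{1}$. The non-formal point is essential surjectivity. I would first establish the decomposition $G=G_0\cdot w(\bbG_m)$ with intersection $G_0\cap w(\bbG_m)=w(\mu_2)$: surjectivity of $[-2]:\bbG_m\to\bbG_m$ in characteristic zero implies that $w(\bbG_m)$ surjects onto $G/G_0\cong\bbG_m$, and the overlap is computed directly from $t\circ w=-2$. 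Then, given any $V\in\mathrm{Rep}_F(G_0)$ with its $\epsilon$-induced decomposition $V=V^+\oplus V^-$, I extend the $G_0$-action to a $G$-action by declaring $w(\zeta)$ to act as $1$ on $V^+$ and as $\zeta$ on $V^-$. Consistency on the overlap $w(\mu_2)$ is exactly the $\bbZ/2$-grading condition, and centrality of $w(\bbG_m)$ in $G$ ensures commutation with the $G_0$-action, so the prescription descends to a well-defined $G$-representation whose restriction to $G_0$ is the original $V$.

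Item (iii) will be a formal consequence of $Q(T)\cong\mathbf{1}$: tensoring the isomorphism with itself gives $T\otimes T\cong\mathbf{1}\cong T$ in $\mathrm{Rep}_F(G_0)$, and the functor $\tau=-\otimes T$ is naturally isomorphic to the identity, hence an equivalence. For (iv), the necessity of $m-n\in 2\bbZ$ is immediate from (ii), since $Q$ must preserve the parity. For sufficiency, I Tate-twist $X$ so that its weight matches that of $Y$ (possible once $m-n$ is even, since $T$ has weight $-2$); on both sides $w(\zeta)$ then acts by the \emph{same} scalar, so any $G_0$-linear isomorphism between $Q$ of the twisted $X$ and $Q(Y)$ is automatically $w(\bbG_m)$-equivariant. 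Combined with $G=G_0\cdot w(\bbG_m)$, this upgrades the $G_0$-isomorphism to a genuine $G$-isomorphism.

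The main obstacle I anticipate is the essential surjectivity step in (ii): one must both verify the decomposition $G=G_0\cdot w(\bbG_m)$ with the correct overlap $w(\mu_2)$, and then check that the constructed $(G_0\times\bbG_m)$-action genuinely descends to a $G$-action. Once this is in hand, every other claim in the lemma is a direct unwinding of $\epsilon$, the action of $T$ through $t$, and the central structure of $w$.
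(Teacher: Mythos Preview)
The paper does not actually supply a proof of this lemma: it is stated in the appendix under the heading ``Tate triples (consult Deligne--Milne~[\S5])'' and left without argument, as a standard consequence of the Deligne--Milne theory. So there is no in-paper proof to compare against.

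Your argument is correct and is exactly the sort of proof one would expect here. The key structural observation --- that $w|_{\mu_2}$ lands in $G_0$ because $t\circ w=-2$ kills $\mu_2$, and that the multiplication map $G_0\times_{\mu_2}\bbG_m\to G$ (using centrality of $w$) is an isomorphism of affine group schemes --- is the heart of the matter, and you have identified it. Two small points worth tightening: first, the surjectivity of $[-2]:\bbG_m\to\bbG_m$ should be read as surjectivity of fppf sheaves (equivalently, faithful flatness), not on $F$-points, since $F$ need not be algebraically closed; this is what makes $G_0\cdot w(\bbG_m)=G$ hold scheme-theoretically. Second, in (iv) watch the sign: $T$ has weight $-2$, so $X(\ell)$ has weight $n-2\ell$, and matching this to $m$ gives $n-m=2\ell$ rather than $m-n=2\ell$; the lemma's phrasing ``$m-n=2\ell$ for some $\ell\in\bbZ$'' is of course equivalent, but your twist should be by $-\ell$ if you want the weights to line up as written. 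Neither of these affects the validity of your outline.
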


\subsection{Quotient categories}{(consult Milne~\cite{Milne2})}\label{appendix:quotient}
\begin{theorem}\label{thm:quot1Tann}
Let $S \subset \cC$ be an inclusion of neutral Tannakian categories with Galois groups ${\rm Gal}(\cS)$ and ${\rm Gal}(\cC)$. Then, the following four conditions hold:
\begin{enumerate}
\item[(i)] There is a
quotient neutral Tannakian category $\cQ$ and an exact $\otimes$-functor $Q: \cC \to \cQ$ such that all objects of $\cQ$ are subquotients of objects in the image of $Q$. Moreover, the objects of $\cS$ are precisely those of $\cC$ which become trivial in $\cQ$. 
\item[(ii)] The inclusion $\cS \subset \cC$ determines a surjective group homomorphism on the
corresponding Galois groups ${\rm Gal}(\cC) \twoheadrightarrow {\rm Gal}(\cS)$.
\item[(iii)] Let $H$ be the kernel of the homomorphism ${\rm Gal}(\cC) \twoheadrightarrow {\rm Gal}(\cS)$.
Then, there is an equivalence of categories ${\rm Rep}_F(H) \simeq  \cQ$. 
\item[(iv)] Given an object $X\in \cC$, one
denotes by $X^H$ the largest subobject of $X$ on which $H$ acts trivially. Under this notation, the subcategory $\cS \subset \cC$ agrees with the Tannakian subcategory $\cC^H\subset \cC$ of those objects $X$ such that $X^H=X$. 
\end{enumerate}
\end{theorem}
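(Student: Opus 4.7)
The plan is to reduce all four items to the classical Tannakian dictionary together with the quotient construction of Milne~\cite{Milne2}. I fix a fiber functor $\omega\colon \cC \to \mathrm{Vect}(F)$ which restricts to one on $\cS$, giving equivalences $\cC \simeq \mathrm{Rep}_F(G)$ and $\cS \simeq \mathrm{Rep}_F(\mathrm{Gal}(\cS))$ with $G:=\mathrm{Gal}(\cC)$. The functoriality of $\underline{\mathrm{Aut}}^\otimes$ produces a restriction homomorphism $r\colon G \to \mathrm{Gal}(\cS)$; the surjectivity statement (ii) is the standard criterion, recorded in \cite{DelMil} and \cite{Milne2}, that an inclusion of Tannakian categories closed under subquotients (which is built into the phrase ``inclusion of neutral Tannakian categories'') corresponds to a surjection of the associated affine group schemes.

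For (i) and (iii) I set $H:=\mathrm{Ker}(r)$, define $\cQ := \mathrm{Rep}_F(H)$, and take $Q\colon \cC \simeq \mathrm{Rep}_F(G) \to \mathrm{Rep}_F(H)$ to be the restriction functor along $H \hookrightarrow G$. This functor is automatically exact and symmetric monoidal, and an object of $\cC$ is sent to a trivial object in $\cQ$ exactly when the corresponding $G$-representation has trivial $H$-action. The subquotient statement in (i) is the standard fact, proved in \cite{Milne2} by slicing the regular representation $\cO(G)$, that every finite-dimensional $H$-representation is a subquotient of $\mathrm{Res}^G_H V$ for some finite-dimensional $G$-representation $V$.

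It remains to identify $\cS$ with $\cC^H$, which will simultaneously complete the second half of (i) and prove (iv). Inflation along the quotient $G \twoheadrightarrow G/H$ provides a fully faithful embedding $\mathrm{Rep}_F(G/H) \hookrightarrow \mathrm{Rep}_F(G)$ whose essential image is precisely the full subcategory $\mathrm{Rep}_F(G)^H$ of representations on which $H$ acts trivially, i.e.\ $\cC^H$. Since $r$ is surjective with kernel $H$ we have $G/H \simeq \mathrm{Gal}(\cS)$, and by the very construction of $r$ this embedding is, after the equivalences induced by $\omega$, the given inclusion $\cS \subset \cC$. Thus $\cS = \cC^H$, and since $X^H$ corresponds under $\omega$ to the largest $H$-fixed subrepresentation of $X$ we conclude that $\cS = \{X \in \cC : X^H = X\}$, as claimed in (iv). The main technical obstacle, hidden inside the citations above, is the regular-representation argument for affine group schemes that underlies both the surjectivity in (ii) and the subquotient statement in (i); everything else is a clean unpacking of Tannakian duality.
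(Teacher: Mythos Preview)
The paper does not supply its own proof of this theorem: it is stated in the appendix under the heading ``Quotient categories (consult Milne~\cite{Milne2})'' and is treated as a known result imported from Milne's work. Your sketch is a correct unpacking of the standard argument---reduce via the fiber functor to the representation-theoretic side, invoke the surjectivity criterion of \cite{DelMil,Milne2} for (ii), define $\cQ$ as $\mathrm{Rep}_F(H)$ with $H=\mathrm{Ker}(r)$ and $Q$ as restriction for (i) and (iii), and identify $\cS$ with $\cC^H$ via inflation along $G\twoheadrightarrow G/H$ for (iv)---and this is precisely the line of reasoning one finds in the cited references. There is nothing to compare against here beyond the citation itself.
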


\begin{definition}\label{quotTann}
Let $\cC$ be a neutral Tannakian category and $\omega_0:\cS \to \mathrm{Vect}(F)$ a $F$-valued fiber functor on a Tannakian subcategory $\cS \subset \cT$.
Then, the {\em quotient category $\cC/\omega_0$} is the pseudo-abelian envelope of the category $\cC'$ which as the same objects as $\cC$ and morphisms given by
\begin{equation*}
\Hom_{\cC^\prime}(X,Y) := \omega_0(\underline{\Hom}_\cC(X,Y)^H)\,.
\end{equation*}
Here, $\underline{\Hom}_\cC(X,Y)$ stands for the internal Hom-object of the symmetric monoidal category $\cC$ and $H \subset {\rm Gal}(\cC)$ for the subgroup described in Theorem~\ref{thm:quot1Tann}(iii).
\end{definition}

\section{Orbit categories}\label{app:orbit}
In this appendix (which is of general interest) we recall the notion of orbit category and describe its behavior with respect to four distinct operations. In what follows, $F$ will be a field, $(\cC,\otimes, {\bf 1})$ a $F$-linear, additive, rigid symmetric monoidal category, and $\cO \in \cC$ a $\otimes$-invertible object.

As explained in \cite[\S7]{CvsNC}, we can then consider the {\em orbit category} $\cC/_{\!\!-\otimes \cO}$. It has the same objects as $\cC$ and morphisms given by 
\begin{equation}\label{eq:comp}
\Hom_{\cC/_{\!\!-\otimes \cO}}(X,Y) := \bigoplus_{j \in \bbZ} \Hom_\cC(X,Y\otimes \cO^{\otimes j})\,.
\end{equation}
The composition law is induced by the one on $\cC$. By construction, $\cC/_{\!\!-\otimes \cO}$ is $F$-linear, additive, rigid symmetric monoidal (see \cite[Lemma~7.3]{CvsNC}), and comes equipped with a canonical projection $\otimes$-functor $\tau: \cC \to \cC/_{\!\!-\otimes \cO}$. Moreover, $\tau$ is endowed with a natural $2$-isomorphism $\tau \circ (-\otimes \cO) \stackrel{\sim}{\Rightarrow} \tau$ and is $2$-universal among all such functors. 
\subsection*{Change of coefficients}
Recall from \S\ref{sec:coefficients} the change-of-coefficients mechanism.
\begin{lemma}\label{lem:orbit}
Let $K$ be a field extension of $F$. Then, the canonical functor
$$(\cC/_{\!\!-\otimes \cO})\otimes_FK \stackrel{\sim}{\too} (\cC\otimes_FK)/_{\!\!-\otimes \cO}$$
is a $\otimes$-equivalence.
\end{lemma}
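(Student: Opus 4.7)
The plan is to observe that both categories have the same class of objects (namely the objects of $\cC$) and then reduce the statement to the well-known fact that the functor $-\otimes_F K$ commutes with arbitrary direct sums of $F$-vector spaces.

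First I would unpack the definitions on the morphism sets. On the left-hand side, the hom space from $X$ to $Y$ is
\[
\Hom_{(\cC/_{\!\!-\otimes \cO})\otimes_F K}(X,Y) \;=\; \Hom_{\cC/_{\!\!-\otimes \cO}}(X,Y) \otimes_F K \;=\; \Bigl(\bigoplus_{j\in\bbZ}\Hom_\cC(X,Y\otimes\cO^{\otimes j})\Bigr)\otimes_F K,
\]
while on the right-hand side, and using the definition of $\cC\otimes_F K$, the hom space is
\[
\Hom_{(\cC\otimes_F K)/_{\!\!-\otimes\cO}}(X,Y) \;=\; \bigoplus_{j\in\bbZ}\Hom_{\cC\otimes_F K}(X,Y\otimes\cO^{\otimes j}) \;=\; \bigoplus_{j\in\bbZ}\Bigl(\Hom_\cC(X,Y\otimes\cO^{\otimes j})\otimes_F K\Bigr).
\]
Since $-\otimes_F K$ is a left adjoint (scalar extension) it commutes with arbitrary direct sums of $F$-vector spaces, so these two expressions are canonically identified. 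The canonical functor of the statement implements precisely this identification, hence it is fully faithful and the identity on objects, \ie an equivalence of $F$-linear additive categories.

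It then remains to check that this equivalence respects the rigid symmetric monoidal structures. By construction, the monoidal structure on an orbit category is inherited from the ambient category on objects, and the monoidal structure on $\cD \otimes_F K$ is inherited from $\cD$ on objects, so at the level of objects the two monoidal structures agree. On morphisms, the tensor product in the orbit category is induced from the $\otimes$-structure of $\cC$ (using the invertibility of $\cO$ to rearrange the indexing, as in \cite[Lemma~7.3]{CvsNC}), and extending $F$-linearly to $K$ simply tensors every component of the decomposition with $K$; by the same distributivity of $\otimes_F K$ over direct sums, this operation commutes with the formation of the orbit category. The same verification applies to the associativity, unit and braiding isomorphisms, as well as to the evaluation/coevaluation maps witnessing rigidity.

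The only real content is the interchange of the direct-sum appearing in the orbit-category construction with the scalar extension $-\otimes_F K$; the rest is bookkeeping with structural isomorphisms, so I do not anticipate any serious obstacle.
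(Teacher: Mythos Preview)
Your argument is correct and follows exactly the same approach as the paper: identify objects and then use that $-\otimes_F K$ commutes with the direct sum \eqref{eq:comp}. The paper's proof is a one-line remark to this effect; your additional check of the monoidal compatibility is more detailed than what the paper records but entirely in the same spirit.
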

\begin{proof}
The proof follows from the above description \eqref{eq:comp} and from the fact that the functor $-\otimes_FK $, from $F$-vector spaces to $K$-vector spaces, commutes with arbitrary sums.
\end{proof}
\subsection*{Pseudo-abelian envelope}
The {\em pseudo-abelian envelope $\cC^\natural$} of $\cC$ is defined as follows: the objects are the pairs $(X,e)$, where $X \in \cC$ and $e$ is an idempotent of the $F$-algebra $\End_\cC(X)$, and the morphisms are given by
$$ \Hom_{\cC^\natural}((X,e),(Y,e')):= e \circ \Hom_\cC(X,Y) \circ e'\,.$$
Composition is naturally induced by $\cC$. The symmetric monoidal structure on $\cC$ extends natural to $\cC^\natural$ by the formula $(X,e) \otimes (Y,e') := (X \otimes Y, e \otimes e')$.
\begin{lemma}\label{lem:orbit-pseudo}
We have a fully-faithful, $F$-linear, additive, $\otimes$-functor
\begin{eqnarray}\label{eq:orbit-pseudo}
\cC^\natural/_{\!\!-\otimes\cO} \too (\cC/_{\!\!-\otimes\cO})^\natural && (X,e) \mapsto (X, \tau(e))\,.
\end{eqnarray}
Moreover, the induced functor $\tau^\natural: \cC^\natural \to (\cC/_{\!\!-\otimes \cO})^\natural$ factors through \eqref{eq:orbit-pseudo}.
\end{lemma}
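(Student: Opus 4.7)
The plan is to construct the functor explicitly on morphisms and then verify fully-faithfulness via a direct Hom-set comparison, with everything else being a formality. On objects the assignment $(X,e)\mapsto (X,\tau(e))$ makes sense because $\tau\colon\cC\to\cC/_{\!\!-\otimes\cO}$ preserves composition and identities, so the image of an idempotent is still an idempotent. On morphisms I would send an element of
$$\Hom_{\cC^\natural/_{\!\!-\otimes\cO}}((X,e),(Y,e'))=\bigoplus_{j\in\bbZ}\Hom_{\cC^\natural}\bigl((X,e),(Y,e')\otimes\cO^{\otimes j}\bigr)$$
to the corresponding element of $\Hom_{(\cC/_{\!\!-\otimes\cO})^\natural}((X,\tau(e)),(Y,\tau(e')))=\tau(e')\circ\Hom_{\cC/_{\!\!-\otimes\cO}}(X,Y)\circ\tau(e)$ using the obvious componentwise identification.

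The main step is checking that the two Hom-sets agree. On the source side, using $(Y,e')\otimes\cO^{\otimes j}=(Y\otimes\cO^{\otimes j},e'\otimes\id_{\cO^{\otimes j}})$, the definition of the pseudo-abelian envelope gives
$$\bigoplus_{j\in\bbZ}(e'\otimes\id_{\cO^{\otimes j}})\circ\Hom_\cC(X,Y\otimes\cO^{\otimes j})\circ e.$$
On the target side, $\tau(e)$ (resp.\ $\tau(e')$) sits in the $j=0$ summand of $\End_{\cC/_{\!\!-\otimes\cO}}(X)=\bigoplus_j\Hom_\cC(X,X\otimes\cO^{\otimes j})$ (resp.\ of $\End_{\cC/_{\!\!-\otimes\cO}}(Y)$). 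Composition in $\cC/_{\!\!-\otimes\cO}$ respects the $\bbZ$-grading only up to tensoring with $\cO$, and a short inspection of the composition law \eqref{eq:comp} shows that left composition with $\tau(e')$ and right composition with $\tau(e)$ act on the $j$-th summand precisely by $(e'\otimes\id_{\cO^{\otimes j}})\circ(-)\circ e$. Hence
$$\tau(e')\circ\Hom_{\cC/_{\!\!-\otimes\cO}}(X,Y)\circ\tau(e)=\bigoplus_{j\in\bbZ}(e'\otimes\id_{\cO^{\otimes j}})\circ\Hom_\cC(X,Y\otimes\cO^{\otimes j})\circ e,$$
matching the source verbatim. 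This is the only step of genuine content; I expect no real obstacle, only care with the grading.

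The remaining structure is automatic. $F$-linearity and additivity follow from the fact that both orbit category and pseudo-abelian envelope constructions preserve these, and that $\tau$ is $F$-linear and additive. For the symmetric monoidal structure, the tensor product of objects is handled by $\tau(e\otimes e')=\tau(e)\otimes\tau(e')$, which holds because $\tau$ is a $\otimes$-functor; on morphisms, compatibility is checked termwise in the direct sum \eqref{eq:comp}.

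Finally, for the factorization claim, observe that the pseudo-abelian envelope is $2$-functorial, so $\tau\colon\cC\to\cC/_{\!\!-\otimes\cO}$ induces $\tau^\natural\colon\cC^\natural\to(\cC/_{\!\!-\otimes\cO})^\natural$ given by $(X,e)\mapsto(X,\tau(e))$. On the other hand, the composition of the canonical projection $\cC^\natural\to\cC^\natural/_{\!\!-\otimes\cO}$ (which is the identity on objects) with the functor \eqref{eq:orbit-pseudo} also sends $(X,e)\mapsto(X,\tau(e))$; on morphisms, the assignment is unraveled to the evident identification of $\Hom_{\cC^\natural}$-elements of $j$-shifted targets inside the orbit category. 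Uniqueness of the induced assignment on both sides then yields the desired factorization of $\tau^\natural$.
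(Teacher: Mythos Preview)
Your proposal is correct and follows essentially the same route as the paper: both arguments reduce fully-faithfulness to a direct comparison of the two Hom-sets, using that $\tau(e)$ and $\tau(e')$ sit in degree $0$ so that composition in the orbit category acts summandwise as $(e'\otimes\id_{\cO^{\otimes j}})\circ(-)\circ e$, and both treat $F$-linearity, additivity, monoidality, and the factorization as formalities. The only discrepancy is a harmless convention swap in the order of idempotents (the paper writes $e\circ\Hom_\cC(X,Y)\circ e'$ where you write $e'\circ\Hom_\cC(X,Y)\circ e$); this does not affect the argument.
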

\begin{proof}
The fact that the functor \eqref{eq:orbit-pseudo} is $F$-linear, additive, and symmetric monoidal is clear. Let us then show that it is moreover fully-faithful. Given objects $(X,e)$ and $(Y,e')$ in $\cC^\natural$, we have the following equality
\begin{equation}\label{eq:Hom1}
\Hom_{\cC^\natural/_{\!\!-\otimes \cO}}((X,e),(Y,e'))= \bigoplus_{j \in \bbZ} e \circ \Hom_\cC(X,Y \otimes \cO^{\otimes j}) \circ (e' \otimes \id_{\cO^{\otimes j}})\,,
\end{equation}
where $e \otimes \id_{\cO^{\otimes j}}$ is an idempotent of $Y \otimes \cO^{\otimes j}$. On the other hand, we have the equality
\begin{equation}\label{eq:Hom2}
\Hom_{(\cC/_{\!\!-\otimes \cO})^\natural}((X,\tau(e)),(Y,\tau(e')))= \tau(e) \circ \bigoplus_{j \in \bbZ} \Hom_\cC(X,Y\otimes \cO^{\otimes j}) \circ \tau(e') \,.
\end{equation}
The composition operation of the orbit category $\cC/_{\!\!-\otimes \cO}$ allows us then to conclude that \eqref{eq:Hom1}=\eqref{eq:Hom2}, which implies that the functor \eqref{eq:orbit-pseudo} is fully-faithful. The fact that the induced functor $\tau^\natural$ factors through \eqref{eq:orbit-pseudo} is by now clear.
\end{proof}
\subsection*{Quotients categories}
\begin{lemma}\label{lem:orbit-kernel}
Let $H: \cC/_{\!\!-\otimes \cO} \to \cD$ be a $F$-linear, additive, $\otimes$-functor. Then, we obtain a full, $F$-linear, additive, $\otimes$-functor
\begin{eqnarray}\label{eq:orbit-kernel}
(\cC/{\it Ker})/_{\!\!-\otimes \cO} \too (\cC/_{\!\!-\otimes \cO})/\mathrm{Ker}(H) && X \mapsto X\,,
\end{eqnarray}
where ${\it Ker}$ denotes the kernel of the composed functor $\cC \stackrel{\tau}{\to} \cC/_{\!\!-\otimes \cO} \stackrel{H}{\to} \cD$. Moreover, the induced functor $\cC/{\it Ker} \to (\cC/_{\!\!-\otimes \cO})/\mathrm{Ker}(H)$ factors through \eqref{eq:orbit-kernel}.
\end{lemma}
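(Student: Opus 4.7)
The plan is to define \eqref{eq:orbit-kernel} as the identity on objects and, on morphisms, by the rule that sends an element of $\Hom_{(\cC/{\it Ker})/_{\!\!-\otimes \cO}}(X,Y) = \bigoplus_{j \in \bbZ} \Hom_\cC(X, Y \otimes \cO^{\otimes j})/{\it Ker}$, represented by a finite family $\{[f_j]\}_j$ with $f_j \in \Hom_\cC(X, Y \otimes \cO^{\otimes j})$, to the class in $\Hom_{(\cC/_{\!\!-\otimes \cO})/\mathrm{Ker}(H)}(X,Y)$ of the same tuple $(f_j)_j$, viewed directly as an element of $\Hom_{\cC/_{\!\!-\otimes \cO}}(X,Y) = \bigoplus_j \Hom_\cC(X, Y \otimes \cO^{\otimes j})$ via \eqref{eq:comp}. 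Once well-definedness is established, $F$-linearity, additivity, and the symmetric monoidal structure are all inherited summand-wise from $\cC$ and $\cC/_{\!\!-\otimes \cO}$, and are therefore routine.

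The hard part is the well-definedness check, which I will handle as follows. First observe that since $\cO$ is $\otimes$-invertible, ${\it Ker}$ is a $\otimes$-ideal of $\cC$ stable under $-\otimes \cO$, so the orbit category $(\cC/{\it Ker})/_{\!\!-\otimes \cO}$ makes sense. Now, given $f \in {\it Ker}(X, Y\otimes \cO^{\otimes j})$, i.e.\ $H(\tau(f)) = 0$, I need to show that $f$, regarded as an element $f^{(j)}$ of the $j$-th summand of $\Hom_{\cC/_{\!\!-\otimes \cO}}(X,Y)$, lies in $\mathrm{Ker}(H)(X,Y)$. The key observation is that, unwrapping \eqref{eq:comp}, one has $f^{(j)} = \sigma_j \circ \tau(f)$ in $\cC/_{\!\!-\otimes \cO}$, where $\sigma_j: Y \otimes \cO^{\otimes j} \stackrel{\sim}{\to} Y$ is the canonical isomorphism provided by iterating the natural $2$-isomorphism $\tau \circ (-\otimes \cO) \stackrel{\sim}{\Rightarrow} \tau$. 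Applying the functor $H$ then yields $H(f^{(j)}) = H(\sigma_j) \circ H(\tau(f)) = 0$, as required.

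Fullness is then immediate: every element of $\Hom_{(\cC/_{\!\!-\otimes \cO})/\mathrm{Ker}(H)}(X,Y)$ is represented by some $(f_j)_j \in \bigoplus_j \Hom_\cC(X, Y \otimes \cO^{\otimes j})$, and its preimage under \eqref{eq:orbit-kernel} is the formal sum of the classes $[f_j]$ in $(\cC/{\it Ker})/_{\!\!-\otimes \cO}$. For the factorization claim, one checks that composing the orbit projection $\cC/{\it Ker} \to (\cC/{\it Ker})/_{\!\!-\otimes \cO}$ with \eqref{eq:orbit-kernel} sends each morphism $[f]: X \to Y$ in $\cC/{\it Ker}$ to its image in the $j=0$ summand of $\bigoplus_j \Hom_\cC(X, Y \otimes \cO^{\otimes j})/\mathrm{Ker}(H)$; this coincides, by construction, with the natural induced functor $\cC/{\it Ker} \to (\cC/_{\!\!-\otimes \cO})/\mathrm{Ker}(H)$.
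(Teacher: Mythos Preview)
Your proof is correct and follows essentially the same approach as the paper's: both define the functor as the identity on objects and prove fullness by the same componentwise lift $\{f_j\}_j \mapsto \{[f_j]\}_j$. The paper's proof simply asserts that the functor \eqref{eq:orbit-kernel} is $F$-linear, additive, and symmetric monoidal, and then gives the fullness lift; you add an explicit well-definedness check (via the factorization $f^{(j)} = \sigma_j \circ \tau(f)$) that the paper leaves implicit.
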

\begin{proof}
The fact that the functor \eqref{eq:orbit-kernel} is $F$-linear, additive, and symmetric monoidal is clear. In order to show that \eqref{eq:orbit-kernel} is moreover full, note that every morphism $[\underline{f}]=[\{ f_j\}_{j \in \bbZ}\}] \in \Hom_{(\cC/_{\!\!-\otimes \cO})/\mathrm{Ker}(H)}(X,Y)$, with $ \{ f_j\}_{j \in \bbZ} \in \bigoplus_{j \in \bbZ} \Hom_\cC(X,Y \otimes \cO^{\otimes j})$ admits a a canonical lift given by $\underline{\tilde{f}}=\{[f_j]\}_{j \in \bbZ} \in \Hom_{(\cC/{\it Ker})/_{\!\!-\otimes \cO}}(X,Y)$, with $\{ [f_j]\}_{j \in \bbZ} \in \bigoplus_{j \in \bbZ} \Hom_{\cC/{\it Ker}}(X,Y \otimes \cO^{\otimes j})$. The fact that the induced functor $\cC/{\it Ker} \to (\cC/_{\!\!-\otimes \cO})/\mathrm{Ker}(H)$ factors through \eqref{eq:orbit-kernel} is by now clear.
\end{proof}
\subsection*{Change of symmetry}
Recall from Proposition~\ref{prop:new} the general procedure $(-)^\dagger$ of changing the symmetry isomorphism constraints of a symmetric monoidal category.

\begin{lemma}\label{lem:new-technical}
Let $H: \cC \to \mathrm{sVect}(F)$ be a $F$-linear $\otimes$-functor with values in the category of finite dimensional super $F$-vector spaces. If for every object $X \in \cC$, the K\"unneth projectors $\pi^\pm_X:H(X) \twoheadrightarrow H^\pm(X) \hookrightarrow H(X)$ can be written as $\pi_X^\pm=H(\underline{\pi}_X^\pm)$ with $\underline{\pi}_X^\pm \in \End_\cC(X)$, then the identity functor is a $\otimes$-equivalence
$$ \cC^\dagger/_{\!\!-\otimes \cO} \stackrel{\sim}{\too} (\cC/_{\!\!-\otimes \cO})^\dagger\,.$$ 
\end{lemma}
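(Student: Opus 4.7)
The plan is to verify that the claimed equivalence is an identity-on-objects functor whose effect on morphisms, composition, tensor product, and symmetry constraints is tautological to check. First I observe that both sides have the same underlying objects as $\cC$: the orbit category construction preserves objects by definition and the $(-)^\dagger$ operation of Proposition~\ref{prop:new} modifies only the symmetry isomorphism constraints. Hence the identity-on-objects assignment makes sense as a candidate $\otimes$-equivalence.

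Next I will check that hom-sets, composition, and the tensor product agree on both sides. The $(-)^\dagger$ operation leaves morphisms, composition, and the tensor product unchanged, so
\[
\Hom_{\cC^\dagger/_{\!\!-\otimes \cO}}(X,Y) \;=\; \bigoplus_{j \in \bbZ} \Hom_{\cC^\dagger}(X, Y\otimes \cO^{\otimes j}) \;=\; \bigoplus_{j \in \bbZ} \Hom_{\cC}(X, Y\otimes \cO^{\otimes j}),
\]
which coincides with $\Hom_{(\cC/_{\!\!-\otimes \cO})^\dagger}(X,Y)$ via \eqref{eq:comp}. The composition laws agree because in both cases they are induced from that of $\cC$, and the tensor product on objects and morphisms is likewise inherited from $\cC$ in both descriptions. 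So only the symmetry constraints need to be compared.

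The core of the proof is the comparison of symmetry isomorphisms. By Proposition~\ref{prop:new}, the symmetry of $\cC^\dagger$ is $c^\dagger_{X,Y} = c_{X,Y}\circ (e_X \otimes e_Y)$ with $e_X := 2\,\underline{\pi}^+_X - \id_X$. Pushed through the projection $\tau: \cC \to \cC/_{\!\!-\otimes \cO}$, this determines the symmetry of $\cC^\dagger/_{\!\!-\otimes \cO}$. On the other hand, to form $(\cC/_{\!\!-\otimes \cO})^\dagger$ one uses the endomorphisms $\tau(\underline{\pi}^\pm_X) \in \End_{\cC/_{\!\!-\otimes \cO}}(X)$; since $\tau$ is a symmetric monoidal functor, these remain idempotent, sum to the identity, and lift the K\"unneth projectors for the induced functor $H$ on the orbit category, so Proposition~\ref{prop:new} applies and yields $c^\dagger_{X,Y} = c_{X,Y}\circ(\tau(e_X)\otimes \tau(e_Y))$. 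The equality $\tau(e_X)\otimes \tau(e_Y) = \tau(e_X \otimes e_Y)$, which holds because $\tau$ is symmetric monoidal, then forces both symmetry constraints to coincide.

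The only subtlety I anticipate is the verification that the $(-)^\dagger$ construction is legitimately applicable to the orbit category, i.e., that the idempotents $\tau(\underline{\pi}^\pm_X)$ indeed qualify as lifts of K\"unneth projectors for a faithful super-fiber-type functor on $\cC/_{\!\!-\otimes \cO}$. This is a mild check: the symmetric monoidality of $\tau$ transports the required identities for $\underline{\pi}^\pm_X$ (idempotency, orthogonality, summing to the identity, compatibility with $H$) directly to $\tau(\underline{\pi}^\pm_X)$, so no additional hypothesis is needed beyond those already assumed. Once this is in place, the three steps above assemble into the claimed identity-on-objects $\otimes$-equivalence.
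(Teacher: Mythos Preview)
Your proposal is correct and follows essentially the same approach as the paper's proof: both arguments reduce to observing that the symmetry constraints of the orbit category are the $\tau$-images of those of $\cC$, that the modifying endomorphisms $e_X = 2\,\underline{\pi}^+_X - \id_X$ are likewise transported by $\tau$, and that $\tau$ is symmetric monoidal. Your version is simply more explicit about the bookkeeping (objects, hom-sets, composition, tensor product), which the paper omits; one small point is that $(\cC/_{\!\!-\otimes \cO})^\dagger$ is defined via the essentially surjective functor $P=\tau$ in Proposition~\ref{prop:new} rather than via an induced $H$ on the orbit category, so your final paragraph's framing is slightly off, though the computation it describes is the right one.
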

\begin{proof}
Recall from \cite[Lemma~7.3]{CvsNC} that the symmetry isomorphism constraints $c_{X,Y}$ of the category $\cC/_{\!\!-\otimes \cO}$ are the image of those of $\cC$ under the projection functor $\tau: \cC \to \cC/_{\!\!-\otimes \cO}$. Similarly, the endomorphisms $\underline{\pi}_X^+$ of $\cC/_{\!\!-\otimes \cO}$ are the image of those of $\cC$ under $\tau$. The proof now follows from the fact that the new symmetry isomorphism constraints are given by $c^\dagger_{X,Y}:= c_{X,Y} \circ (e_X \otimes e_Y)$, with $e_X=2 \cdot \underline{\pi}_X^+ - \id_X$, and from the fact that the projection functor $\tau$ is symmetric monoidal. 
\end{proof}


\begin{thebibliography}{00}

\bibitem{Andre-BourbakiSem} Y.~Andr\'e, {\em Motifs de dimension finie (d'apr\`es S.-I. Kimura, P. O'Sullivan. . . )}. Ast\'erisque {\bf 299} (2005). Exp. No. 929, viii, 115--145, S\'eminaire Bourbaki. Vol. 2003/2004.

\bibitem{Andre} \bysame, {\em Une introduction aux motifs (motifs purs, motifs mixtes, p{\'e}riodes)}. Panoramas et Synth{\`e}ses {\bf 17}. Soci{\'e}t{\'e} Math{\'e}matique de France, Paris, 2004.

\bibitem{AK} Y.~Andr{\'e} and B.~Kahn, {\em Nilpotence, radicaux et structures mono{\"i}dales}. (French) Rend.~Sem.~Mat. Univ.~Padova {\bf 108} (2002), 107--291.

\bibitem{AK-errata} \bysame, {\em Erratum: Nilpotence, radicaux et structures mono{\"i}dales}. (French) Rend.~Sem.~Mat. Univ.~Padova {\bf 108} (2002), 125-128.

\bibitem{Kapranov} A.~Bondal and M.~Kapranov, {\em Framed triangulated categories}. (Russian) Mat. Sb. {\bf 181} (1990), no. 5, 669--683; translation in Math. USSR-Sb. {\bf 70} (1991), no. 1, 93--107.  

\bibitem{BV} A.~Bondal and M.~Van den Bergh, {\em Generators and representability of functors in commutative and noncommutative geometry}. Mosc. Math. J. {\bf 3} (2003), no. 1, 1--36.

\bibitem{BO} A.~Bondal and D.~Orlov, {\em Derived categories of coherent sheaves}. International Congress of Mathematicians, Vol. II, Beijing, 2002, 47--56. 

\bibitem{BO1} \bysame, {\em Semiorthogonal decomposition for algebraic varieties}. Available at arXiv:alg-geom/9506012.

\bibitem{Bruguieres} A.~Brugui{\`e}res, {\em Tresses et structure enti{\`e}re sur la cat{\'e}gorie des repr{\'e}sentations de $SL_n$ quantique}. Comm. in Algebra {\bf 28} (2000), 1989--2028.

\bibitem{CT1} D.-C.~Cisinski and G.~Tabuada, {\em Symmetric monoidal structure on Non-commutative motives}. Available at arXiv:1001.0228v2. To appear in Journal of $K$-theory.

\bibitem{Deligne-Fest} P.~Deligne, {\em Cat{\'e}gories tannakiennes}. The Grothendieck Festschift, vol II, 111-195, Boston: Birkh{\"a}user.

\bibitem{Deligne} \bysame, {\em  Cat{\'e}gories tensorielles}. Moscow Math. Journal {\bf 2} (2002), no.~2, 227--248.

\bibitem{DelMil} P.~Deligne and J.S.~Milne, {\em Tannakian categories}. Hodge cycles,
motives, and Shimura varieties. Lecture notes in mathematics {\bf 900} (1982), 101--228. Updated online version available at \url{http://www.jmilne.org/math/xnotes/tc.pdf}.

\bibitem{Drinfeld} V.~Drinfeld, {\em DG quotients of DG categories}.
J. Algebra {\bf 272} (2004), 643--691.

\bibitem{Chitalk} \bysame, {\em DG categories}. University of chicago geometric Langlands seminar 2002. Notes available at \url{www.math.utexas.edu/users/benzvi/GRASP/lectures/Langlands.html}.

\bibitem{Emmanouil} I.~Emmanouil, {\em The K{\"u}nneth formula in periodic cyclic homology}.  K-Theory {\bf 10} (1996), no.~2, 197--214.

\bibitem{Ivorra} F.~Ivorra, {\em Finite dimensional motives and applications following Kimura, O'Sullivan and others}. Notes available at \url{http://www.uni-due.de/hm0035/SurveyIHES}.

\bibitem{Jannsen} U.~Jannsen, {\em Motives, numerical equivalence, and semi-simplicity}. Invent. Math. {\bf 107} (1992), no. 3, 447--452. 

\bibitem{Kaledin} D.~Kaledin, {\em Motivic structures in noncommutative geometry}. Available at arXiv:$1003.3210$. To appear in the Proceedings of the ICM 2010.

\bibitem{Kassel} C.~Kassel, {\em Cyclic homology, Comodules, and Mixed complexes}. Journal of Algebra {\bf 107} (1987), 195--216.

\bibitem{ICM} B.~Keller, {\em On differential graded
    categories}. International Congress of Mathematicians (Madrid), Vol.~II,
  151--190, Eur.~Math.~Soc., Z{\"u}rich, 2006. 

\bibitem{Ringed} \bysame, {\em On the cyclic homology of ringed spaces and schemes}. Doc. Math. {\bf 3} (1998), 231--259.  

\bibitem{IAS} M.~Kontsevich, {\em Noncommutative motives}.
Talk at the Institute for Advanced Study on the occasion of the $61^{\mathrm{st}}$ birthday of Pierre Deligne, October 2005.
Video available at \url{http://video.ias.edu/Geometry-and-Arithmetic}.    

\bibitem{ENS} \bysame, {\em Triangulated categories and geometry}. Course at the {\'E}cole Normale Sup{\'e}rieure, Paris, 1998. Notes available at \url{www.math.uchicago.edu/mitya/langlands.html}
    
\bibitem{Miami} \bysame, {\em Mixed noncommutative motives}. Talk at the Workshop on Homological Mirror Symmetry. University of Miami. 2010. Notes available at \url{www-math.mit.edu/auroux/frg/miami10-notes}.  

\bibitem{finMot} \bysame, {\em Notes on motives in finite characteristic}.  Algebra, arithmetic, and geometry: in honor of Yu. I. Manin. Vol. II,  213--247, Progr. Math., {\bf 270}, BirkhŠuser Boston, MA, 2009.  
    
\bibitem{Loday} J.-L.~Loday, {\em Cyclic homology}. Grundlehren der Mathematischen Wissenschaften {\bf 301} (1992). Springer-Verlag, Berlin.   

\bibitem{LO} V.~Lunts and D.~Orlov, {\em Uniqueness of enhancement for triangulated categories}. J. Amer. Math. Soc. {\bf 23} (2010), 853--908.

\bibitem{Semi} M.~Marcolli and G.~Tabuada, {\em Noncommutative motives, numerical equivalence, and semi-simplicity}. Available at arXiv:1105.2950.     

\bibitem{Konts} \bysame, {\em Kontsevich's category of noncommutative numerical motives}. Available at arXiv:1108.3785.   

\bibitem{Milne} J.S.~Milne, {\em Motives over finite fields}. Motives (Seattle, WA, 1991),  401--459, Proc. Sympos. Pure Math. {\bf 55}, Part 1, Amer. Math. Soc., Providence, RI, 1994. 

\bibitem{Milne2} \bysame, {\em Quotients of Tannakian categories}. Theory and
Applications of Categories {\bf 18} (2007), no.~21, 654--664.

\bibitem{Neeman} A.~Neeman, {\em Triangulated categories}. Annals of
  Mathematics Studies, {\bf 148}, Princeton University Press, 2001.

\bibitem{Survey} G.~Tabuada, {\em A guided tour through the garden of noncommutative motives}. Available at arXiv:1108.3787.

\bibitem{CvsNC} \bysame, {\em Chow motives versus noncommutative motives}. 
Available at arXiv:1103.0200. To appear in Journal of Noncommutative Geometry.

\bibitem{IMRN} \bysame, {\em Invariants additifs de dg-cat{\'e}gories}. Int.~Math.~Res.~Not. {\bf 53} (2005), 3309--3339.

\bibitem{Weibel} C.~Weibel, {\em Cyclic homology for schemes}. Proc. Amer. Math. Soc. {\bf 124} (1996), no. 6, 1655--1662.

\bibitem{Hodge} \bysame, {\em The Hodge filtration and cyclic homology}. $K$-theory {\bf 12} (1997), 145--164.

\end{thebibliography}
\end{document}